\newcommand{\ra}[1]{\renewcommand{\arraystretch}{#1}}
\titleformat{\chapter}[display]
{\normalfont\huge\bfseries}{\chaptertitlename\\thechapter}{20pt}{\Huge}
\titleformat{\subsubsection}[runin]
{\normalfont\normalsize\bfseries}{\thesubsubsection}{1em}{}
\titleformat{\paragraph}[runin]
{\normalfont\normalsize\bfseries}{\theparagraph}{1em}{}
\titleformat{\subparagraph}[runin]
{\normalfont\normalsize\bfseries}{\thesubparagraph}{1em}{}
\titlespacing*{\chapter} {0pt}{50pt}{40pt}
\titlespacing*{\section} {0pt}{3.5ex plus 1ex minus .2ex}{2.3ex plus .2ex}
\titlespacing*{\subsection} {0pt}{3.25ex plus 1ex minus .2ex}{1.5ex plus .2ex}
\titlespacing*{\subsubsection}{0pt}{3.25ex plus 1ex minus .2ex}{1.5ex plus .2ex}
\titlespacing*{\paragraph} {0pt}{3.25ex plus 1ex minus .2ex}{1em}
\titlespacing*{\subparagraph} {\parindent}{3.25ex plus 1ex minus .2ex}{1em}
\newcommand{\hs}{\hspace{-0.5pt}}
\def\xcirc{\hs\circ\hs}
\newtheorem{theorem}{Theorem}[section]
\newtheorem{proposition}[theorem]{Proposition}
\newtheorem{corollary}[theorem]{Corollary}
\theoremstyle{definition}
\theoremstyle{remark}
\newtheorem{remark}[theorem]{Remark}
\DeclareMathOperator{\Aut}{Aut}
\DeclareMathOperator{\siq}{SIQC}
\DeclareMathOperator{\CH}{CH}
\DeclareMathOperator{\ord}{ord}
\DeclareMathOperator{\J}{J}
\DeclareMathOperator{\Jac}{Jac}
\DeclareMathOperator{\ide}{id}
\DeclareMathOperator{\Ext}{Ext}
\DeclareMathOperator{\GL}{GL}
\DeclareMathOperator{\Supp}{Supp}
\DeclareMathOperator{\Z}{Z}
\DeclareMathOperator{\en}{en}
\DeclareMathOperator{\En}{En}
\DeclareMathOperator{\factors}{factors}
\DeclareMathOperator{\N}{N}
\DeclareMathOperator{\Ss}{S}
\DeclareMathOperator{\st}{st}
\DeclareMathOperator{\St}{St}
\DeclareMathOperator{\Cone}{Cone}
\DeclareMathOperator{\PE}{PE}
\DeclareMathOperator{\val}{val}
\DeclareMathOperator{\dir}{dir}
\DeclareMathOperator{\Dirsup}{Valsup}
\DeclareMathOperator{\Dirinf}{Valinf}
\DeclareMathOperator{\Succ}{Succ}
\DeclareMathOperator{\Pred}{Pred}
\DeclareMathOperator{\Dir}{Dir}
\DeclareMathOperator{\HH}{H}
\DeclareMathOperator{\linspan}{linspan}
\DeclareMathOperator{\lcm}{lcm}
\DeclareMathOperator{\End}{End}
\newcommand{\ov}{\overline}
\newcommand{\ot}{\otimes}
\newcommand{\wh}{\widehat}
\newcommand{\wt}{\widetilde}
\newcommand{\ep}{\epsilon}
\newcommand{\De}{\Delta}
\newcommand{\urho}{\underline{\rho}}
\newcommand{\usigma}{\underline{\sigma}}
\newcommand{\uDir}{\underline{\Dir}}
\newcommand{\BigFig}[1]{\parbox{12pt}{\Huge #1}}
\newcommand{\BigZero}{\BigFig{0}}
\newcommand{\dpu}{\mathbin{:}}
\renewcommand{\theequation}{\thesection.\arabic{equation}}
\begin{document}
\title[Intersection of two longest paths not a separator]{Minimal graph in which the intersection of two longest paths is not a separator}

\author[Juan Gutierrez]{Juan Gutierrez$^{1}$}
\address{$^1$ Departamento de Ciencia de la Computaci\'on
Universidad de Ingenier\'ia y Tecnolog\'ia (UTEC)}
\email{jgutierreza@utec.edu.pe}

\author[Christian Valqui]{Christian Valqui$^{2,}$$^3$}
\address{$^2$Pontificia Universidad Cat\'olica del Per\'u, Secci\'on Matem\'aticas, PUCP, Av. Universitaria 1801, San Miguel, Lima 32, Per\'u.}
\address{$^3$Instituto de Matem\'atica y Ciencias Afines (IMCA) Calle Los Bi\'ologos 245. Urb San C\'esar.
La Molina, Lima 12, Per\'u.}
\email{cvalqui@pucp.edu.pe}
\thanks{Christian Valqui was supported by PUCP-DGI-CAP-2020-818.}

\subjclass[2020]{primary 05C38; secondary 05C45}
\keywords{Hippchen's conjecture, three longest paths, traceable graph, intersection of longest paths}

\maketitle

\begin{abstract}
We prove that for a connected simple graph $G$ with $n\le 10$ vertices, and two longest paths
$C$ and $D$ in $G$, the intersection of vertex sets $V(C)\cap V(D)$ is a separator. This shows that the graph found previously with $n=11$, in which
the complement of the intersection of vertex sets $V(C)\cap V(D)$ of two longest paths is connected, is minimal.
\end{abstract}

\tableofcontents

\section{Introduction}

In~\cite{Gr} the following result was established. Let $k\in\{3,4,5\}$ and let $G$ be a 2-connected graph.
Suppose that $C$ and $D$ are two longest cycles of $G$ with $V(C)\ne V(D)$, meeting in a set $W$ of exactly
$k$ vertices. Then $W$ is a separator of $G$ (called an articulation set in~\cite{Gr}), which means that the
complement is not connected.   In~\cite{ST}  the
same result is proved for $k\in \{6,7 \}$. The result cannot be true for $k=8$, since the Petersen graph
has two longest cycles $C$ and $D$ of length 9 with $V(C)\ne V(D)$, such that the intersection has 8 vertices,
and $G\setminus(V(C)\cap V(D))$ is connected.

Using methods developed in order to approach the Hippchen conjecture, a path version of some of these results was proved recently
in~\cite{GV}*{Theorem 5.7 and Corollary 5.8}.
 Assume that $P$ and $Q$ are two longest paths in a simple graph $G$. If $V(Q)\ne V(P)$ and $V(P)\cap V(Q)$ has
cardinality $\ell \le 5$, then
it is a separator. Moreover,  if $V(Q)\ne V(P)$ and $n=|V(G)|\le 7$ then $V(Q)\cap V(P)$ is a separator. The following question was
raised in~\cite{GV}: Which are the maximal $\ell$ and $n$ such that the above results remain true?
Consider the following graph with 11 vertices, that has two longest path $P$ and $Q$ of length 9, which satisfy $V(Q)\ne V(P)$ and
moreover, the complement of $V(Q)\cap V(P)$ is connected. Since $\# (V(P)\cap V(Q))=9$ we have $n=11$ and $\ell=9$.

  \begin{tikzpicture}[scale=1]
  \draw(3,0.2) node {Simple graph, $n=11$ vertices};
\draw(6,2.3) node {};
\filldraw [black]  (0,3)    circle (2pt)
[black]  (2,0.8)    circle (2pt)
[black]  (1,3)    circle (2pt)
[black]  (2,2)    circle (2pt)
[black]  (2,3)    circle (2pt)
[black]  (3,2)    circle (2pt)
[black]  (4,2)    circle (2pt)
[black]  (4,3)    circle (2pt)
[black]  (4,0.8)    circle (2pt)
[black]  (5,3)    circle (2pt)
[black]  (6,3)    circle (2pt);
\draw[-] (0,3)--(1,3);
\draw[-] (1,3)--(2,3);
\draw[-] (2,0.8)--(1,3);
\draw[-] (2,0.8)--(4,2);
\draw[-] (2,2)--(2,3);
\draw[-] (2,2)--(4,2);
\draw[-,white,line width=2pt] (2.5,1.7)--(3.5,1.1);
\draw[-] (2,2)--(4,0.8);
\draw[-] (4,2)--(4,3);
\draw[-] (4,3)--(5,3);
\draw[-] (5,3)--(6,3);
\draw[-] (4,0.8)--(5,3);
\draw[-] (2,3)--(4,3);
\draw[-] (2,0.8)--(4,0.8);
\draw (2,3.5) node {};
\end{tikzpicture}
  \begin{tikzpicture}[scale=1]
  \draw(3,0.2) node {Two longest paths, $\ell=\#(V(P)\cap V(Q))=9$};
\draw(6,2.3) node {};
\draw[-] (0,3)--(1,3);
\draw[-] (1,3)--(2,3);
\draw[-] (2,0.8)--(1,3);
\draw[-] (2,0.8)--(4,2);
\draw[-] (2,2)--(2,3);
\draw[-] (2,2)--(4,2);
\draw[-] (4,2)--(4,3);
\draw[-] (4,3)--(5,3);
\draw[-] (5,3)--(6,3);
\draw[-] (4,0.8)--(5,3);
\draw[-] (2,3)--(4,3);
\draw[-] (2,0.8)--(4,0.8);
\draw[-,red] (0,2.95)--(1,2.95);
\draw[-,red] (2.03,0.83)--(1.03,3.03);
\draw[-,red] (2,2.05)--(4,2.05);
\draw[-,green] (2,0.85)--(4,2.05);
\draw[-,white,line width=3pt] (2.5,1.68)--(3.5,1.08);
\draw[-] (2,2)--(4,0.8);
\draw[-,red] (2,1.95)--(4,0.75);
\draw[-,red] (3.95,2)--(3.95,3);
\draw[-,red] (4,3.05)--(5,3.05);
\draw[-,red] (5,3.05)--(6,3.05);
\draw[-,red] (2,0.85)--(4,0.85);
\draw[-,green] (0,3.05)--(1,3.05);
\draw[-,green] (1,3.05)--(2,3.05);
\draw[-,green] (2,0.75)--(4,0.75);
\draw[-,green] (1.95,2)--(1.95,3);
\draw[-,green] (2,1.95)--(4,1.95);
\draw[-,green] (5,2.95)--(6,2.95);
\draw[-,green] (4.05,0.8)--(5.05,3);
\filldraw[red]  (4,3)    circle (2pt);
\filldraw [green]  (0,3)    circle (2pt)
[green]  (2,0.8)    circle (2pt)
[green]  (1,3)    circle (2pt)
[green]  (2,2)    circle (2pt)
[green]  (2,3)    circle (2pt)
[green]  (3,2)    circle (2pt)
[green]  (4,2)    circle (2pt)
[green]  (4,0.8)    circle (2pt)
[green]  (5,3)    circle (2pt)
[green]  (6,3)    circle (2pt);
\draw[red,fill=red] (0.05,3.05) arc (45:225:.07cm);
\draw[red,fill=red] (1.05,3.05) arc (45:225:.07cm);
\draw[red,fill=red] (2.05,0.85) arc (45:225:.07cm);
\draw[red,fill=red] (2.05,2.05) arc (45:225:.07cm);
\draw[red,fill=red] (3.05,2.05) arc (45:225:.07cm);
\draw[red,fill=red] (4.05,0.85) arc (45:225:.07cm);
\draw[red,fill=red] (4.05,2.05) arc (45:225:.07cm);
\draw[red,fill=red] (5.05,3.05) arc (45:225:.07cm);
\draw[red,fill=red] (6.05,3.05) arc (45:225:.07cm);
\draw (2,3.5) node {};
\end{tikzpicture}

\noindent This shows that $n_{max}\le 10$ and $\ell_{max}\le 8$, and combining this with the results of~\cite{GV},
we obtain that $7\le n_{max} \le 10$ and $5\le \ell_{max} \le 8$.

In this article we will prove that $n_{max}=10$, as we have announced in~\cite{GV}.
In order to prove that $n_{max}=10$ we assume that there exists a graph $G$ with $n=V(G)\in\{8,9,10\}$, such that
$V(Q)\ne V(P)$ and such that $G\setminus(V(P)\cap V(Q))$ is connected, and we will arrive at a contradiction.
If $n=\ell +2$, then $\# (V(P)\setminus V(Q))=1$, $\# (V(Q)\setminus V(P))=1$ and $V(G)=V(P)\cup V(Q)$. This yields three cases that
we will discard in section~\ref{seccion P' =1}:
\begin{itemize}
  \item $\ell=6$, $n=8$,
  \item $\ell=7$, $n=9$,
  \item $\ell=8$, $n=10$.
\end{itemize}

If $\# (V(P)\setminus V(Q))=1$ and $\# (V(Q)\setminus V(P))=1$ but $n> \ell+2$, then we write $\{p_0\}=(V(P)\setminus V(Q))$,
$\{q_0\}=(V(Q)\setminus V(P))$ and set $V_0=V(G)\setminus (V(P)\cup V(Q))$. We build a new graph $G_1$, deleting $V_0$ and
 all edges incident with vertices in $V_0$, and then adding a new edge connecting $p_0$ with $q_0$. In this new graph $P$
and $Q$ are still longest paths, $V(Q)\ne V(P)$ and $G_1\setminus(V(P)\cap V(Q))$ is connected, and we have the same
$\ell=|V(P)\cap V(Q)|$, but now we are in the case $n=\ell +2$.

Since $\# (V(P)\setminus V(Q))=\# (V(Q)\setminus V(P))$, the only remaining case is
\begin{itemize}
  \item $\ell=6$ and $n=10$, $\# (V(P)\setminus V(Q))=2$ and $\# (V(Q)\setminus V(P))=2$,
\end{itemize}
 which we will discard in section~\ref{seccion 6 mas 4}.

\section{The case $n=\ell +2$}
\label{seccion P' =1}
In this section we will we assume that there exists a graph $G$ with $n=V(G)\in\{8,9,10\}$ and two longest paths
$P$ and $Q$ with $\ell=\#(V(P)\cap V(Q))=n-2$, such that
$V(Q)\ne V(P)$ and $G\setminus(V(P)\cap V(Q))$ is connected.

We set $P'=V(P)\setminus V(Q)$ and $Q'=V(Q)\setminus V(P)$.
In this case we know already that $\# P'=1$, $\# Q'=1$ and $V(G)=V(P)\cup V(Q)$. We write $V(P)=\{p_1,\dots,p_{n-1}\}$, and assume that
in the path $P$ the vertices $p_i$ and $p_{i+1}$ are consecutive. Clearly $P'\ne \{p_1\}$ and $P'\ne \{p_{n-1}\}$. Otherwise, since there is an edge
from $P'$ to $Q'$, we could expand the path $P$ to a Hamiltonian path.

\begin{remark}\label{remark distancia minima}
Write $V(Q)\setminus V(P)=\{q\}$. By the same (symmetric) argument as above $q$ cannot be
and endpoint of $Q$. Note also that $q$ cannot connect directly with $p_1$ or $p_{n-1}$.
If $P'=\{p_i\}$, then by assumption there is an edge connecting $q$ with $p_i$. Since $q$ is not an endpoint of $Q$,
there are two vertices $p_j,p_k$ in $V(Q)\cap V(P)$ such that $q$ connects directly to them, and we can and will assume that $j<k$.
Note that
$$
|i-j|,|k-i|,|j-k|>1.
$$
In fact, if $|j-i|=1$, then we can replace the edge connecting $p_i$ with $p_j$ by the path $p_i q p_j$ in the path $P$ and obtain a longer path.
Similarly, if $|k-i|=1$ then we can replace the edge connecting $p_i$ with $p_k$ in $P$ by the path $p_i q p_k$, and obtain a longer path, and
if $|k-j|=1$ then we can replace the edge connecting $p_k$ with $p_j$ in $P$ by the path $p_k q p_j$, and obtain a longer path.
\end{remark}

\begin{proposition}\label{prop no 2 ni n-2}
  We can assume that $P'\ne \{p_2\}$ (and by symmetry $P'\ne \{p_{n-2}\}$).
\end{proposition}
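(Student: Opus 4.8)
The plan is to argue by contradiction: assume $P'=\{p_2\}$, and produce from the pair $(P,Q)$ a new longest path $\wt P$ with $V(\wt P)=V(P)$ in which the unique vertex of $V(\wt P)\setminus V(Q)$ sits at an interior position, so that after relabelling $\wt P$ as $p_1,\dots,p_{n-1}$ we are in the case $P'=\{p_m\}$ with $3\le m\le n-3$. Since $V(\wt P)=V(P)$, the pair $(\wt P,Q)$ inherits all the standing hypotheses (both paths longest, $V(\wt P)\ne V(Q)$, complement of the intersection connected), so this is a legitimate reduction. Every contradiction along the way will be produced by exhibiting a Hamiltonian path, which is impossible because here a longest path has only $\ell+1=n-1$ vertices. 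Recall from the Remark that, with $i=2$, there is an edge $q\,p_2$, while $q\not\sim p_1$.

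First I would locate the endpoint $p_1$ inside $Q$. Since $p_2\sim p_1$ and $p_2\notin V(Q)$, if $p_1$ were an endpoint of $Q$ we could prepend $p_2$ and obtain a path on all $n$ vertices; hence $p_1$ is \emph{interior} to $Q$ and therefore has two neighbours in $Q$. As $p_1\not\sim q$ and $p_2\notin V(Q)$, both of these neighbours lie in $\{p_3,p_4,\dots,p_{n-1}\}$. Next I would rule out $p_1\sim p_3$: otherwise the path $q\,p_2\,p_1\,p_3\,p_4\cdots p_{n-1}$, built from the edges $q p_2$, $p_2 p_1$, $p_1 p_3$ and the tail of $P$, visits every vertex, again a Hamiltonian path. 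Thus both $Q$-neighbours of $p_1$ lie in $\{p_4,\dots,p_{n-1}\}$.

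Since these two neighbours are distinct, the larger of their indices is some $m\ge 5$, and $p_1\sim p_m$ is a genuine edge of $G$. I would then rotate $P$ along this edge: $\wt P=p_{m-1}\,p_{m-2}\cdots p_2\,p_1\,p_m\,p_{m+1}\cdots p_{n-1}$ is a path on the same vertex set $V(P)$, hence a longest path, and in it $p_2$ occupies position $m-2$. Because $5\le m\le n-1$ we have $3\le m-2\le n-3$, so the special vertex is now strictly interior, in particular in neither position $2$ nor $n-2$; this is the desired reduction. The complementary statement $P'\ne\{p_{n-2}\}$ then follows by applying the same argument after reversing $P$ (equivalently, run it at the endpoint $p_{n-1}$).

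The step I expect to be the crux is guaranteeing that $p_1$ really has a neighbour $p_m$ with $m\ge 5$ lying in $V(Q)$. A priori $p_1$ could have its only neighbours into $V(Q)$ at $p_3$ and $p_4$, and a rotation along $p_1 p_4$ would leave $p_2$ back in position $2$, achieving nothing. It is precisely the combination of the two preliminary facts, namely that $p_1$ is interior to $Q$ (so it has \emph{two} neighbours there, not one) together with $p_1\not\sim p_3$, that forces one neighbour of index $\ge 5$ and makes the rotation effective; verifying that each auxiliary path exhibited above is genuinely Hamiltonian (using $p_2\notin V(Q)$ and $q\notin V(P)$) is the routine part.
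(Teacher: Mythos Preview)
Your proof is correct and uses the same lollipop-rotation idea as the paper, but pivots at a different vertex. The paper works on the $Q'$ side: it takes the vertex $q$, uses Remark~\ref{remark distancia minima} to get a $Q$-neighbour $p_k$ of $q$ with $k\ge 6$, and sets $\wt P=p_{k-1}\cdots p_3 p_2\, q\, p_k\cdots p_{n-1}$; here $V(\wt P)$ swaps $p_1$ for $q$, so one must separately check that the new complement $\{p_2,p_1\}$ is connected (it is, via the $P$-edge $p_1p_2$). You instead work on the $P$ side: you show $p_1$ is interior to $Q$, rule out $p_1\sim p_3$ by exhibiting a Hamiltonian path, and rotate along an edge $p_1p_m$ with $m\ge 5$, obtaining $\wt P$ with $V(\wt P)=V(P)$. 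Your route has the minor advantage that keeping the vertex set fixed makes the verification that $(\wt P,Q)$ inherits all hypotheses immediate; the paper's route has the minor advantage that the needed inequalities on $k$ come straight from the already-stated Remark rather than from a fresh argument about $p_1$. The two arguments are essentially dual and equally valid.
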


\begin{proof}
  Assume that $V(P)\setminus V(Q)=P'=\{p_2\}$. We will use the lollipop method in order to replace the path $P$
  with another longest path $\widetilde P$ with $V(\widetilde P)\ne V(Q)$ such that $G\setminus(V(P)\cap V(Q))$ is connected, and such that
  $V(\widetilde{P})\setminus V(Q)\ne \{\tilde p_2\},\{\tilde p_{n-2}\}$.

   By Remark~\ref{remark distancia minima} the vertex $q$ connects directly with
  vertices $p_j$ and $p_k$. We can assume $j<k$ and then, again by Remark~\ref{remark distancia minima}, we have $5<k<n-1$. Set
  $$
  \widetilde P:=p_{k-1}p_{k-2}\dots p_{3} p_2 q p_k p_{k+1}\dots p_{n-2}p_{n-1}.
  $$

\noindent  \begin{tikzpicture}[scale=0.68]
  \draw(4.5,-0.2) node {$q$ connects with $p_j$ and $p_k$};
\draw(6,2.3) node {};
\draw[-] (0,1)--(1,3);
\draw[-] (1,3)--(2,1);
\draw[-] (2,1)--(3,1);
\draw[-] (3,1)--(4,1);
\draw[-] (5,1)--(6,1);
\draw[-] (6,1)--(7,1);
\draw[-] (8,1)--(9,1);
\draw[-,red] (3,1)--(4.5,3);
\draw[-,red] (4.5,3)--(6,1);
\draw[-,dotted] (1,3)..controls(2.75,3.5)..(4.5,3);
\filldraw[red]  (4.5,3)    circle (2pt);
\filldraw [black]  (0,1)    circle (2pt)
[black]  (1,3)    circle (2pt)
[black]  (2,1)    circle (2pt)
[black]  (3,1)    circle (2pt)
[black]  (4,1)    circle (2pt)
[black]  (5,1)    circle (2pt)
[black]  (6,1)    circle (2pt)
[black]  (7,1)    circle (2pt)
[black]  (8,1)    circle (2pt)
[black]  (9,1)    circle (2pt);
\draw (0,0.6) node {$p_1$};
\draw (1,3.4) node {$p_2$};
\draw (2,0.6) node {$p_3$};
\draw (3,0.6) node {$p_j$};
\draw (5,0.6) node {$p_{k-1}$};
\draw (6,0.6) node {$p_k$};
\draw (9,0.6) node {$p_{n-1}$};
\draw (4.5,3.4) node {$q$};
\draw (4.5,1) node {$\dots$};
\draw (7.5,1) node {$\dots$};
\draw (10.5,1) node {};
\end{tikzpicture}
\noindent  \begin{tikzpicture}[scale=0.68]
  \draw(4.5,-0.3) node {The path $\widetilde P$ in blue};
\draw(6,2.3) node {};
\draw[-] (0,1)--(1,3);
\draw[-,line width=2pt,cyan] (1,3)--(2,1);
\draw[-,line width=2pt,cyan] (2,1)--(3,1);
\draw[-,line width=2pt,cyan] (3,1)--(4,1);
\draw[-,line width=2pt,cyan] (4,1)--(5,1);
\draw[-] (5,1)--(6,1);
\draw[-,line width=2pt,cyan] (6,1)--(7,1);
\draw[-,line width=2pt,cyan] (7,1)--(8,1);
\draw[-,line width=2pt,cyan] (8,1)--(9,1);
\draw[-,red] (3,1)--(4.5,3);
\draw[-,line width=2pt,cyan] (4.5,3)--(6,1);
\draw[-,line width=2pt,cyan] (1,3)..controls(2.75,3.5)..(4.5,3);
\filldraw[red]  (4.5,3)    circle (2pt);
\filldraw [black]  (0,1)    circle (2pt)
[black]  (1,3)    circle (2pt)
[black]  (2,1)    circle (2pt)
[black]  (3,1)    circle (2pt)
[black]  (4,1)    circle (2pt)
[black]  (6,1)    circle (2pt)
[black]  (7,1)    circle (2pt)
[black]  (8,1)    circle (2pt);
\filldraw[cyan]  (9,1)    circle (2.5pt)
[cyan]  (5,1)    circle (2.5pt);
\draw (0,0.6) node {$p_1$};
\draw (1,3.4) node {$p_2$};
\draw (2,0.6) node {$p_3$};
\draw (3,0.6) node {$p_j$};
\draw (5,0.6) node {$p_{k-1}$};
\draw (6,0.6) node {$p_k$};
\draw (9,0.6) node {$p_{n-1}$};
\draw (4.5,3.4) node {$q$};
\draw (4.5,1) node {$\dots$};
\draw (7.5,1) node {$\dots$};
\end{tikzpicture}

  Then $\widetilde P$ is also a longest path with $n-1$ vertices, $V(\widetilde{P})\setminus V(Q)=\{p_2\}$ is connected with
  $\{p_1\}=V(Q)\setminus V(\widetilde P)$, and so $G\setminus (V(\widetilde P)\cap V(Q)$ is connected. Moreover,
  since $\tilde p_1=p_{k-1}$, $\tilde p_2=p_{k-2}$ and $k>5$ we have $k-2>3>2$, and so
  $$
  V(\widetilde{P})\setminus V(Q)=\{p_2\}\ne\{p_{k-2}\}= \{\tilde p_2\}.
  $$
  Since $n>4$ we have $2<n-2$, and so
  $$
  V(\widetilde{P})\setminus V(Q)=\{p_2\}\ne\{p_{n-2}\}= \{\tilde p_{n-2}\},
  $$
  which concludes the proof.
\end{proof}

There are some pairs of vertices of $P$ that cannot be connected by an edge of $Q$ without generating a Hamiltonian path.
The following proposition collects some of the forbidden pairs in $\{p_1,p_{i-1},p_{i+1},p_{j-1},p_{j+1},p_{k-1},p_{k+1},p_{n-1}\}$

\begin{proposition} \label{forbidden pairs}
  The following pairs cannot be connected by an edge in $Q$.
  \begin{enumerate}
    \item $(p_{i-1},p_{i+1})$, $(p_1,p_{n-1})$, $(p_1,p_{i+1})$, $(p_{i-1},p_{n-1})$.
    \item For $x\in\{j,k\}$, the pairs $(p_{1},p_{x+1})$, $(p_{x-1},p_{n-1})$, $(p_{i-1},p_{x-1})$, $(p_{i+1},p_{x+1})$.
    \item For $x\in\{j,k\}$ with $x>i$, the pair $(p_1,p_{x-1})$ is forbidden, and for $x\in\{j,k\}$ with $x<i$, the pair $(p_{x+1},p_{n-1})$
    is forbidden.
    \item If $j<i$, the pair $(p_1,p_{i-1})$ is forbidden, and if $k>i$, the pair $(p_{i+1},p_{n-1})$ is forbidden.
  \end{enumerate}
\end{proposition}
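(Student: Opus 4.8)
The plan is to treat each pair separately and, assuming the edge in question is an edge of $Q$ (hence of $G$), to exhibit a single path visiting all $n$ vertices $p_1,\dots,p_{n-1},q$. Since $P$ and $Q$ are longest paths with only $n-1$ vertices, such a Hamiltonian path is the desired contradiction. Throughout, the edges we are allowed to use are the edges $p_ap_{a+1}$ of $P$, the three edges $qp_i$, $qp_j$, $qp_k$ (the first coming from Remark~\ref{remark distancia minima}, the other two being the two neighbours of $q$ in $Q$), and the hypothetical edge. Remark~\ref{remark distancia minima} together with Proposition~\ref{prop no 2 ni n-2} guarantee $i,j,k\notin\{1,n-1\}$, $i\notin\{2,n-2\}$ and $|i-j|,|i-k|,|j-k|>1$, which is precisely what keeps all the index ranges below nondegenerate.

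Two construction templates cover almost everything. Template A turns one of the neighbours $p_i,p_j,p_k$ of $q$ into an endpoint of a Hamiltonian path on $\{p_1,\dots,p_{n-1}\}$ and then appends $q$. For $(p_1,p_{x+1})$ with $x\in\{j,k\}$, the rotation $p_x p_{x-1}\cdots p_1 p_{x+1}p_{x+2}\cdots p_{n-1}$ (legitimate thanks to the chord $p_1p_{x+1}$) has $p_x$ as an endpoint, and since $qp_x$ is an edge we may prepend $q$; the pair $(p_{x-1},p_{n-1})$ is the mirror rotation from the $p_{n-1}$ end, while $(p_1,p_{i+1})$ and $(p_{i-1},p_{n-1})$ are the same constructions with $p_i$ and the edge $qp_i$ in place of $p_x$, and $(p_1,p_{n-1})$ is handled by first closing $P$ into a Hamiltonian cycle on the $p$'s and cutting it open at $p_i$ to use $qp_i$. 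Template B inserts $q$ between two of its neighbours after reversing a block of $P$. For $(p_{i-1},p_{x-1})$ with $i<x$, the path $p_1\cdots p_{i-1}\,p_{x-1}p_{x-2}\cdots p_i\,q\,p_x p_{x+1}\cdots p_{n-1}$ uses the chord $p_{i-1}p_{x-1}$, a reversed block of $P$, and the edges $p_iq$ and $qp_x$; the pair $(p_{i+1},p_{x+1})$ and the pairs of items (3) and (4) follow the same pattern, inserting $q$ either between $p_i$ and $p_x$ or between $p_i$ and $p_j$ (respectively $p_k$), the side conditions $x>i$, $x<i$, $j<i$, $k>i$ being exactly what makes the relevant reversed block run in the correct direction.

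The one genuinely different case is $(p_{i-1},p_{i+1})$, and this is where the hypothesis ``edge of $Q$'' (rather than merely of $G$) is used: if $p_{i-1}p_{i+1}$ is consecutive in $Q$, then, since $p_i$ is adjacent to both $p_{i-1}$ and $p_{i+1}$ in $P$ and $p_i\notin V(Q)$, we may replace that single edge of $Q$ by the two edges $p_{i-1}p_i$ and $p_ip_{i+1}$, producing a path on all $n$ vertices. Here no chord of $P$ suffices, because a single chord through the interior of $P$ can neither make a neighbour of $q$ into an endpoint nor make two neighbours of $q$ adjacent.

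I expect the main obstacle to be bookkeeping rather than ideas: one must check, case by case, that the displayed vertex sequences are genuine paths (every consecutive pair is a legitimate edge) and that their index blocks, together with $q$, partition $\{p_1,\dots,p_{n-1},q\}$ without overlap. The spacing inequalities $|i-j|,|i-k|,|j-k|>1$ and the exclusions $i,j,k\notin\{1,2,n-2,n-1\}$ from Remark~\ref{remark distancia minima} and Proposition~\ref{prop no 2 ni n-2} are invoked repeatedly to guarantee, for instance, that $p_{x-1}\ne p_i$, that reversed blocks are nonempty, and that the two endpoints of a hypothetical chord are distinct. Once the relative order of $i$, $j$ and $k$ is fixed, each case reduces to one of the two templates above, so the proposition follows.
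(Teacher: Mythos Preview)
Your proposal is correct and follows essentially the same approach as the paper: for each forbidden pair you exhibit a Hamiltonian path built from edges of $P$, the edges $qp_i,qp_j,qp_k$, and the hypothetical chord, with the single exception $(p_{i-1},p_{i+1})$ handled by inserting $p_i$ into $Q$. The paper carries this out case by case via explicit diagrams, while you organize the same constructions into two templates (rotation from an endpoint, and block reversal to place $q$ between two of its neighbours); the underlying paths coincide, and your remark that the spacing inequalities keep the reversed blocks nondegenerate is exactly what the paper uses implicitly.
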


\begin{proof}
  \noindent (1). There cannot be an edge of $Q$ connecting $p_{i-1}$ with $p_{i+1}$, since then we could replace that edge
  with the path $p_{i-1}p_i p_{i+1}$ in $Q$ and obtain a longer path. The Hamiltonian paths in each of following three diagrams show that
  none of the pairs $(p_1,p_{i+1})$, $(p_1,p_{n-1})$, $(p_{i-1},p_{n-1})$ are allowed.

\noindent
\begin{tikzpicture}[scale=0.45]
\draw(6,2.3) node {};
  \draw(4,-0.5) node {$Q$ can't connect $p_1$ with $p_{i+1}$};
\draw[-,line width=2pt,cyan] (0,1)--(1,1);
\draw[-,line width=2pt,cyan] (1,1)--(2,1);
\draw[-,line width=2pt,cyan] (2,1)--(3,2);
\draw[-] (3,2)--(4,1);
\draw[-,line width=2pt,cyan] (4,1)--(5,1);
\draw[-,line width=2pt,cyan] (5,1)--(6,1);
\draw[-,line width=2pt,cyan] (6,1)--(7,1);
\draw[-,line width=2pt,cyan] (7,1)--(8,1);
\draw[-,line width=2pt,cyan] (3,2)--(6,2);
\draw[-,line width=2pt,cyan] (0,1)..controls(-0.2,-0.2)and(4,0)..(4,1);
\filldraw[red]  (6,2)    circle (2pt);
\filldraw [black]  (0,1)    circle (2pt)
[black]  (1,1)    circle (2pt)
[black]  (2,1)    circle (2pt)
[black]  (3,2)    circle (2pt)
[black]  (4,1)    circle (2pt)
[black]  (5,1)    circle (2pt)
[black]  (6,1)    circle (2pt)
[black]  (7,1)    circle (2pt)
[black]  (8,1)    circle (2pt);
\draw (-0.3,0.7) node {$p_1$};
\draw (2.2,0.6) node {$p_{i-1}$};
\draw (2.6,2.2) node {$p_i$};
\draw (4.6,0.6) node {$p_{i+1}$};
\draw (8,0.6) node {$p_{n-1}$};
\draw (6.2,2.2) node {$q$};
\draw (8.5,3) node {};
\end{tikzpicture}
\begin{tikzpicture}[scale=0.45]
\draw(6,2.3) node {};
\draw(4.2,-0.5) node {$Q$ can't connect $p_1$ with $p_{n-1}$};
\draw[-,line width=2pt,cyan] (0,1)--(1,1);
\draw[-,line width=2pt,cyan] (1,1)--(2,1);
\draw[-,line width=2pt,cyan] (2,1)--(3,2);
\draw[-] (3,2)--(4,1);
\draw[-,line width=2pt,cyan] (4,1)--(5,1);
\draw[-,line width=2pt,cyan] (5,1)--(6,1);
\draw[-,line width=2pt,cyan] (6,1)--(7,1);
\draw[-,line width=2pt,cyan] (7,1)--(8,1);
\draw[-,line width=2pt,cyan] (3,2)--(6,2);
\draw[-,line width=2pt,cyan] (0,1)..controls(0,-0.2)and(8.3,-0.3)..(8,1);
\filldraw[red]  (6,2)    circle (2pt);
\filldraw [black]  (0,1)    circle (2pt)
[black]  (1,1)    circle (2pt)
[black]  (2,1)    circle (2pt)
[black]  (3,2)    circle (2pt)
[black]  (4,1)    circle (2pt)
[black]  (5,1)    circle (2pt)
[black]  (6,1)    circle (2pt)
[black]  (7,1)    circle (2pt)
[black]  (8,1)    circle (2pt);
\draw (-0.3,0.7) node {$p_1$};
\draw (2.2,0.6) node {$p_{i-1}$};
\draw (2.6,2.2) node {$p_i$};
\draw (4.5,0.6) node {$p_{i+1}$};
\draw (8.4,1.4) node {$p_{n-1}$};
\draw (6.2,2.2) node {$q$};
\draw (8.5,3) node {};
\end{tikzpicture}
\begin{tikzpicture}[scale=0.45]
\draw(6,2.3) node {};
\draw(4,-0.5) node {$Q$ can't connect $p_{i-1}$ with $p_{n-1}$};
\draw[-,line width=2pt,cyan] (0,1)--(1,1);
\draw[-,line width=2pt,cyan] (1,1)--(2,1);
\draw[-] (2,1)--(3,2);
\draw[-,line width=2pt,cyan] (3,2)--(4,1);
\draw[-,line width=2pt,cyan] (4,1)--(5,1);
\draw[-,line width=2pt,cyan] (5,1)--(6,1);
\draw[-,line width=2pt,cyan] (6,1)--(7,1);
\draw[-,line width=2pt,cyan] (7,1)--(8,1);
\draw[-,line width=2pt,cyan] (3,2)--(6,2);
\draw[-,line width=2pt,cyan] (2,1)..controls(2,-0.2)and(8.3,-0.3)..(8,1);
\filldraw[red]  (6,2)    circle (2pt);
\filldraw [black]  (0,1)    circle (2pt)
[black]  (1,1)    circle (2pt)
[black]  (2,1)    circle (2pt)
[black]  (3,2)    circle (2pt)
[black]  (4,1)    circle (2pt)
[black]  (5,1)    circle (2pt)
[black]  (6,1)    circle (2pt)
[black]  (7,1)    circle (2pt)
[black]  (8,1)    circle (2pt);
\draw (0,0.6) node {$p_1$};
\draw (2.9,1) node {$p_{i-1}$};
\draw (2.6,2.2) node {$p_i$};
\draw (4.6,0.6) node {$p_{i+1}$};
\draw (8.4,1.4) node {$p_{n-1}$};
\draw (6.2,2.2) node {$q$};
\draw (8.5,3) node {};
\end{tikzpicture}

  \noindent (2). The path $q p_x p_{x-1} p_{x-2}\dots p_2 p_1 p_{x+1} p_{x+2}\dots p_{n-1}$ shows that $(p_{1},p_{x+1})$ is forbidden, and
  the path $p_1 p_2 \dots p_{x-2}p_{x-1}p_{n-1} p_{n-2}\dots p_{x+2} p_{x+1} p_x q$ discards the pair $(p_{x-1},p_{n-1})$.

  If $x<i$, then the Hamiltonian paths in the following two diagrams show that $(p_{x-1},p_{i-1})$, $(p_{x+1},p_{i+1})$ are not allowed in that case,

\noindent
\begin{tikzpicture}[scale=0.5]
\draw(6,2.3) node {};
  \draw(4,-0.5) node {$Q$ can't connect $p_{x-1}$ with $p_{i-1}$};
\draw[-,line width=2pt,cyan] (0,1)--(2,1);
\draw[-,line width=2pt,cyan] (3,1)--(5,1);
\draw[-,line width=2pt,cyan] (3,1)--(3,2);
\draw[-] (2,1)--(3,1);
\draw[-] (5,1)--(6,2);
\draw[-,line width=2pt,cyan] (6,2)--(7,1);
\draw[-,line width=2pt,cyan] (7,1)--(9,1);
\draw[-,line width=2pt,cyan] (7,1)--(8,1);
\draw[-,line width=2pt,cyan] (3,2)--(6,2);
\draw[-,line width=2pt,cyan] (2,1)..controls(1.8,-0.2)and(5,0)..(5,1);
\filldraw[red]  (3,2)    circle (2pt);
\filldraw [black]  (0,1)    circle (2pt)
[black]  (2,1)    circle (2pt)
[black]  (3,1)    circle (2pt)
[black]  (5,1)    circle (2pt)
[black]  (6,2)    circle (2pt)
[black]  (7,1)    circle (2pt)
[black]  (9,1)    circle (2pt);
\draw (0,0.6) node {$p_1$};
\draw (2,1.4) node {$p_{x-1}$};
\draw (3.2,0.6) node {$p_{x}$};
\draw (5.6,0.6) node {$p_{i-1}$};
\draw (6.3,2.3) node {$p_i$};
\draw (7.2,0.6) node {$p_{i+1}$};
\draw (9,0.6) node {$p_{n-1}$};
\draw (2.7,2.2) node {$q$};
\draw (8.5,3) node {};
\end{tikzpicture}
\begin{tikzpicture}[scale=0.5]
\draw(6,2.3) node {};
  \draw(4,-0.5) node {$Q$ can't connect $p_{x+1}$ with $p_{i+1}$};
\draw[-,line width=2pt,cyan] (0,1)--(2,1);
\draw[-,line width=2pt,cyan] (3,1)--(5,1);
\draw[-,line width=2pt,cyan] (2,1)--(2,2);
\draw[-] (2,1)--(3,1);
\draw[-] (6,2)--(7,1);
\draw[-,line width=2pt,cyan] (5,1)--(6,2);
\draw[-,line width=2pt,cyan] (7,1)--(9,1);
\draw[-,line width=2pt,cyan] (7,1)--(8,1);
\draw[-,line width=2pt,cyan] (2,2)--(6,2);
\draw[-,line width=2pt,cyan] (3,1)..controls(2.8,-0.2)and(7,0)..(7,1);
\filldraw[red]  (2,2)    circle (2pt);
\filldraw [black]  (0,1)    circle (2pt)
[black]  (2,1)    circle (2pt)
[black]  (3,1)    circle (2pt)
[black]  (5,1)    circle (2pt)
[black]  (6,2)    circle (2pt)
[black]  (7,1)    circle (2pt)
[black]  (9,1)    circle (2pt);
\draw (0,0.6) node {$p_1$};
\draw (2,0.6) node {$p_{x}$};
\draw (3.2,1.4) node {$p_{x+1}$};
\draw (5.6,0.6) node {$p_{i-1}$};
\draw (6.3,2.3) node {$p_i$};
\draw (7.6,0.6) node {$p_{i+1}$};
\draw (9.3,1.4) node {$p_{n-1}$};
\draw (1.7,2.2) node {$q$};
\draw (8.5,3) node {};
\end{tikzpicture}

\noindent  and the Hamiltonian paths in the following two diagrams show that $(p_{i-1},p_{x-1})$, $(p_{i+1},p_{x+1})$ are impossible if $x>i$,
  which concludes item~(2).

  \noindent
  \begin{tikzpicture}[scale=0.5]
\draw(6,2.3) node {};
\draw(4,-0.5) node {$Q$ can't connect $p_{i-1}$ with $p_{x-1}$};
\draw[-,line width=2pt,cyan] (0,1)--(1,1);
\draw[-,line width=2pt,cyan] (1,1)--(2,1);
\draw[-] (2,1)--(3,2);
\draw[-,line width=2pt,cyan] (3,2)--(4,1);
\draw[-,line width=2pt,cyan] (4,1)--(5,1);
\draw[-,line width=2pt,cyan] (5,1)--(6,1);
\draw[-] (6,1)--(7,1);
\draw[-,line width=2pt,cyan] (7,1)--(9,1);
\draw[-,line width=2pt,cyan] (7,2)--(7,1);
\draw[-,line width=2pt,cyan] (3,2)--(7,2);
\draw[-,line width=2pt,cyan] (2,1)..controls(2,-0.2)and(6.3,-0.3)..(6,1);
\filldraw[red]  (7,2)    circle (2pt);
\filldraw [black]  (0,1)    circle (2pt)
[black]  (1,1)    circle (2pt)
[black]  (2,1)    circle (2pt)
[black]  (3,2)    circle (2pt)
[black]  (4,1)    circle (2pt)
[black]  (6,1)    circle (2pt)
[black]  (7,1)    circle (2pt)
[black]  (9,1)    circle (2pt);
\draw (0,0.6) node {$p_1$};
\draw (2.9,1) node {$p_{i-1}$};
\draw (2.6,2.2) node {$p_i$};
\draw (4.4,0.6) node {$p_{i+1}$};
\draw (6.1,1.4) node {$p_{x-1}$};
\draw (7.2,0.6) node {$p_x$};
\draw (9,0.6) node {$p_{n-1}$};
\draw (7.2,2.2) node {$q$};
\draw (8.5,3) node {};
\end{tikzpicture}
  \begin{tikzpicture}[scale=0.5]
\draw(6,2.3) node {};
\draw(4,-0.5) node {$Q$ can't connect $p_{i+1}$ with $p_{x+1}$};
\draw[-,line width=2pt,cyan] (0,1)--(1,1);
\draw[-,line width=2pt,cyan] (1,1)--(2,1);
\draw[-] (4,1)--(3,2);
\draw[-,line width=2pt,cyan] (3,2)--(2,1);
\draw[-,line width=2pt,cyan] (4,1)--(5,1);
\draw[-,line width=2pt,cyan] (5,1)--(6,1);
\draw[-] (6,1)--(7,1);
\draw[-,line width=2pt,cyan] (7,1)--(9,1);
\draw[-,line width=2pt,cyan] (6,2)--(6,1);
\draw[-,line width=2pt,cyan] (3,2)--(6,2);
\draw[-,line width=2pt,cyan] (4,1)..controls(4,-0.2)and(7.3,-0.3)..(7,1);
\filldraw[red]  (6,2)    circle (2pt);
\filldraw [black]  (0,1)    circle (2pt)
[black]  (1,1)    circle (2pt)
[black]  (2,1)    circle (2pt)
[black]  (3,2)    circle (2pt)
[black]  (4,1)    circle (2pt)
[black]  (6,1)    circle (2pt)
[black]  (7,1)    circle (2pt)
[black]  (9,1)    circle (2pt);
\draw (0,0.6) node {$p_1$};
\draw (2.3,0.6) node {$p_{i-1}$};
\draw (2.6,2.2) node {$p_i$};
\draw (4.6,1.4) node {$p_{i+1}$};
\draw (6.1,0.6) node {$p_{x}$};
\draw (7.2,1.4) node {$p_{x+1}$};
\draw (9,0.6) node {$p_{n-1}$};
\draw (6.2,2.2) node {$q$};
\draw (8.5,3) node {};
\end{tikzpicture}

  \noindent (3) The Hamiltonian paths in the following two diagrams show that for
  $x\in\{j,k\}$ with $x>i$, the pair $(p_1,p_{x-1})$ is forbidden, and for $x\in\{j,k\}$ with $x<i$, the pair $(p_{x+1},p_{n-1})$
    is forbidden.

  \noindent
  \begin{tikzpicture}[scale=0.5]
\draw(6,2.3) node {};
\draw(4,-0.5) node {$Q$ can't connect $p_{1}$ with $p_{x-1}$};
\draw[-,line width=2pt,cyan] (0,1)--(1,1);
\draw[-,line width=2pt,cyan] (1,1)--(2,1);
\draw[-] (2,1)--(3,2);
\draw[-,line width=2pt,cyan] (3,2)--(4,1);
\draw[-,line width=2pt,cyan] (4,1)--(5,1);
\draw[-,line width=2pt,cyan] (5,1)--(6,1);
\draw[-] (6,1)--(7,1);
\draw[-,line width=2pt,cyan] (7,1)--(9,1);
\draw[-,line width=2pt,cyan] (7,2)--(7,1);
\draw[-,line width=2pt,cyan] (3,2)--(7,2);
\draw[-,line width=2pt,cyan] (0,1)..controls(0,-0.2)and(6.3,-0.3)..(6,1);
\filldraw[red]  (7,2)    circle (2pt);
\filldraw [black]  (0,1)    circle (2pt)
[black]  (1,1)    circle (2pt)
[black]  (2,1)    circle (2pt)
[black]  (3,2)    circle (2pt)
[black]  (4,1)    circle (2pt)
[black]  (6,1)    circle (2pt)
[black]  (7,1)    circle (2pt)
[black]  (9,1)    circle (2pt);
\draw (-0.3,0.7) node {$p_1$};
\draw (2,0.6) node {$p_{i-1}$};
\draw (2.6,2.2) node {$p_i$};
\draw (4.4,0.6) node {$p_{i+1}$};
\draw (6.1,1.4) node {$p_{x-1}$};
\draw (7.2,0.6) node {$p_x$};
\draw (9,0.6) node {$p_{n-1}$};
\draw (7.2,2.2) node {$q$};
\draw (8.5,3) node {};
\end{tikzpicture}
\begin{tikzpicture}[scale=0.5]
\draw(6,2.3) node {};
  \draw(4,-0.5) node {$Q$ can't connect $p_{x+1}$ with $p_{n-1}$};
\draw[-,line width=2pt,cyan] (0,1)--(2,1);
\draw[-,line width=2pt,cyan] (3,1)--(5,1);
\draw[-,line width=2pt,cyan] (2,1)--(2,2);
\draw[-] (2,1)--(3,1);
\draw[-] (6,2)--(7,1);
\draw[-,line width=2pt,cyan] (5,1)--(6,2);
\draw[-,line width=2pt,cyan] (7,1)--(9,1);
\draw[-,line width=2pt,cyan] (7,1)--(8,1);
\draw[-,line width=2pt,cyan] (2,2)--(6,2);
\draw[-,line width=2pt,cyan] (3,1)..controls(2.8,-0.2)and(9.2,-0.2)..(9,1);
\filldraw[red]  (2,2)    circle (2pt);
\filldraw [black]  (0,1)    circle (2pt)
[black]  (2,1)    circle (2pt)
[black]  (3,1)    circle (2pt)
[black]  (5,1)    circle (2pt)
[black]  (6,2)    circle (2pt)
[black]  (7,1)    circle (2pt)
[black]  (9,1)    circle (2pt);
\draw (0,0.6) node {$p_1$};
\draw (2,0.6) node {$p_{x}$};
\draw (3.2,1.4) node {$p_{x+1}$};
\draw (5.6,0.6) node {$p_{i-1}$};
\draw (6.3,2.3) node {$p_i$};
\draw (7.5,0.6) node {$p_{i+1}$};
\draw (9.4,1.4) node {$p_{n-1}$};
\draw (1.7,2.2) node {$q$};
\draw (8.5,3) node {};
\end{tikzpicture}

  \noindent (4) The Hamiltonian paths in the following two diagrams show that when $j<i$, the pair $(p_1,p_{i-1})$ is forbidden,
   and if $k>i$, the pair $(p_{i+1},p_{n-1})$ is forbidden, which concludes the proof.

\noindent
\begin{tikzpicture}[scale=0.5]
\draw(6,2.3) node {};
  \draw(4,-0.5) node {$Q$ can't connect $p_{1}$ with $p_{i-1}$};
\draw[-,line width=2pt,cyan] (0,1)--(2,1);
\draw[-,line width=2pt,cyan] (3,1)--(5,1);
\draw[-,line width=2pt,cyan] (3,1)--(3,2);
\draw[-] (2,1)--(3,1);
\draw[-] (5,1)--(6,2);
\draw[-,line width=2pt,cyan] (6,2)--(7,1);
\draw[-,line width=2pt,cyan] (7,1)--(9,1);
\draw[-,line width=2pt,cyan] (7,1)--(8,1);
\draw[-,line width=2pt,cyan] (3,2)--(6,2);
\draw[-,line width=2pt,cyan] (0,1)..controls(0,-0.2)and(5,0)..(5,1);
\filldraw[red]  (3,2)    circle (2pt);
\filldraw [black]  (0,1)    circle (2pt)
[black]  (2,1)    circle (2pt)
[black]  (3,1)    circle (2pt)
[black]  (5,1)    circle (2pt)
[black]  (6,2)    circle (2pt)
[black]  (7,1)    circle (2pt)
[black]  (9,1)    circle (2pt);
\draw (-0.3,0.7) node {$p_1$};
\draw (2,1.4) node {$p_{j-1}$};
\draw (3.2,0.6) node {$p_{j}$};
\draw (5.6,0.6) node {$p_{i-1}$};
\draw (6.3,2.3) node {$p_i$};
\draw (7.2,0.6) node {$p_{i+1}$};
\draw (9,0.6) node {$p_{n-1}$};
\draw (2.7,2.2) node {$q$};
\draw (8.5,3) node {};
\end{tikzpicture}
  \begin{tikzpicture}[scale=0.5]
\draw(6,2.3) node {};
\draw(4,-0.5) node {$Q$ can't connect $p_{i+1}$ with $p_{n-1}$};
\draw[-,line width=2pt,cyan] (0,1)--(1,1);
\draw[-,line width=2pt,cyan] (1,1)--(2,1);
\draw[-] (4,1)--(3,2);
\draw[-,line width=2pt,cyan] (3,2)--(2,1);
\draw[-,line width=2pt,cyan] (4,1)--(5,1);
\draw[-,line width=2pt,cyan] (5,1)--(6,1);
\draw[-] (6,1)--(7,1);
\draw[-,line width=2pt,cyan] (7,1)--(9,1);
\draw[-,line width=2pt,cyan] (6,2)--(6,1);
\draw[-,line width=2pt,cyan] (3,2)--(6,2);
\draw[-,line width=2pt,cyan] (4,1)..controls(4,-0.2)and(9.3,-0.3)..(9,1);
\filldraw[red]  (6,2)    circle (2pt);
\filldraw [black]  (0,1)    circle (2pt)
[black]  (1,1)    circle (2pt)
[black]  (2,1)    circle (2pt)
[black]  (3,2)    circle (2pt)
[black]  (4,1)    circle (2pt)
[black]  (6,1)    circle (2pt)
[black]  (7,1)    circle (2pt)
[black]  (9,1)    circle (2pt);
\draw (0,0.6) node {$p_1$};
\draw (2.3,0.6) node {$p_{i-1}$};
\draw (2.6,2.2) node {$p_i$};
\draw (4.6,1.4) node {$p_{i+1}$};
\draw (6.1,0.6) node {$p_{k}$};
\draw (7.2,1.4) node {$p_{k+1}$};
\draw (9.4,1.4) node {$p_{n-1}$};
\draw (6.2,2.2) node {$q$};
\draw (8.5,3) node {};
\end{tikzpicture}
\end{proof}

\subsection{The case $\ell=6$ and $n=8$}

Remember that $\{p_i\}=V(P)\setminus V(Q)$ and $\{q\}=V(Q)\setminus V(P)$. Since $i\notin\{1,n-1=7\}$ and by Proposition~\ref{prop no 2 ni n-2}
$i\notin\{2,n-2=6\}$, we know that $i\in\{3,4,5\}$.
By Remark~\ref{remark distancia minima} there exist $j,k$ such that $1<j<k<n-1$, and $|i-j|,|k-i|,|j-k|>1$.
The case $i=3$ is impossible, since then $3=i<j-1$ and $j+1<k$, and so $5=i+2<j+1<k$, thus $7\le k$, which contradicts $k<n-1=7$.
By symmetry, the case $i=5$ is also impossible.

Finally, we discard the case $i=4$. If $i<j<k$, then $4<j-1<k-3$ leads to $k\ge 8$, which is impossible, and if
$j<k<i$, then $j<k-1<i-3=1$ leads to the contradiction $j<0$. Thus $j<i<k$, and so $j=2$, $i=4$ and $k=6$ is the only case left.

So assume that $i=4$, $j=2$ and $k=6$. We know that $p_3$ cannot be an endpoint of $Q$, since then we could continue $Q$ with the edge $p_3p_4$.

  Thus two edges of the path $Q$ connect $p_3$ directly with two vertices in the set $\{p_1,p_2,p_5,p_6,p_7\}$.
  By Proposition~\ref{forbidden pairs}(1) an edge of $Q$ cannot connect $p_3=p_{i-1}$ with $p_{i+1}=p_5$ nor $p_3$ with $p_{n-1}=p_7$,
  and by item~(4) of the same proposition an edge of $Q$ cannot connect $p_3=p_{i-1}$ with $p_1$, since $j=2<i$.

Thus two edges of the path $Q$ connect $p_3$ with $p_2$ and $p_3$ with $p_6$, which implies that $Q$ contains the 4-cycle $p_2 p_3 p_6 q p_3$,
a contradiction that discards
the case $i=4$ and finishes the proof of the case $\ell=6$ and $n=8$.

\noindent \begin{tikzpicture}[scale=0.7]
  \draw(3,-0.2) node {$Q$ contains a 4-cycle};
\draw(-6,2.3) node {};
\draw[-] (0,1)--(1,1);
\draw[-] (1,1)--(2,1);
\draw[-] (2,1)--(3,2);
\draw[-] (3,2)--(4,1);
\draw[-] (4,1)--(5,1);
\draw[-] (5,1)--(6,1);
\draw[-,red,line width=2pt] (1,1)--(3,3);
\draw[-,red,line width=2pt] (3,3)--(5,1);
\draw[-,red,line width=2pt] (1,1)..controls(1,0)and(2,0)..(2,1);
\draw[-,red,line width=2pt] (2,1)..controls(2,0)and(5,0)..(5,1);
\draw[-,dotted] (3,2)..controls(3,2.5)..(3,3);
\filldraw[red]  (3,3)    circle (2pt);
\filldraw [black]  (0,1)    circle (2pt)
[black]  (1,1)    circle (2pt)
[black]  (2,1)    circle (2pt)
[black]  (3,2)    circle (2pt)
[black]  (4,1)    circle (2pt)
[black]  (5,1)    circle (2pt)
[black]  (6,1)    circle (2pt);
\draw (0,0.6) node {$p_1$};
\draw (0.7,0.6) node {$p_2$};
\draw (2.5,0.9) node {$p_3$};
\draw (3.4,2.2) node {$p_4$};
\draw (4,0.6) node {$p_{5}$};
\draw (5.3,0.6) node {$p_6$};
\draw (6.3,0.6) node {$p_7$};
\draw (3,3.4) node {$q$};
\draw (10.5,1) node {};
\end{tikzpicture}

\subsection{The case $\ell=7$ and $n=9$}
Remember that $\{p_i\}=V(P)\setminus V(Q)$ and $\{q\}=V(Q)\setminus V(P)$. Since $i\notin\{1,n-1=8\}$ and by Proposition~\ref{prop no 2 ni n-2}
$i\notin\{2,n-2=7\}$, we know that $i\in\{3,4,5,6\}$. By symmetry it suffices to discard the cases $i=3$ and $i=4$.
There are two vertices $p_j,p_k$ in $V(Q)\cap V(P)$ that are connected with $q$ via edges of $Q$, with $j<k$.

\begin{itemize}
  \item Assume that $i=3$.
\end{itemize}

By Remark~\ref{remark distancia minima} we know that $1<j,k<n-1$ and $|i-j|,|k-i|,|j-k|>1$.
The case $j<i=3$ is impossible, since then $j<i-1=2$ leads to $j=1$, which is impossible. Hence $i<j<k$, and the only possibility is
$$
i=3,j=5,k=7.
$$
We know that $p_4$ cannot be an endpoint of $Q$, since then we could continue $Q$ with the edge $p_4p_3$.
Thus two edges of the path $Q$ must connect $p_4$ directly with two vertices in the set $\{p_1,p_2,p_5,p_6,p_7,p_8\}$.
  By Proposition~\ref{forbidden pairs} an edge of $Q$ cannot connect $p_4=p_{i+1}$ with $p_1$, $p_2=p_{i-1}$, $p_6=p_{j+1}$ nor $p_8=p_{n-1}$.

Thus two edges of the path $Q$ connect $p_4$ with $p_5$ and $p_4$ with $p_7$, which implies that $Q$ contains the 4-cycle $p_4 p_5 q p_7 p_4$,
a contradiction that discards
the case $i=3$.

\noindent  \begin{tikzpicture}[scale=0.8]
  \draw(3.5,-0.2) node {$Q$ contains a 4-cycle};
\draw(-6,2.3) node {};
\draw[-] (0,1)--(1,1);
\draw[-] (1,1)--(2,2);
\draw[-] (2,2)--(3,1);
\draw[-] (3,1)--(4,1);
\draw[-] (4,1)--(5,1);
\draw[-] (5,1)--(6,1);
\draw[-] (6,1)--(7,1);
\draw[-,red,line width=2pt] (4,1)--(5,2);
\draw[-,red,line width=2pt] (5,2)--(6,1);
\draw[-,red,line width=2pt] (3,1)..controls(3,0)and(6,0)..(6,1);
\draw[-,red,line width=2pt] (3,1)..controls(3,0.5)and(4,0.5)..(4,1);
\draw[-,dotted] (2,2)..controls(3.5,2.5)..(5,2);
\filldraw[red]  (5,2)    circle (2pt);
\filldraw [black]  (0,1)    circle (2pt)
[black]  (1,1)    circle (2pt)
[black]  (2,2)    circle (2pt)
[black]  (3,1)    circle (2pt)
[black]  (4,1)    circle (2pt)
[black]  (5,1)    circle (2pt)
[black]  (6,1)    circle (2pt)
[black]  (7,1)    circle (2pt);
\draw (0,0.6) node {$p_1$};
\draw (1,0.6) node {$p_2$};
\draw (1.8,2.3) node {$p_3$};
\draw (2.6,0.9) node {$p_4$};
\draw (4.2,0.6) node {$p_{5}$};
\draw (5,0.6) node {$p_6$};
\draw (6.3,0.6) node {$p_7$};
\draw (7,0.6) node {$p_8$};
\draw (5.2,2.2) node {$q$};
\draw (8.5,3) node {};
\end{tikzpicture}

\begin{itemize}
  \item Assume that $i=4$.
\end{itemize}

By Remark~\ref{remark distancia minima} we know that $1<j,k<n-1=8$ and $|i-j|,|k-i|,|j-k|>1$.
The case $i=4<j$ can be discarded, since then $6\le j$ and $8\le k$, which is impossible.
Hence $j<i<k$, and the only two possibilities are
$$
j=2,i=4,k=6\quad\text{or}\quad j=2,i=4,k=7.
$$

\begin{itemize}
  \item[$\bullet\bullet$] Assume first that $(i,j,k)=(4,2,6)$.
\end{itemize}
We know that $p_5$ cannot be an endpoint of $Q$, since then we could continue $Q$ with the edge $p_5p_4$.

  Thus two edges of the path $Q$ connect $p_5$ directly with two vertices in the set
  $$
  (V(P)\cap V(Q))\setminus\{p_5\}=\{p_1,p_2,p_3,p_6,p_7,p_8\}.
  $$
  By Proposition~\ref{forbidden pairs} an edge of $Q$ cannot connect $p_5=p_{i+1}$ with $p_1$, $p_3=p_{i-1}$, $p_7=p_{j+1}$ nor $p_8=p_{n-1}$.

Thus two edges of the path $Q$ connect $p_5$ with $p_2$ and $p_5$ with $p_6$, which implies that $Q$ contains the 4-cycle $p_2 p_5 p_6 q p_2$,
a contradiction that discards
the case $i=4$, $j=2$ and $k=6$.

\begin{tikzpicture}[scale=0.8]
  \draw(3.5,-0.2) node {$Q$ contains a 4-cycle};
\draw(-6,2.3) node {};
\draw[-] (0,1)--(1,1);
\draw[-] (1,1)--(2,1);
\draw[-] (2,1)--(3,2);
\draw[-] (3,2)--(4,1);
\draw[-] (4,1)--(5,1);
\draw[-] (5,1)--(6,1);
\draw[-] (6,1)--(7,1);
\draw[-,red,line width=2pt] (1,1)--(3,3);
\draw[-,red,line width=2pt] (3,3)--(5,1);
\draw[-,red,line width=2pt] (4,1)..controls(4,0)and(1,0)..(1,1);
\draw[-,red,line width=2pt] (4,1)..controls(4,0.5)and(5,0.5)..(5,1);
\draw[-,dotted] (3,2)--(3,3);
\filldraw[red]  (3,3)    circle (2pt);
\filldraw [black]  (0,1)    circle (2pt)
[black]  (1,1)    circle (2pt)
[black]  (2,1)    circle (2pt)
[black]  (3,2)    circle (2pt)
[black]  (4,1)    circle (2pt)
[black]  (5,1)    circle (2pt)
[black]  (6,1)    circle (2pt)
[black]  (7,1)    circle (2pt);
\draw (0,0.6) node {$p_1$};
\draw (0.7,0.6) node {$p_2$};
\draw (2,0.6) node {$p_3$};
\draw (2.6,2.1) node {$p_4$};
\draw (3.6,1) node {$p_{5}$};
\draw (5.3,0.6) node {$p_6$};
\draw (6,0.6) node {$p_7$};
\draw (7,0.6) node {$p_8$};
\draw (3.2,3.2) node {$q$};
\draw (8.5,3) node {};
\end{tikzpicture}

\begin{itemize}
  \item[$\bullet\bullet$] Now assume that $(i,j,k)=(4,2,7)$.
\end{itemize}

We know that $p_3$ cannot be an endpoint of $Q$, since then we could continue $Q$ with the edge $p_3p_4$.

  Thus two edges of the path $Q$ connect $p_3$ directly with two vertices in the set
  $$
  (V(P)\cap V(Q))\setminus\{p_3\}=\{p_1,p_2,p_5,p_6,p_7,p_8\}.
  $$
   By Proposition~\ref{forbidden pairs} an edge of $Q$ cannot connect $p_3=p_{i-1}$ with $p_1$, $p_5=p_{i+1}$, $p_6=p_{k-1}$ nor $p_8=p_{n-1}$.

Thus two edges of the path $Q$ connect $p_3$ with $p_2$ and $p_3$ with $p_7$, which implies that $Q$ contains the 4-cycle $p_2 p_3 p_7 q p_2$,
a contradiction that discards
the case $i=4$, $j=2$ and $k=7$, finishing the case $i=4$ and thus we have proved that $\ell=7$ and $n=9$ is impossible.

\begin{tikzpicture}[scale=0.8]
  \draw(3.5,-0.2) node {$Q$ contains a 4-cycle};
\draw(-6,2.3) node {};
\draw[-] (0,1)--(1,1);
\draw[-] (1,1)--(2,1);
\draw[-] (2,1)--(3,2);
\draw[-] (3,2)--(4,1);
\draw[-] (4,1)--(5,1);
\draw[-] (5,1)--(6,1);
\draw[-] (6,1)--(7,1);
\draw[-,red,line width=2pt] (1,1)--(3,3);
\draw[-,red,line width=2pt] (3,3)--(6,1);
\draw[-,red,line width=2pt] (1,1)..controls(1,0.5)and(2,0.5)..(2,1);
\draw[-,red,line width=2pt] (2,1)..controls(2,0)and(6,0)..(6,1);
\draw[-,dotted] (3,2)--(3,3);
\filldraw[red]  (3,3)    circle (2pt);
\filldraw [black]  (0,1)    circle (2pt)
[black]  (1,1)    circle (2pt)
[black]  (2,1)    circle (2pt)
[black]  (3,2)    circle (2pt)
[black]  (4,1)    circle (2pt)
[black]  (5,1)    circle (2pt)
[black]  (6,1)    circle (2pt)
[black]  (7,1)    circle (2pt);
\draw (0,0.6) node {$p_1$};
\draw (0.7,0.6) node {$p_2$};
\draw (2.4,0.9) node {$p_3$};
\draw (2.6,2.1) node {$p_4$};
\draw (4,0.6) node {$p_{5}$};
\draw (5,0.6) node {$p_6$};
\draw (6.3,0.6) node {$p_7$};
\draw (7.3,0.6) node {$p_8$};
\draw (3.2,3.2) node {$q$};
\draw (8.5,3) node {};
\end{tikzpicture}

\subsection{The case $\ell=8$ and $n=10$}
Remember that $\{p_i\}=V(P)\setminus V(Q)$ and $\{q\}=V(Q)\setminus V(P)$. Since $i\notin\{1,n-1=9\}$ and by Proposition~\ref{prop no 2 ni n-2}
$i\notin\{2,n-2=8\}$, we know that $i\in\{3,4,5,6,7\}$. By symmetry it suffices to discard the cases $i=3$, $i=4$ and $i=5$.
There are two vertices $p_j,p_k$ in $V(Q)\cap V(P)$ that are connected with $q$ via edges of $Q$, and we can and will assume that $j<k$.

\begin{itemize}
  \item Assume that $i=3$.
\end{itemize}

By Remark~\ref{remark distancia minima} we know that $1<j,k<n-1=9$ and $|i-j|,|k-i|,|j-k|>1$.
The case $j<i=3$ is impossible, since then $j<i-1=2$ leads to $j=1$, which we already discarded. Hence $i<j<k$, and the only three
possibilities are
$$
(i,j,k)=(3,5,7),\quad (i,j,k)=(3,5,8)\quad \text{and} \quad (i,j,k)=(3,6,8)
$$
\begin{itemize}
  \item[$\bullet\bullet$] Assume that $(i,j,k)=(3,5,7)$.
\end{itemize}

 We know that $p_4$ cannot be an endpoint of $Q$, since then we could continue $Q$ with the edge $p_4p_3$.

  Thus two edges of the path $Q$ connect $p_4$ directly with two vertices in the set
  $$
  (V(P)\cap V(Q))\setminus\{p_3\}=\{p_1,p_2,p_5,p_6,p_7,p_8,p_9\}.
  $$
   By Proposition~\ref{forbidden pairs} an edge of $Q$ cannot connect $p_4=p_{i+1}$ with $p_1$, $p_2=p_{i-1}$, $p_6=p_{j+1}$,
   $p_8=p_{k+1}$ nor $p_9=p_{n-1}$.

Thus two edges of the path $Q$ connect $p_4$ with $p_5$ and $p_4$ with $p_7$, which implies that $Q$ contains the 4-cycle $p_4 p_5 q p_7  p_4$,
a contradiction that discards
the case $i=3$, $j=5$ and $k=7$.

\begin{tikzpicture}[scale=0.8]
  \draw(4,-0.2) node {$Q$ contains a 4-cycle};
\draw(-6,2.3) node {};
\draw[-] (0,1)--(1,1);
\draw[-] (1,1)--(2,2);
\draw[-] (2,2)--(3,1);
\draw[-] (3,1)--(4,1);
\draw[-] (4,1)--(5,1);
\draw[-] (5,1)--(6,1);
\draw[-] (6,1)--(7,1);
\draw[-] (7,1)--(8,1);
\draw[-,line width=2pt,red] (4,1)--(5,2);
\draw[-,line width=2pt,red] (5,2)--(6,1);
\draw[-,line width=2pt,red] (3,1)..controls(3,0.5)and(4,0.5)..(4,1);
\draw[-,line width=2pt,red] (3,1)..controls(3,0)and(6,0)..(6,1);
\draw[-,dotted] (2,2)..controls(3.5,2.5)..(5,2);
\filldraw[red]  (5,2)    circle (2pt);
\filldraw [black]  (0,1)    circle (2pt)
[black]  (1,1)    circle (2pt)
[black]  (2,2)    circle (2pt)
[black]  (3,1)    circle (2pt)
[black]  (4,1)    circle (2pt)
[black]  (5,1)    circle (2pt)
[black]  (6,1)    circle (2pt)
[black]  (7,1)    circle (2pt)
[black]  (8,1)    circle (2pt);
\draw (0,0.6) node {$p_1$};
\draw (1,0.6) node {$p_2$};
\draw (1.8,2.3) node {$p_3$};
\draw (2.6,0.9) node {$p_4$};
\draw (4.3,0.6) node {$p_{5}$};
\draw (5,0.6) node {$p_6$};
\draw (6.3,0.6) node {$p_7$};
\draw (7,0.6) node {$p_8$};
\draw (8,0.6) node {$p_9$};
\draw (5.2,2.2) node {$q$};
\draw (8.5,3) node {};
\end{tikzpicture}

\begin{itemize}
  \item[$\bullet\bullet$] Assume that $(i,j,k)=(3,5,8)$.
\end{itemize}

We know that $p_4$ cannot be an endpoint of $Q$, since then we could continue $Q$ with the edge $p_4p_3$.
  Thus two edges of the path $Q$ connect $p_4$ directly with two vertices in the set
  $$
  (V(P)\cap V(Q))\setminus\{p_3\}=\{p_1,p_2,p_5,p_6,p_7,p_8,p_9\}.
  $$
 By Proposition~\ref{forbidden pairs} an edge of $Q$ cannot connect $p_4=p_{i+1}$ with $p_1$, $p_2=p_{i-1}$, $p_6=p_{j+1}$ nor $p_9=p_{n-1}$.

  The first diagram shows that if an edge of $Q$ connects $p_4$ with $p_7$, then there is a (Hamiltonian) path of length 9,
which contradicts that $P$ and $Q$ of length $n-2=8$ are longest paths.

\noindent
\begin{tikzpicture}[scale=0.8]
  \draw(4,-0.2) node {$Q$ cannot connect $p_4$ with $p_7$};
\draw(6,2.3) node {};
\draw[-,line width=2pt,cyan] (0,1)--(1,1);
\draw[-,line width=2pt,cyan] (1,1)--(2,2);
\draw[-,line width=2pt,cyan] (2,2)--(3,1);
\draw[-] (3,1)--(4,1);
\draw[-,line width=2pt,cyan] (4,1)--(5,1);
\draw[-,line width=2pt,cyan] (5,1)--(6,1);
\draw[-] (6,1)--(7,1);
\draw[-,line width=2pt,cyan] (7,1)--(8,1);
\draw[-,line width=2pt,cyan] (4,1)--(5,2);
\draw[-,line width=2pt,cyan] (5,2)--(7,1);
\draw[-,line width=2pt,cyan] (3,1)..controls(3,0)and(6,0)..(6,1);
\draw[-,dotted] (2,2)..controls(3.5,2.5)..(5,2);
\filldraw[red]  (5,2)    circle (2pt);
\filldraw [black]  (0,1)    circle (2pt)
[black]  (1,1)    circle (2pt)
[black]  (2,2)    circle (2pt)
[black]  (3,1)    circle (2pt)
[black]  (4,1)    circle (2pt)
[black]  (5,1)    circle (2pt)
[black]  (6,1)    circle (2pt)
[black]  (7,1)    circle (2pt)
[black]  (8,1)    circle (2pt);
\draw (0,0.6) node {$p_1$};
\draw (1,0.6) node {$p_2$};
\draw (1.8,2.3) node {$p_3$};
\draw (2.6,0.9) node {$p_4$};
\draw (4,0.6) node {$p_{5}$};
\draw (5,0.6) node {$p_6$};
\draw (6.3,0.6) node {$p_7$};
\draw (7,0.6) node {$p_8$};
\draw (8,0.6) node {$p_9$};
\draw (5.2,2.2) node {$q$};
\draw (8.5,3) node {};
\end{tikzpicture}
\begin{tikzpicture}[scale=0.8]
  \draw(4,-0.2) node {$Q$ contains a 4-cycle};
\draw(6,2.3) node {};
\draw[-] (0,1)--(1,1);
\draw[-] (1,1)--(2,2);
\draw[-] (2,2)--(3,1);
\draw[-] (3,1)--(4,1);
\draw[-] (4,1)--(5,1);
\draw[-] (5,1)--(6,1);
\draw[-] (6,1)--(7,1);
\draw[-] (7,1)--(8,1);
\draw[-,line width=2pt,red] (4,1)--(5,2);
\draw[-,line width=2pt,red] (5,2)--(7,1);
\draw[-,line width=2pt,red] (3,1)..controls(3,0.5)and(4,0.5)..(4,1);
\draw[-,line width=2pt,red] (3,1)..controls(3,0)and(7,0)..(7,1);
\draw[-,dotted] (2,2)..controls(3.5,2.5)..(5,2);
\filldraw[red]  (5,2)    circle (2pt);
\filldraw [black]  (0,1)    circle (2pt)
[black]  (1,1)    circle (2pt)
[black]  (2,2)    circle (2pt)
[black]  (3,1)    circle (2pt)
[black]  (4,1)    circle (2pt)
[black]  (5,1)    circle (2pt)
[black]  (6,1)    circle (2pt)
[black]  (7,1)    circle (2pt)
[black]  (8,1)    circle (2pt);
\draw (0,0.6) node {$p_1$};
\draw (1,0.6) node {$p_2$};
\draw (1.8,2.3) node {$p_3$};
\draw (2.6,0.9) node {$p_4$};
\draw (4.3,0.6) node {$p_{5}$};
\draw (5,0.6) node {$p_6$};
\draw (6,0.6) node {$p_7$};
\draw (7.3,0.6) node {$p_8$};
\draw (8,0.6) node {$p_9$};
\draw (5.2,2.2) node {$q$};
\draw (8.5,3) node {};
\end{tikzpicture}

Thus two edges of the path $Q$ connect $p_4$ with $p_5$ and $p_4$ with $p_8$, which implies that $Q$ contains the 4-cycle $p_4 p_5 q p_8  p_4$,
a contradiction that discards
the case $i=3$, $j=5$ and $k=8$.

\begin{itemize}
  \item[$\bullet\bullet$] Assume that $(i,j,k)=(3,6,8)$.
\end{itemize}

In this case it doesn't suffice to analyze the connections of a single vertex as in all the previous cases,
and we have to consider the connections of $p_7$ and $p_9$.
Proposition~\ref{forbidden pairs} and the Hamiltonian path in the following diagram show that an edge of $Q$ cannot connect $p_7$ with one of $\{p_1,p_2,p_4,p_5,p_9\}$,
and that an edge of $Q$ cannot connect $p_9$ with one of $\{p_1,p_2,p_4,p_5,p_7\}$.

\noindent \begin{tikzpicture}[scale=0.5]
  \draw(4,-0.4) node {$Q$ can't connect $p_5$ with $p_7$};
\draw(6,2.3) node {};
\draw[-,line width=2pt,cyan] (0,1)--(1,1);
\draw[-,line width=2pt,cyan] (1,1)--(2,2);
\draw[-,line width=2pt,cyan] (2,2)--(3,1);
\draw[-,line width=2pt,cyan] (3,1)--(4,1);
\draw[-] (4,1)--(5,1);
\draw[-,line width=2pt,cyan] (5,1)--(6,1);
\draw[-] (6,1)--(7,1);
\draw[-,line width=2pt,cyan] (7,1)--(8,1);
\draw[-,line width=2pt,cyan] (5,1)--(6,2);
\draw[-,line width=2pt,cyan] (6,2)--(7,1);
\draw[-,line width=2pt,cyan] (4,1)..controls(4,-0.3)and(6,-0.3)..(6,1);
\draw[-,dotted] (2,2)..controls(4,2.5)..(6,2);
\filldraw[red]  (6,2)    circle (2pt);
\filldraw [black]  (0,1)    circle (2pt)
[black]  (1,1)    circle (2pt)
[black]  (2,2)    circle (2pt)
[black]  (3,1)    circle (2pt)
[black]  (4,1)    circle (2pt)
[black]  (5,1)    circle (2pt)
[black]  (6,1)    circle (2pt)
[black]  (7,1)    circle (2pt)
[black]  (8,1)    circle (2pt);
\draw (0,0.6) node {$p_1$};
\draw (1,0.6) node {$p_2$};
\draw (1.8,2.4) node {$p_3$};
\draw (2.6,0.9) node {$p_4$};
\draw (3.7,0.6) node {$p_{5}$};
\draw (5,0.6) node {$p_6$};
\draw (6.3,0.6) node {$p_7$};
\draw (7,0.6) node {$p_8$};
\draw (8,0.6) node {$p_9$};
\draw (6.2,2.2) node {$q$};
\draw (8.5,3) node {};
\end{tikzpicture}

Thus $p_7$ can be connected only with $p_6$ and $p_8$ by edges of $Q$, and similarly
$p_9$ can be connected only with $p_6$ and $p_8$ by edges of $Q$.

Hence, if $p_7$ is not an endpoint of $Q$, then $Q$ contains the 4-cycle $p_6 p_7 p_8 q p_6$, and similarly,
if $p_9$ is not an endpoint of $Q$, then $Q$ contains the 4-cycle $p_6 p_9 p_8 q p_6$.

\noindent \begin{tikzpicture}[scale=0.8]
  \draw(4,-0.4) node {A 4-cycle, if $p_7$ is not an endpoint of $Q$};
\draw(6,2.3) node {};
\draw[-] (0,1)--(1,1);
\draw[-] (1,1)--(2,2);
\draw[-] (2,2)--(3,1);
\draw[-] (3,1)--(4,1);
\draw[-] (4,1)--(5,1);
\draw[-] (5,1)--(6,1);
\draw[-] (6,1)--(7,1);
\draw[-] (7,1)--(8,1);
\draw[-,line width=2pt,red] (5,1)--(6,2);
\draw[-,line width=2pt,red] (6,2)--(7,1);
\draw[-,line width=2pt,red] (5,1)..controls(5,0.3)and(6,0.3)..(6,1);
\draw[-,line width=2pt,red] (6,1)..controls(6,0.3)and(7,0.3)..(7,1);
\draw[-,dotted] (2,2)..controls(4,2.5)..(6,2);
\filldraw[red]  (6,2)    circle (2pt);
\filldraw [black]  (0,1)    circle (2pt)
[black]  (1,1)    circle (2pt)
[black]  (2,2)    circle (2pt)
[black]  (3,1)    circle (2pt)
[black]  (4,1)    circle (2pt)
[black]  (5,1)    circle (2pt)
[black]  (6,1)    circle (2pt)
[black]  (7,1)    circle (2pt)
[black]  (8,1)    circle (2pt);
\draw (0,0.6) node {$p_1$};
\draw (1,0.6) node {$p_2$};
\draw (1.8,2.4) node {$p_3$};
\draw (2.6,0.9) node {$p_4$};
\draw (4,0.6) node {$p_{5}$};
\draw (4.7,0.6) node {$p_6$};
\draw (6,1.3) node {$p_7$};
\draw (7.3,0.6) node {$p_8$};
\draw (8,0.6) node {$p_9$};
\draw (6.2,2.2) node {$q$};
\draw (8.5,3) node {};
\end{tikzpicture}
\begin{tikzpicture}[scale=0.8]
  \draw(4,-0.4) node {A 4-cycle, if $p_9$ is not an endpoint of $Q$};
\draw(6,2.3) node {};
\draw[-] (0,1)--(1,1);
\draw[-] (1,1)--(2,2);
\draw[-] (2,2)--(3,1);
\draw[-] (3,1)--(4,1);
\draw[-] (4,1)--(5,1);
\draw[-] (5,1)--(6,1);
\draw[-] (6,1)--(7,1);
\draw[-] (7,1)--(8,1);
\draw[-,line width=2pt,red] (5,1)--(6,2);
\draw[-,line width=2pt,red] (6,2)--(7,1);
\draw[-,line width=2pt,red] (5,1)..controls(5,-0.3)and(8,-0.3)..(8,1);
\draw[-,line width=2pt,red] (7,1)..controls(7,0.3)and(8,0.3)..(8,1);
\draw[-,dotted] (2,2)..controls(4,2.5)..(6,2);
\filldraw[red]  (6,2)    circle (2pt);
\filldraw [black]  (0,1)    circle (2pt)
[black]  (1,1)    circle (2pt)
[black]  (2,2)    circle (2pt)
[black]  (3,1)    circle (2pt)
[black]  (4,1)    circle (2pt)
[black]  (5,1)    circle (2pt)
[black]  (6,1)    circle (2pt)
[black]  (7,1)    circle (2pt)
[black]  (8,1)    circle (2pt);
\draw (0,0.6) node {$p_1$};
\draw (1,0.6) node {$p_2$};
\draw (1.8,2.4) node {$p_3$};
\draw (2.6,0.9) node {$p_4$};
\draw (4,0.6) node {$p_{5}$};
\draw (4.7,0.6) node {$p_6$};
\draw (6,0.6) node {$p_7$};
\draw (6.7,0.6) node {$p_8$};
\draw (8.3,0.6) node {$p_9$};
\draw (6.2,2.2) node {$q$};
\draw (8.5,3) node {};
\end{tikzpicture}

So $p_7$ and $p_9$ are the endpoints of $Q$, and each can be connected by $Q$ only with $p_6$ or $p_8$.
Then either $p_7$ connects with $p_6$ and $p_9$ with $p_8$, which leads to $Q=p_7p_6qp_8p_9$, and $Q$ has length 4, or
$p_7$ connects with $p_8$ and $p_9$ with $p_6$, which leads to $Q=p_7p_8qp_6p_9$, and $Q$ has length 4, too.

\noindent \begin{tikzpicture}[scale=0.8]
  \draw(4,-0.4) node {$p_7$ and $p_9$ are endpoints of $Q$, $L(Q)=4$};
\draw(6,2.3) node {};
\draw[-] (0,1)--(1,1);
\draw[-] (1,1)--(2,2);
\draw[-] (2,2)--(3,1);
\draw[-] (3,1)--(4,1);
\draw[-] (4,1)--(5,1);
\draw[-] (5,1)--(6,1);
\draw[-] (6,1)--(7,1);
\draw[-] (7,1)--(8,1);
\draw[-,line width=2pt,red] (5,1)--(6,2);
\draw[-,line width=2pt,red] (6,2)--(7,1);
\draw[-,line width=2pt,red] (5,1)..controls(5,-0.3)and(8,-0.3)..(8,1);
\draw[-,line width=2pt,red] (6,1)..controls(6,0.3)and(7,0.3)..(7,1);
\draw[-,dotted] (2,2)..controls(4,2.5)..(6,2);
\filldraw[red]  (6,2)    circle (2pt);
\filldraw [black]  (0,1)    circle (2pt)
[black]  (1,1)    circle (2pt)
[black]  (2,2)    circle (2pt)
[black]  (3,1)    circle (2pt)
[black]  (4,1)    circle (2pt)
[black]  (5,1)    circle (2pt)
[black]  (6,1)    circle (2pt)
[black]  (7,1)    circle (2pt)
[black]  (8,1)    circle (2pt);
\draw (0,0.6) node {$p_1$};
\draw (1,0.6) node {$p_2$};
\draw (1.8,2.4) node {$p_3$};
\draw (2.6,0.9) node {$p_4$};
\draw (4,0.6) node {$p_{5}$};
\draw (4.7,0.6) node {$p_6$};
\draw (5.7,0.6) node {$p_7$};
\draw (7.3,0.6) node {$p_8$};
\draw (8.3,0.6) node {$p_9$};
\draw (6.2,2.2) node {$q$};
\draw (8.5,3) node {};
\end{tikzpicture}
\begin{tikzpicture}[scale=0.8]
  \draw(4,-0.4) node {$p_7$ and $p_9$ are endpoints of $Q$, $L(Q)=4$};
\draw(6,2.3) node {};
\draw[-] (0,1)--(1,1);
\draw[-] (1,1)--(2,2);
\draw[-] (2,2)--(3,1);
\draw[-] (3,1)--(4,1);
\draw[-] (4,1)--(5,1);
\draw[-] (5,1)--(6,1);
\draw[-] (6,1)--(7,1);
\draw[-] (7,1)--(8,1);
\draw[-,line width=2pt,red] (5,1)--(6,2);
\draw[-,line width=2pt,red] (6,2)--(7,1);
\draw[-,line width=2pt,red] (5,1)..controls(5,0.3)and(6,0.3)..(6,1);
\draw[-,line width=2pt,red] (7,1)..controls(7,0.3)and(8,0.3)..(8,1);
\draw[-,dotted] (2,2)..controls(4,2.5)..(6,2);
\filldraw[red]  (6,2)    circle (2pt);
\filldraw [black]  (0,1)    circle (2pt)
[black]  (1,1)    circle (2pt)
[black]  (2,2)    circle (2pt)
[black]  (3,1)    circle (2pt)
[black]  (4,1)    circle (2pt)
[black]  (5,1)    circle (2pt)
[black]  (6,1)    circle (2pt)
[black]  (7,1)    circle (2pt)
[black]  (8,1)    circle (2pt);
\draw (0,0.6) node {$p_1$};
\draw (1,0.6) node {$p_2$};
\draw (1.8,2.4) node {$p_3$};
\draw (2.6,0.9) node {$p_4$};
\draw (4,0.6) node {$p_{5}$};
\draw (4.7,0.6) node {$p_6$};
\draw (6,1.3) node {$p_7$};
\draw (6.7,0.6) node {$p_8$};
\draw (8.3,0.6) node {$p_9$};
\draw (6.2,2.2) node {$q$};
\draw (8.5,3) node {};
\end{tikzpicture}

This contradiction discards the case $i=3$, $j=6$, $k=8$, and finishes the case $i=3$.

\begin{itemize}
  \item Assume that $i=4$.
\end{itemize}

By Remark~\ref{remark distancia minima} and assumption we know that $1<j<k<n-1=9$ and $|i-j|,|k-i|,|j-k|>1$.
The case $j<i=4$ yields $j=2$, since then $j<i-1=3$. Here we have the three cases $k=6$, $k=7$ and $k=8$.

\noindent \begin{tikzpicture}[scale=0.5]
  \draw(3.5,-0.5) node {$(i,j,k)=(4,2,6)$};
\draw(6,2.3) node {};
\draw[-] (0,1)--(1,1);
\draw[-] (1,1)--(2,1);
\draw[-] (2,1)--(3,2);
\draw[-] (3,2)--(4,1);
\draw[-] (4,1)--(5,1);
\draw[-] (5,1)--(6,1);
\draw[-] (6,1)--(7,1);
\draw[-] (7,1)--(8,1);
\draw[-,red] (1,1)--(3,3);
\draw[-,red] (3,3)--(5,1);
\draw[-,line width=1.5pt,dotted] (3,2)--(3,3);
\filldraw[red]  (3,3)    circle (2pt);
\filldraw [black]  (0,1)    circle (2pt)
[black]  (1,1)    circle (2pt)
[black]  (2,1)    circle (2pt)
[black]  (3,2)    circle (2pt)
[black]  (4,1)    circle (2pt)
[black]  (5,1)    circle (2pt)
[black]  (6,1)    circle (2pt)
[black]  (7,1)    circle (2pt)
[black]  (8,1)    circle (2pt);
\draw (0,0.6) node {$p_1$};
\draw (1,0.6) node {$p_2$};
\draw (2,0.6) node {$p_3$};
\draw (2.6,2.1) node {$p_4$};
\draw (4,0.6) node {$p_{5}$};
\draw (5,0.6) node {$p_6$};
\draw (6,0.6) node {$p_7$};
\draw (7,0.6) node {$p_8$};
\draw (8,0.6) node {$p_9$};
\draw (3.2,3.2) node {$q$};
\draw (8.5,3) node {};
\end{tikzpicture}
\begin{tikzpicture}[scale=0.5]
  \draw(3.5,-0.5) node {$(i,j,k)=(4,2,7)$};
\draw(6,2.3) node {};
\draw[-] (0,1)--(1,1);
\draw[-] (1,1)--(2,1);
\draw[-] (2,1)--(3,2);
\draw[-] (3,2)--(4,1);
\draw[-] (4,1)--(5,1);
\draw[-] (5,1)--(6,1);
\draw[-] (6,1)--(7,1);
\draw[-] (7,1)--(8,1);
\draw[-,red] (1,1)--(3,3);
\draw[-,red] (3,3)--(6,1);
\draw[-,line width=1.5pt,dotted] (3,2)--(3,3);
\filldraw[red]  (3,3)    circle (2pt);
\filldraw [black]  (0,1)    circle (2pt)
[black]  (1,1)    circle (2pt)
[black]  (2,1)    circle (2pt)
[black]  (3,2)    circle (2pt)
[black]  (4,1)    circle (2pt)
[black]  (5,1)    circle (2pt)
[black]  (6,1)    circle (2pt)
[black]  (7,1)    circle (2pt)
[black]  (8,1)    circle (2pt);
\draw (0,0.6) node {$p_1$};
\draw (1,0.6) node {$p_2$};
\draw (2,0.6) node {$p_3$};
\draw (2.6,2.1) node {$p_4$};
\draw (4,0.6) node {$p_{5}$};
\draw (5,0.6) node {$p_6$};
\draw (6,0.6) node {$p_7$};
\draw (7,0.6) node {$p_8$};
\draw (8,0.6) node {$p_9$};
\draw (3.2,3.2) node {$q$};
\draw (8.5,3) node {};
\end{tikzpicture}
\begin{tikzpicture}[scale=0.5]
  \draw(3.5,-0.5) node {$(i,j,k)=(4,2,8)$};
\draw(6,2.3) node {};
\draw[-] (0,1)--(1,1);
\draw[-] (1,1)--(2,1);
\draw[-] (2,1)--(3,2);
\draw[-] (3,2)--(4,1);
\draw[-] (4,1)--(5,1);
\draw[-] (5,1)--(6,1);
\draw[-] (6,1)--(7,1);
\draw[-] (7,1)--(8,1);
\draw[-,red] (1,1)--(3,3);
\draw[-,red] (3,3)--(7,1);
\draw[-,line width=1.5pt,dotted] (3,2)--(3,3);
\filldraw[red]  (3,3)    circle (2pt);
\filldraw [black]  (0,1)    circle (2pt)
[black]  (1,1)    circle (2pt)
[black]  (2,1)    circle (2pt)
[black]  (3,2)    circle (2pt)
[black]  (4,1)    circle (2pt)
[black]  (5,1)    circle (2pt)
[black]  (6,1)    circle (2pt)
[black]  (7,1)    circle (2pt)
[black]  (8,1)    circle (2pt);
\draw (0,0.6) node {$p_1$};
\draw (1,0.6) node {$p_2$};
\draw (2,0.6) node {$p_3$};
\draw (2.6,2.1) node {$p_4$};
\draw (4,0.6) node {$p_{5}$};
\draw (5,0.6) node {$p_6$};
\draw (6,0.6) node {$p_7$};
\draw (7,0.6) node {$p_8$};
\draw (8,0.6) node {$p_9$};
\draw (3.2,3.2) node {$q$};
\draw (8.5,3) node {};
\end{tikzpicture}

If $j>i=4$, then the only possibility is $(i,j,k)=(4,6,8)$.
${\vcenter{\hbox{
\begin{tikzpicture}[scale=0.5]
\draw(6,2.3) node {};
\draw[-] (0,1)--(1,1);
\draw[-] (1,1)--(2,1);
\draw[-] (2,1)--(3,2);
\draw[-] (3,2)--(4,1);
\draw[-] (4,1)--(5,1);
\draw[-] (5,1)--(6,1);
\draw[-] (6,1)--(7,1);
\draw[-] (7,1)--(8,1);
\draw[-,red] (5,1)--(6,2);
\draw[-,red] (6,2)--(7,1);
\draw[-,line width=1.5pt,dotted] (3,2)--(6,2);
\filldraw[red]  (6,2)    circle (2pt);
\filldraw [black]  (0,1)    circle (2pt)
[black]  (1,1)    circle (2pt)
[black]  (2,1)    circle (2pt)
[black]  (3,2)    circle (2pt)
[black]  (4,1)    circle (2pt)
[black]  (5,1)    circle (2pt)
[black]  (6,1)    circle (2pt)
[black]  (7,1)    circle (2pt)
[black]  (8,1)    circle (2pt);
\draw (0,0.6) node {$p_1$};
\draw (1,0.6) node {$p_2$};
\draw (2,0.6) node {$p_3$};
\draw (2.6,2.2) node {$p_4$};
\draw (4,0.6) node {$p_{5}$};
\draw (5,0.6) node {$p_6$};
\draw (6,0.6) node {$p_7$};
\draw (7,0.6) node {$p_8$};
\draw (8,0.6) node {$p_9$};
\draw (6.2,2.2) node {$q$};
\draw (8.5,3) node {};
\end{tikzpicture}}}}$

\begin{itemize}
  \item[$\bullet\bullet$] Assume that $(i,j,k)=(4,2,6)$.
\end{itemize}

In this case it doesn't suffice to analyze the connections of a single vertex,
we have to consider the connections of $p_3$ and $p_5$.
Proposition~\ref{forbidden pairs} shows that an edge of $Q$ cannot connect $p_3$ with one of $\{p_1,p_5,p_7,p_9\}$, nor can it connect
 $p_5$ with one of $\{p_1,p_3,p_7,p_9\}$.
 Moreover, neither $p_3$ nor $p_5$ can be an endpoint of $Q$, since then we could extend $Q$ connecting $p_4$.
Hence each of $p_3$ and $p_5$ connects with two of $\{ p_2,p_6,p_8\}$, and since only one of them can connect with $p_2$, and
only one of them can connect with $p_6$, we have two possibilities: The first is that $p_3$ connects with $p_2$ and $p_8$; and $p_5$ connects with
$p_6$ and $p_8$. In the second one $p_3$ connects with $p_6$ and $p_8$; and $p_5$ connects with
$p_2$ and $p_8$. In both cases we obtain a 6-cycle contained in $Q$, which is impossible and discards the case $(i,j,k)=(4,2,6)$.

\noindent
\begin{tikzpicture}[scale=0.8]
  \draw(4,-0.5) node {$p_2\sim p_3\sim p_8$ and $p_6\sim p_5\sim p_8$};
\draw(6,2.3) node {};
\draw[-] (0,1)--(1,1);
\draw[-] (1,1)--(2,1);
\draw[-] (2,1)--(3,2);
\draw[-] (3,2)--(4,1);
\draw[-] (4,1)--(5,1);
\draw[-] (5,1)--(6,1);
\draw[-] (6,1)--(7,1);
\draw[-] (7,1)--(8,1);
\draw[-,red,line width=2pt] (1,1)--(3,3);
\draw[-,red,line width=2pt] (3,3)--(5,1);
\draw[-,red,line width=2pt] (4,1)..controls(4,0)and(7,0)..(7,1);
\draw[-,red,line width=2pt] (2,1)..controls(2,0)and(7.3,-0.6)..(7,1);
\draw[-,red,line width=2pt] (1,1)..controls(1,0.5)and(2,0.5)..(2,1);
\draw[-,red,line width=2pt] (4,1)..controls(4,0.5)and(5,0.5)..(5,1);
\draw[-,line width=1.5pt,dotted] (3,2)--(3,3);
\filldraw[red]  (3,3)    circle (2pt);
\filldraw [black]  (0,1)    circle (2pt)
[black]  (1,1)    circle (2pt)
[black]  (2,1)    circle (2pt)
[black]  (3,2)    circle (2pt)
[black]  (4,1)    circle (2pt)
[black]  (5,1)    circle (2pt)
[black]  (6,1)    circle (2pt)
[black]  (7,1)    circle (2pt)
[black]  (8,1)    circle (2pt);
\draw (0,0.6) node {$p_1$};
\draw (0.8,0.6) node {$p_2$};
\draw (2.4,0.9) node {$p_3$};
\draw (2.6,2.1) node {$p_4$};
\draw (3.7,0.6) node {$p_{5}$};
\draw (5.2,0.6) node {$p_6$};
\draw (6,0.6) node {$p_7$};
\draw (7.3,0.6) node {$p_8$};
\draw (8,0.6) node {$p_9$};
\draw (3.2,3.2) node {$q$};
\draw (8.5,3) node {};
\end{tikzpicture}
\begin{tikzpicture}[scale=0.8]
  \draw(4,-0.5) node {$p_6\sim p_3\sim p_8$ and $p_2\sim p_5\sim p_8$};
\draw(6,2.3) node {};
\draw[-] (0,1)--(1,1);
\draw[-] (1,1)--(2,1);
\draw[-] (2,1)--(3,2);
\draw[-] (3,2)--(4,1);
\draw[-] (4,1)--(5,1);
\draw[-] (5,1)--(6,1);
\draw[-] (6,1)--(7,1);
\draw[-] (7,1)--(8,1);
\draw[-,red,line width=2pt] (1,1)--(3,3);
\draw[-,red,line width=2pt] (3,3)--(5,1);
\draw[-,red,line width=2pt] (2,1)..controls(1.7,-0.6)and(7.3,-0.6)..(7,1);
\draw[-,red,line width=2pt] (2,1)..controls(2,0)and(5,0)..(5,1);
\draw[-,white,line width=4pt] (1,1)..controls(1,0)and(4,0)..(4,1);
\draw[-,red,line width=2pt] (1,1)..controls(1,0)and(4,0)..(4,1);
\draw[-,white,line width=4pt] (4,1)..controls(4,0)and(7,0)..(7,1);
\draw[-,red,line width=2pt] (4,1)..controls(4,0)and(7,0)..(7,1);
\draw[-,line width=1.5pt,dotted] (3,2)--(3,3);
\filldraw[red]  (3,3)    circle (2pt);
\filldraw [black]  (0,1)    circle (2pt)
[black]  (1,1)    circle (2pt)
[black]  (2,1)    circle (2pt)
[black]  (3,2)    circle (2pt)
[black]  (4,1)    circle (2pt)
[black]  (5,1)    circle (2pt)
[black]  (6,1)    circle (2pt)
[black]  (7,1)    circle (2pt)
[black]  (8,1)    circle (2pt);
\draw (0,0.6) node {$p_1$};
\draw (0.8,0.6) node {$p_2$};
\draw (2.4,0.9) node {$p_3$};
\draw (2.6,2.1) node {$p_4$};
\draw (3.6,0.9) node {$p_{5}$};
\draw (5.2,0.6) node {$p_6$};
\draw (6,0.6) node {$p_7$};
\draw (7.3,0.6) node {$p_8$};
\draw (8,0.6) node {$p_9$};
\draw (3.2,3.2) node {$q$};
\draw (8.5,3) node {};
\end{tikzpicture}

\begin{itemize}
  \item[$\bullet\bullet$] Assume that $(i,j,k)=(4,2,7)$.
\end{itemize}

We know that $p_3$ cannot be an endpoint of $Q$, since then we could continue $Q$ with the edge $p_3p_4$.

  Thus two edges of the path $Q$ connect $p_3$ directly with two vertices in the set
  $$
  (V(P)\cap V(Q))\setminus\{p_3\}=\{p_1,p_2,p_5,p_6,p_7,p_8,p_9\}.
  $$

  By Proposition~\ref{forbidden pairs} an edge of $Q$ cannot connect $p_3=p_{i-1}$ with $p_1$, $p_5=p_{i+1}$, $p_6=p_{k-1}$,
  $p_8=p_{k+1}$ nor $p_9=p_{n-1}$.

Thus two edges of the path $Q$ connect $p_3$ with $p_2$ and $p_3$ with $p_7$, which implies that $Q$ contains the 4-cycle $p_2 p_3  p_7 q p_2$,
a contradiction that discards
the case $i=4$, $j=2$ and $k=7$.
\begin{tikzpicture}[scale=0.8]
  \draw(4,-0.5) node {$Q$ contains a 4-cycle};
\draw(6,2.3) node {};
\draw[-] (0,1)--(1,1);
\draw[-] (1,1)--(2,1);
\draw[-] (2,1)--(3,2);
\draw[-] (3,2)--(4,1);
\draw[-] (4,1)--(5,1);
\draw[-] (5,1)--(6,1);
\draw[-] (6,1)--(7,1);
\draw[-] (7,1)--(8,1);
\draw[-,red,line width=2pt] (1,1)--(3,3);
\draw[-,red,line width=2pt] (3,3)--(6,1);
\draw[-,red,line width=2pt] (2,1)..controls(2,0)and(6.2,-0.2)..(6,1);
\draw[-,red,line width=2pt] (1,1)..controls(1,0.5)and(2,0.5)..(2,1);
\draw[-] (3,2)--(3,3);
\filldraw[red]  (3,3)    circle (2pt);
\filldraw [black]  (0,1)    circle (2pt)
[black]  (1,1)    circle (2pt)
[black]  (2,1)    circle (2pt)
[black]  (3,2)    circle (2pt)
[black]  (4,1)    circle (2pt)
[black]  (5,1)    circle (2pt)
[black]  (6,1)    circle (2pt)
[black]  (7,1)    circle (2pt)
[black]  (8,1)    circle (2pt);
\draw (0,0.6) node {$p_1$};
\draw (0.9,0.6) node {$p_2$};
\draw (2.4,0.9) node {$p_3$};
\draw (2.6,2.1) node {$p_4$};
\draw (4,0.6) node {$p_{5}$};
\draw (5,0.6) node {$p_6$};
\draw (6.3,0.6) node {$p_7$};
\draw (7,0.6) node {$p_8$};
\draw (8,0.6) node {$p_9$};
\draw (3.2,3.2) node {$q$};
\draw (8.5,3) node {};
\end{tikzpicture}

\begin{itemize}
  \item[$\bullet\bullet$] Assume that $(i,j,k)=(4,2,8)$.
\end{itemize}

In this case it doesn't suffice to analyze the connections of a single vertex or the connections of two vertices,
we have to consider the connections of $p_1$, $p_3$ and $p_9$.
By Proposition~\ref{forbidden pairs} there cannot be an edge of $Q$ connecting $p_1$ with one of $\{p_3,p_5,p_7,p_9\}$,
nor an edge of $Q$ connecting $p_3$ with one of $\{p_1,p_5,p_7,p_9\}$,
nor an edge of $Q$ connecting $p_9$ with one of $\{p_1,p_3,p_5,p_7\}$.

We know that $p_3$ cannot be an endpoint of $Q$, since then we could continue $Q$ with the edge $p_3p_4$. Thus $p_3$ must connect with two of
$\{p_2,p_6,p_8\}$. But it cannot connect with $p_2$ and $p_8$, since then we would have a 4-cycle contained in $Q$. So it connects with
$p_6$ and one of $p_2$ or $p_8$.

If $p_3$ connects with $p_2$, then one of $\{p_1,p_9\}$ connects with $p_6$ and the other with $p_8$. Hence, both
$p_1$ and $p_9$ must be endpoints, and in each case, $p_1\sim p_6$, $p_9\sim p_8$ or  $p_1\sim p_8$, $p_9\sim p_6$,
we obtain that $Q$ has length 6, which is a contradiction.

\noindent \begin{tikzpicture}[scale=0.78]
\draw(4,-0.5) node {$p_1$, $p_9$ endpoints, $p_2\sim p_3\Rightarrow Q$ has length 6};
\draw(6,2.3) node {};
\draw[-] (0,1)--(1,1);
\draw[-] (1,1)--(2,1);
\draw[-] (2,1)--(3,2);
\draw[-] (3,2)--(4,1);
\draw[-] (4,1)--(5,1);
\draw[-] (5,1)--(6,1);
\draw[-] (6,1)--(7,1);
\draw[-] (7,1)--(8,1);
\draw[-,red,line width=2pt] (1,1)--(3,3);
\draw[-,red,line width=2pt] (3,3)--(7,1);
\draw[-,red,line width=2pt] (7,1)..controls(7,0.5)and(8,0.5)..(8,1);
\draw[-,red,line width=2pt] (1,1)..controls(1,0.5)and(2,0.5)..(2,1);
\draw[-,red,line width=2pt] (0,1)..controls(0,-0.2)and(5.3,-0.5)..(5,1);
\draw[-,red,line width=2pt] (2,1)..controls(2,0)and(5,0)..(5,1);
\draw[-,line width=1.5pt,dotted] (3,2)--(3,3);
\filldraw[red]  (3,3)    circle (2pt);
\filldraw [black]  (0,1)    circle (2pt)
[black]  (1,1)    circle (2pt)
[black]  (2,1)    circle (2pt)
[black]  (3,2)    circle (2pt)
[black]  (4,1)    circle (2pt)
[black]  (5,1)    circle (2pt)
[black]  (6,1)    circle (2pt)
[black]  (7,1)    circle (2pt)
[black]  (8,1)    circle (2pt);
\draw (-0.2,0.6) node {$p_1$};
\draw (0.8,0.6) node {$p_2$};
\draw (2.4,0.9) node {$p_3$};
\draw (2.6,2.1) node {$p_4$};
\draw (4,0.6) node {$p_{5}$};
\draw (5.3,0.6) node {$p_6$};
\draw (6.1,0.6) node {$p_7$};
\draw (6.9,0.6) node {$p_8$};
\draw (8.2,0.6) node {$p_9$};
\draw (3.2,3.2) node {$q$};
\draw (8.5,3) node {};
\end{tikzpicture}
\begin{tikzpicture}[scale=0.78]
\draw(4,-0.5) node {$p_1$, $p_9$ endpoints, $p_2\sim p_3\Rightarrow Q$ has length 6};
\draw(6,2.3) node {};
\draw[-] (0,1)--(1,1);
\draw[-] (1,1)--(2,1);
\draw[-] (2,1)--(3,2);
\draw[-] (3,2)--(4,1);
\draw[-] (4,1)--(5,1);
\draw[-] (5,1)--(6,1);
\draw[-] (6,1)--(7,1);
\draw[-] (7,1)--(8,1);
\draw[-,red,line width=2pt] (1,1)--(3,3);
\draw[-,red,line width=2pt] (3,3)--(7,1);
\draw[-,red,line width=2pt] (1,1)..controls(1,0.5)and(2,0.5)..(2,1);
\draw[-,red,line width=2pt] (0,1)..controls(0,-0.4)and(7,-0.4)..(7,1);
\draw[-,red,line width=2pt] (2,1)..controls(2,0.1)and(5,0.1)..(5,1);
\draw[-,white,line width=4pt] (5,1)..controls(5,0)and(8,0)..(8,1);
\draw[-,red,line width=2pt] (5,1)..controls(5,0)and(8,0)..(8,1);
\draw[-,line width=1.5pt,dotted] (3,2)--(3,3);
\filldraw[red]  (3,3)    circle (2pt);
\filldraw [black]  (0,1)    circle (2pt)
[black]  (1,1)    circle (2pt)
[black]  (2,1)    circle (2pt)
[black]  (3,2)    circle (2pt)
[black]  (4,1)    circle (2pt)
[black]  (5,1)    circle (2pt)
[black]  (6,1)    circle (2pt)
[black]  (7,1)    circle (2pt)
[black]  (8,1)    circle (2pt);
\draw (-0.2,0.6) node {$p_1$};
\draw (0.8,0.6) node {$p_2$};
\draw (2.4,0.9) node {$p_3$};
\draw (2.6,2.1) node {$p_4$};
\draw (4,0.6) node {$p_{5}$};
\draw (5,1.3) node {$p_6$};
\draw (6.1,0.6) node {$p_7$};
\draw (7.1,1.3) node {$p_8$};
\draw (8.2,0.6) node {$p_9$};
\draw (3.2,3.2) node {$q$};
\draw (8.5,3) node {};
\end{tikzpicture}

If $p_3$ connects with $p_8$, then one of $\{p_1,p_9\}$ connects with $p_2$ and the other with $p_6$. Hence, both
$p_1$ and $p_9$ must be endpoints, and in each case, $p_1\sim p_6$, $p_9\sim p_2$ or  $p_1\sim p_2$, $p_9\sim p_6$,
we obtain that $Q$ has length 6, which is a contradiction.

\noindent \begin{tikzpicture}[scale=0.78]
\draw(4,-0.5) node {$p_1$, $p_9$ endpoints, $p_3\sim p_8\Rightarrow Q$ has length 6};
\draw(6,2.3) node {};
\draw[-] (0,1)--(1,1);
\draw[-] (1,1)--(2,1);
\draw[-] (2,1)--(3,2);
\draw[-] (3,2)--(4,1);
\draw[-] (4,1)--(5,1);
\draw[-] (5,1)--(6,1);
\draw[-] (6,1)--(7,1);
\draw[-] (7,1)--(8,1);
\draw[-,red,line width=2pt] (1,1)--(3,3);
\draw[-,red,line width=2pt] (3,3)--(7,1);
\draw[-,red,line width=2pt] (1,1)..controls(1,-0.5)and(8,-0.5)..(8,1);
\draw[-,red,line width=2pt] (2,1)..controls(2,0)and(7,0)..(7,1);
\draw[-,red,line width=2pt] (2,1)..controls(2,0.5)and(5,0.5)..(5,1);
\draw[-,white,line width=4pt] (0,1)..controls(0,-0.5)and(5.3,-0.5)..(5,1);
\draw[-,red,line width=2pt] (0,1)..controls(0,-0.5)and(5.3,-0.5)..(5,1);
\draw[-,line width=1.5pt,dotted] (3,2)--(3,3);
\filldraw[red]  (3,3)    circle (2pt);
\filldraw [black]  (0,1)    circle (2pt)
[black]  (1,1)    circle (2pt)
[black]  (2,1)    circle (2pt)
[black]  (3,2)    circle (2pt)
[black]  (4,1)    circle (2pt)
[black]  (5,1)    circle (2pt)
[black]  (6,1)    circle (2pt)
[black]  (7,1)    circle (2pt)
[black]  (8,1)    circle (2pt);
\draw (-0.2,0.6) node {$p_1$};
\draw (0.8,0.6) node {$p_2$};
\draw (2.4,0.9) node {$p_3$};
\draw (2.6,2.1) node {$p_4$};
\draw (3.6,0.9) node {$p_{5}$};
\draw (5.3,0.6) node {$p_6$};
\draw (6.1,0.6) node {$p_7$};
\draw (7.1,1.3) node {$p_8$};
\draw (8.2,0.6) node {$p_9$};
\draw (3.2,3.2) node {$q$};
\draw (8.5,3) node {};
\end{tikzpicture}
\begin{tikzpicture}[scale=0.78]
\draw(4,-0.5) node {$p_1$, $p_9$ endpoints, $p_3\sim p_8\Rightarrow Q$ has length 6};
\draw(6,2.3) node {};
\draw[-] (0,1)--(1,1);
\draw[-] (1,1)--(2,1);
\draw[-] (2,1)--(3,2);
\draw[-] (3,2)--(4,1);
\draw[-] (4,1)--(5,1);
\draw[-] (5,1)--(6,1);
\draw[-] (6,1)--(7,1);
\draw[-] (7,1)--(8,1);
\draw[-,red,line width=2pt] (1,1)--(3,3);
\draw[-,red,line width=2pt] (3,3)--(7,1);
\draw[-,red,line width=2pt] (2,1)..controls(2,0)and(7,0)..(7,1);
\draw[-,red,line width=2pt] (2,1)..controls(2,0.5)and(5,0.5)..(5,1);
\draw[-,red,line width=2pt] (0,1)..controls(0,0.5)and(1,0.5)..(1,1);
\draw[-,white,line width=4pt] (5,1)..controls(5,0)and(8,0)..(8,1);
\draw[-,red,line width=2pt] (5,1)..controls(5,0)and(8,0)..(8,1);
\draw[-,line width=1.5pt,dotted] (3,2)--(3,3);
\filldraw[red]  (3,3)    circle (2pt);
\filldraw [black]  (0,1)    circle (2pt)
[black]  (1,1)    circle (2pt)
[black]  (2,1)    circle (2pt)
[black]  (3,2)    circle (2pt)
[black]  (4,1)    circle (2pt)
[black]  (5,1)    circle (2pt)
[black]  (6,1)    circle (2pt)
[black]  (7,1)    circle (2pt)
[black]  (8,1)    circle (2pt);
\draw (-0.2,0.6) node {$p_1$};
\draw (0.8,1.3) node {$p_2$};
\draw (1.7,0.6) node {$p_3$};
\draw (2.6,2.1) node {$p_4$};
\draw (3.6,0.9) node {$p_{5}$};
\draw (5,1.3) node {$p_6$};
\draw (6.1,0.6) node {$p_7$};
\draw (7.1,1.3) node {$p_8$};
\draw (8.2,0.6) node {$p_9$};
\draw (3.2,3.2) node {$q$};
\draw (8.5,3) node {};
\end{tikzpicture}

This discards the case $(i,j,k)=(4,2,8)$.

\begin{itemize}
  \item[$\bullet\bullet$] Assume that $(i,j,k)=(4,6,8)$.
\end{itemize}

In this case it doesn't suffice to analyze the connections of a single vertex or the connections of two vertices,
we have to consider the connections of $p_5$, $p_7$ and $p_9$.
By Proposition~\ref{forbidden pairs} there cannot be an edge of $Q$ connecting $p_5$ with one of $\{p_1,p_3,p_7,p_9\}$,
nor an edge of $Q$ connecting $p_7$ with one of $\{p_1,p_3,p_5,p_9\}$,
nor an edge of $Q$ connecting $p_9$ with one of $\{p_1,p_3,p_5,p_7\}$.

We know that $p_5$ cannot be an endpoint of $Q$, since then we could continue $Q$ with the edge $p_5p_4$. Thus $p_5$ must be connected by an edge of
$Q$ with two of
$\{p_2,p_6,p_8\}$. But it cannot be connected with $p_6$ and $p_8$ at the same time,
 since then we would have a 4-cycle contained in $Q$. So it is connected with
$p_2$ and one of $p_6$ or $p_8$.

If $p_5$ is connected with $p_6$, then one of $\{p_7,p_9\}$ is connected with $p_2$ and the other with $p_8$. Hence, both
$p_7$ and $p_9$ must be endpoints, and in each case, $p_7\sim p_2$, $p_9\sim p_8$ or  $p_7\sim p_8$, $p_9\sim p_2$,
we obtain that $Q$ has length 6, which is a contradiction.

\noindent
\begin{tikzpicture}[scale=0.8]
\draw(6,2.3) node {};
\draw(4,-0.5) node {$p_7$, $p_9$ endpoints, $p_5\sim p_6\Rightarrow Q$ has length 6};%
\draw[-] (0,1)--(1,1);
\draw[-] (1,1)--(2,1);
\draw[-] (2,1)--(3,2);
\draw[-] (3,2)--(4,1);
\draw[-] (4,1)--(5,1);
\draw[-] (5,1)--(6,1);
\draw[-] (6,1)--(7,1);
\draw[-] (7,1)--(8,1);
\draw[-,red,line width=2pt] (5,1)--(6,2);
\draw[-,red,line width=2pt] (6,2)--(7,1);
\draw[-] (3,2)--(6,2);
\draw[-,red,line width=2pt] (1,1)..controls(0.7,-0.3)and(6.3,-0.3)..(6,1);
\draw[-,red,line width=2pt] (1,1)..controls(1,0)and(4,0)..(4,1);
\draw[-,red,line width=2pt] (4,1)..controls(4,0.5)and(5,0.5)..(5,1);
\draw[-,red,line width=2pt] (7,1)..controls(7,0.5)and(8,0.5)..(8,1);
\filldraw[red]  (6,2)    circle (2pt);
\filldraw [black]  (0,1)    circle (2pt)
[black]  (1,1)    circle (2pt)
[black]  (2,1)    circle (2pt)
[black]  (3,2)    circle (2pt)
[black]  (4,1)    circle (2pt)
[black]  (5,1)    circle (2pt)
[black]  (6,1)    circle (2pt)
[black]  (7,1)    circle (2pt)
[black]  (8,1)    circle (2pt);
\draw (0,0.6) node {$p_1$};
\draw (0.7,0.6) node {$p_2$};
\draw (2,0.6) node {$p_3$};
\draw (2.6,2.2) node {$p_4$};
\draw (3.6,0.9) node {$p_{5}$};
\draw (5.2,0.6) node {$p_6$};
\draw (6.3,0.6) node {$p_7$};
\draw (6.9,0.6) node {$p_8$};
\draw (8.2,0.6) node {$p_9$};
\draw (6.2,2.2) node {$q$};
\draw (8.5,3) node {};
\end{tikzpicture}
\begin{tikzpicture}[scale=0.8]
\draw(6,2.3) node {};
\draw(4,-0.5) node {$p_7$, $p_9$ endpoints, $p_5\sim p_6\Rightarrow Q$ has length 6};%
\draw[-] (0,1)--(1,1);
\draw[-] (1,1)--(2,1);
\draw[-] (2,1)--(3,2);
\draw[-] (3,2)--(4,1);
\draw[-] (4,1)--(5,1);
\draw[-] (5,1)--(6,1);
\draw[-] (6,1)--(7,1);
\draw[-] (7,1)--(8,1);
\draw[-,red,line width=2pt] (5,1)--(6,2);
\draw[-,red,line width=2pt] (6,2)--(7,1);
\draw[-] (3,2)--(6,2);
\draw[-,red,line width=2pt] (1,1)..controls(0.7,-0.4)and(8.3,-0.3)..(8,1);
\draw[-,red,line width=2pt] (1,1)..controls(1,0.2)and(4,0.2)..(4,1);
\draw[-,red,line width=2pt] (4,1)..controls(4,0.5)and(5,0.5)..(5,1);
\draw[-,red,line width=2pt] (6,1)..controls(6,0.5)and(7,0.5)..(7,1);
\filldraw[red]  (6,2)    circle (2pt);
\filldraw [black]  (0,1)    circle (2pt)
[black]  (1,1)    circle (2pt)
[black]  (2,1)    circle (2pt)
[black]  (3,2)    circle (2pt)
[black]  (4,1)    circle (2pt)
[black]  (5,1)    circle (2pt)
[black]  (6,1)    circle (2pt)
[black]  (7,1)    circle (2pt)
[black]  (8,1)    circle (2pt);
\draw (0,0.6) node {$p_1$};
\draw (0.7,0.6) node {$p_2$};
\draw (2,0.7) node {$p_3$};
\draw (2.6,2.2) node {$p_4$};
\draw (3.6,0.9) node {$p_{5}$};
\draw (5.2,0.6) node {$p_6$};
\draw (5.8,0.6) node {$p_7$};
\draw (7.2,0.6) node {$p_8$};
\draw (8.3,0.6) node {$p_9$};
\draw (6.2,2.2) node {$q$};
\draw (8.5,3) node {};
\end{tikzpicture}

If $p_5$ is connected with $p_8$, then one of $\{p_7,p_9\}$ is connected with $p_2$ and the other with $p_6$. Hence, both
$p_7$ and $p_9$ must be endpoints, and in each case, $p_7\sim p_2$, $p_9\sim p_6$ or  $p_7\sim p_6$, $p_9\sim p_2$,
we obtain that $Q$ has length 6, which is a contradiction.

\noindent
\begin{tikzpicture}[scale=0.8]
\draw(6,2.3) node {};
\draw(4,-0.5) node {$p_7$, $p_9$ endpoints, $p_5\sim p_8\Rightarrow Q$ has length 6};%
\draw[-] (0,1)--(1,1);
\draw[-] (1,1)--(2,1);
\draw[-] (2,1)--(3,2);
\draw[-] (3,2)--(4,1);
\draw[-] (4,1)--(5,1);
\draw[-] (5,1)--(6,1);
\draw[-] (6,1)--(7,1);
\draw[-] (7,1)--(8,1);
\draw[-,red,line width=2pt] (5,1)--(6,2);
\draw[-,red,line width=2pt] (6,2)--(7,1);
\draw[-] (3,2)--(6,2);
\draw[-,red,line width=2pt] (1,1)..controls(1,0)and(4,0)..(4,1);
\draw[-,red,line width=2pt] (4,1)..controls(4,0)and(7,0)..(7,1);
\draw[-,white,line width=4pt] (5,1)..controls(5,0)and(8,0)..(8,1);
\draw[-,red,line width=2pt] (5,1)..controls(5,0)and(8,0)..(8,1);
\draw[-,white,line width=4pt] (1,1)..controls(0.7,-0.3)and(6.3,-0.3)..(6,1);
\draw[-,red,line width=2pt] (1,1)..controls(0.7,-0.3)and(6.3,-0.3)..(6,1);
\filldraw[red]  (6,2)    circle (2pt);
\filldraw [black]  (0,1)    circle (2pt)
[black]  (1,1)    circle (2pt)
[black]  (2,1)    circle (2pt)
[black]  (3,2)    circle (2pt)
[black]  (4,1)    circle (2pt)
[black]  (5,1)    circle (2pt)
[black]  (6,1)    circle (2pt)
[black]  (7,1)    circle (2pt)
[black]  (8,1)    circle (2pt);
\draw (0,0.6) node {$p_1$};
\draw (0.7,0.6) node {$p_2$};
\draw (2,0.6) node {$p_3$};
\draw (2.6,2.2) node {$p_4$};
\draw (3.6,0.9) node {$p_{5}$};
\draw (4.7,0.6) node {$p_6$};
\draw (6.3,0.6) node {$p_7$};
\draw (7.3,0.6) node {$p_8$};
\draw (8.2,0.6) node {$p_9$};
\draw (6.2,2.2) node {$q$};
\draw (8.5,3) node {};
\end{tikzpicture}
\begin{tikzpicture}[scale=0.8]
\draw(6,2.3) node {};
\draw(4,-0.5) node {$p_7$, $p_9$ endpoints, $p_5\sim p_8\Rightarrow Q$ has length 6};%
\draw[-] (0,1)--(1,1);
\draw[-] (1,1)--(2,1);
\draw[-] (2,1)--(3,2);
\draw[-] (3,2)--(4,1);
\draw[-] (4,1)--(5,1);
\draw[-] (5,1)--(6,1);
\draw[-] (6,1)--(7,1);
\draw[-] (7,1)--(8,1);
\draw[-,red,line width=2pt] (5,1)--(6,2);
\draw[-,red,line width=2pt] (6,2)--(7,1);
\draw[-] (3,2)--(6,2);
\draw[-,red,line width=2pt] (1,1)..controls(0.7,-0.4)and(8.3,-0.3)..(8,1);
\draw[-,red,line width=2pt] (1,1)..controls(1,0.2)and(4,0.2)..(4,1);
\draw[-,red,line width=2pt] (5,1)..controls(5,0.5)and(6,0.5)..(6,1);
\draw[-,red,line width=2pt] (4,1)..controls(4,0.1)and(7,0.1)..(7,1);
\filldraw[red]  (6,2)    circle (2pt);
\filldraw [black]  (0,1)    circle (2pt)
[black]  (1,1)    circle (2pt)
[black]  (2,1)    circle (2pt)
[black]  (3,2)    circle (2pt)
[black]  (4,1)    circle (2pt)
[black]  (5,1)    circle (2pt)
[black]  (6,1)    circle (2pt)
[black]  (7,1)    circle (2pt)
[black]  (8,1)    circle (2pt);
\draw (0,0.6) node {$p_1$};
\draw (0.7,0.6) node {$p_2$};
\draw (2,0.7) node {$p_3$};
\draw (2.6,2.2) node {$p_4$};
\draw (3.6,0.9) node {$p_{5}$};
\draw (4.8,0.7) node {$p_6$};
\draw (6.2,0.7) node {$p_7$};
\draw (7.2,0.6) node {$p_8$};
\draw (8.3,0.6) node {$p_9$};
\draw (6.2,2.2) node {$q$};
\draw (8.5,3) node {};
\end{tikzpicture}

This discards the case $(i,j,k)=(4,6,8)$ and finishes the case $i=4$.

\begin{itemize}
  \item Assume that $i=5$.
\end{itemize}

By Remark~\ref{remark distancia minima} and assumption we know that $1<j<k<n-1=9$ and $|i-j|,|k-i|,|j-k|>1$.
The case $i<j$ and $k<i$ are impossible, hence $j<i<k$, and we have four cases
$$
(i,j,k)\in\{(5,2,7),(5,2,8),(5,3,7),(5,3,8)\}.
$$
But the cases $(i,j,k)=(5,2,7)$ and $(i,j,k)=(5,3,8)$ are symmetric, so we have to discard only the first three cases.

\noindent \begin{tikzpicture}[scale=0.5]
  \draw(3.5,-0.5) node {$(i,j,k)=(5,2,7)$};
\draw(6,2.3) node {};
\draw[-] (0,1)--(1,1);
\draw[-] (1,1)--(2,1);
\draw[-] (2,1)--(3,1);
\draw[-] (3,1)--(4,2);
\draw[-] (4,2)--(5,1);
\draw[-] (5,1)--(6,1);
\draw[-] (6,1)--(7,1);
\draw[-] (7,1)--(8,1);
\draw[-,red] (1,1)--(4,3);
\draw[-,red] (4,3)--(6,1);
\draw[-,line width=1.5pt,dotted] (4,2)--(4,3);
\filldraw[red]  (4,3)    circle (2pt);
\filldraw [black]  (0,1)    circle (2pt)
[black]  (1,1)    circle (2pt)
[black]  (2,1)    circle (2pt)
[black]  (3,1)    circle (2pt)
[black]  (4,2)    circle (2pt)
[black]  (5,1)    circle (2pt)
[black]  (6,1)    circle (2pt)
[black]  (7,1)    circle (2pt)
[black]  (8,1)    circle (2pt);
\draw (0,0.6) node {$p_1$};
\draw (1,0.6) node {$p_2$};
\draw (2,0.6) node {$p_3$};
\draw (3,0.6) node {$p_4$};
\draw (3.6,2.1) node {$p_{5}$};
\draw (5,0.6) node {$p_6$};
\draw (6,0.6) node {$p_7$};
\draw (7,0.6) node {$p_8$};
\draw (8,0.6) node {$p_9$};
\draw (4.3,3.2) node {$q$};
\draw (8.5,3) node {};
\end{tikzpicture}
\begin{tikzpicture}[scale=0.5]
  \draw(3.5,-0.5) node {$(i,j,k)=(5,2,8)$};
\draw(6,2.3) node {};
\draw[-] (0,1)--(1,1);
\draw[-] (1,1)--(2,1);
\draw[-] (2,1)--(3,1);
\draw[-] (3,1)--(4,2);
\draw[-] (4,2)--(5,1);
\draw[-] (5,1)--(6,1);
\draw[-] (6,1)--(7,1);
\draw[-] (7,1)--(8,1);
\draw[-,red] (1,1)--(4,3);
\draw[-,red] (4,3)--(7,1);
\draw[-,line width=1.5pt,dotted] (4,2)--(4,3);
\filldraw[red]  (4,3)    circle (2pt);
\filldraw [black]  (0,1)    circle (2pt)
[black]  (1,1)    circle (2pt)
[black]  (2,1)    circle (2pt)
[black]  (3,1)    circle (2pt)
[black]  (4,2)    circle (2pt)
[black]  (5,1)    circle (2pt)
[black]  (6,1)    circle (2pt)
[black]  (7,1)    circle (2pt)
[black]  (8,1)    circle (2pt);
\draw (0,0.6) node {$p_1$};
\draw (1,0.6) node {$p_2$};
\draw (2,0.6) node {$p_3$};
\draw (3,0.6) node {$p_4$};
\draw (3.6,2.1) node {$p_{5}$};
\draw (5,0.6) node {$p_6$};
\draw (6,0.6) node {$p_7$};
\draw (7,0.6) node {$p_8$};
\draw (8,0.6) node {$p_9$};
\draw (4.3,3.2) node {$q$};
\draw (8.5,3) node {};
\end{tikzpicture}
\begin{tikzpicture}[scale=0.5]
  \draw(3.5,-0.5) node {$(i,j,k)=(5,3,7)$};
\draw(6,2.3) node {};
\draw[-] (0,1)--(1,1);
\draw[-] (1,1)--(2,1);
\draw[-] (2,1)--(3,1);
\draw[-] (3,1)--(4,2);
\draw[-] (4,2)--(5,1);
\draw[-] (5,1)--(6,1);
\draw[-] (6,1)--(7,1);
\draw[-] (7,1)--(8,1);
\draw[-,red] (2,1)--(4,3);
\draw[-,red] (4,3)--(6,1);
\draw[-,line width=1.5pt,dotted] (4,2)--(4,3);
\filldraw[red]  (4,3)    circle (2pt);
\filldraw [black]  (0,1)    circle (2pt)
[black]  (1,1)    circle (2pt)
[black]  (2,1)    circle (2pt)
[black]  (3,1)    circle (2pt)
[black]  (4,2)    circle (2pt)
[black]  (5,1)    circle (2pt)
[black]  (6,1)    circle (2pt)
[black]  (7,1)    circle (2pt)
[black]  (8,1)    circle (2pt);
\draw (0,0.6) node {$p_1$};
\draw (1,0.6) node {$p_2$};
\draw (2,0.6) node {$p_3$};
\draw (3,0.6) node {$p_4$};
\draw (3.6,2.1) node {$p_{5}$};
\draw (5,0.6) node {$p_6$};
\draw (6,0.6) node {$p_7$};
\draw (7,0.6) node {$p_8$};
\draw (8,0.6) node {$p_9$};
\draw (4.3,3.2) node {$q$};
\draw (8.5,3) node {};
\end{tikzpicture}

\begin{itemize}
  \item[$\bullet\bullet$] Assume that $(i,j,k)=(5,2,7)$.
\end{itemize}

We know that $p_6$ cannot be an endpoint of $Q$, since then we could continue $Q$ with the edge $p_6p_5$.
  Thus two edges of the path $Q$ connect $p_6$ directly with two vertices in the set
  $$
  (V(P)\cap V(Q))\setminus\{p_3\}=\{p_1,p_2,p_3,p_4,p_7,p_8,p_9\}.
  $$
  By Proposition~\ref{forbidden pairs} an edge of $Q$ cannot connect $p_6=p_{i+1}$ with $p_1$, $p_3=p_{j+1}$, $p_4=p_{i-1}$,
  $p_8=p_{k+1}$ nor $p_9=p_{n-1}$.

Thus two edges of the path $Q$ connect $p_6$ with $p_2$ and $p_6$ with $p_7$, which implies that $Q$ contains the 4-cycle $p_2 p_6  p_7 q p_2$,
a contradiction that discards
the case $i=5$, $j=2$ and $k=7$.

\begin{tikzpicture}[scale=0.8]
  \draw(4,-0.5) node {$Q$ contains a 4-cycle};
\draw(6,2.3) node {};
\draw[-] (0,1)--(1,1);
\draw[-] (1,1)--(2,1);
\draw[-] (2,1)--(3,1);
\draw[-] (3,1)--(4,2);
\draw[-] (4,2)--(5,1);
\draw[-] (5,1)--(6,1);
\draw[-] (6,1)--(7,1);
\draw[-] (7,1)--(8,1);
\draw[-,red,line width=2pt] (1,1)--(4,3);
\draw[-,red,line width=2pt] (4,3)--(6,1);
\draw[-,line width=1.5,dotted] (4,2)--(4,3);
\draw[-,red,line width=2pt] (1,1)..controls(1,0)and(5,0)..(5,1);
\draw[-,red,line width=2pt] (5,1)..controls(5,0.5)and(6,0.5)..(6,1);
\filldraw[red]  (4,3)    circle (2pt);
\filldraw [black]  (0,1)    circle (2pt)
[black]  (1,1)    circle (2pt)
[black]  (2,1)    circle (2pt)
[black]  (3,1)    circle (2pt)
[black]  (4,2)    circle (2pt)
[black]  (5,1)    circle (2pt)
[black]  (6,1)    circle (2pt)
[black]  (7,1)    circle (2pt)
[black]  (8,1)    circle (2pt);
\draw (0,0.6) node {$p_1$};
\draw (0.7,0.6) node {$p_2$};
\draw (2,0.6) node {$p_3$};
\draw (3,0.6) node {$p_4$};
\draw (3.6,2.1) node {$p_{5}$};
\draw (4.6,0.9) node {$p_6$};
\draw (6.3,0.6) node {$p_7$};
\draw (7,0.6) node {$p_8$};
\draw (8,0.6) node {$p_9$};
\draw (4.2,3.2) node {$q$};
\draw (8.5,3) node {};
\end{tikzpicture}

\begin{itemize}
  \item[$\bullet\bullet$] Assume that $(i,j,k)=(5,3,7)$.
\end{itemize}

We know that $p_6$ cannot be an endpoint of $Q$, since then we could continue $Q$ with the edge $p_6p_5$.
  Thus two edges of the path $Q$ connect $p_6$ directly with two vertices in the set
  $$
  (V(P)\cap V(Q))\setminus\{p_3\}=\{p_1,p_2,p_3,p_4,p_7,p_8,p_9\}.
  $$
 Proposition~\ref{forbidden pairs}  shows that
 an edge of $Q$ cannot connect $p_6=p_{i+1}$ with $p_1$, $p_4=p_{i-1}$,
  $p_8=p_{k+1}$ nor $p_9=p_{n-1}$;
and the Hamiltonian path in the first diagram shows that
 an edge of $Q$ cannot connect $p_6$ with $p_2$.

\noindent \begin{tikzpicture}[scale=0.8]
  \draw(4,-0.5) node {$Q$ can't connect $p_2$ with $p_6$};
\draw(6,2.3) node {};
\draw[-,line width=2pt,cyan] (0,1)--(1,1);
\draw[-] (1,1)--(2,1);
\draw[-,line width=2pt,cyan] (2,1)--(3,1);
\draw[-,line width=2pt,cyan] (3,1)--(4,2);
\draw[-,line width=2pt,cyan] (4,2)--(5,1);
\draw[-] (5,1)--(6,1);
\draw[-,line width=2pt,cyan] (6,1)--(7,1);
\draw[-,line width=2pt,cyan] (7,1)--(8,1);
\draw[-,line width=2pt,cyan] (2,1)--(4,3);
\draw[-,line width=2pt,cyan] (4,3)--(6,1);
\draw[-,line width=1.5pt,dotted] (4,2)--(4,3);
\draw[-,line width=2pt,cyan] (1,1)..controls(1,-0.2)and(5,0)..(5,1);
\filldraw[red]  (4,3)    circle (2pt);
\filldraw [black]  (0,1)    circle (2pt)
[black]  (1,1)    circle (2pt)
[black]  (2,1)    circle (2pt)
[black]  (3,1)    circle (2pt)
[black]  (4,2)    circle (2pt)
[black]  (5,1)    circle (2pt)
[black]  (6,1)    circle (2pt)
[black]  (7,1)    circle (2pt)
[black]  (8,1)    circle (2pt);
\draw (0,0.6) node {$p_1$};
\draw (0.7,0.6) node {$p_2$};
\draw (2,0.6) node {$p_3$};
\draw (3,0.6) node {$p_4$};
\draw (3.6,2.1) node {$p_{5}$};
\draw (5.3,0.6) node {$p_6$};
\draw (6,0.6) node {$p_7$};
\draw (7,0.6) node {$p_8$};
\draw (8,0.6) node {$p_9$};
\draw (4.2,3.2) node {$q$};
\draw (8.5,3) node {};
\end{tikzpicture}
\begin{tikzpicture}[scale=0.8]
  \draw(4,-0.5) node {$Q$ contains a 4-cycle};
\draw(6,2.3) node {};
\draw[-] (0,1)--(1,1);
\draw[-] (1,1)--(2,1);
\draw[-] (2,1)--(3,1);
\draw[-] (3,1)--(4,2);
\draw[-] (4,2)--(5,1);
\draw[-] (5,1)--(6,1);
\draw[-] (6,1)--(7,1);
\draw[-] (7,1)--(8,1);
\draw[-,red,line width=2pt] (2,1)--(4,3);
\draw[-,red,line width=2pt] (4,3)--(6,1);
\draw[-,line width=1.5,dotted] (4,2)--(4,3);
\draw[-,red,line width=2pt] (2,1)..controls(2,0)and(5,0)..(5,1);
\draw[-,red,line width=2pt] (5,1)..controls(5,0.5)and(6,0.5)..(6,1);
\filldraw[red]  (4,3)    circle (2pt);
\filldraw [black]  (0,1)    circle (2pt)
[black]  (1,1)    circle (2pt)
[black]  (2,1)    circle (2pt)
[black]  (3,1)    circle (2pt)
[black]  (4,2)    circle (2pt)
[black]  (5,1)    circle (2pt)
[black]  (6,1)    circle (2pt)
[black]  (7,1)    circle (2pt)
[black]  (8,1)    circle (2pt);
\draw (0,0.6) node {$p_1$};
\draw (1,0.6) node {$p_2$};
\draw (1.7,0.6) node {$p_3$};
\draw (3,0.6) node {$p_4$};
\draw (3.6,2.1) node {$p_{5}$};
\draw (4.6,0.9) node {$p_6$};
\draw (6.3,0.6) node {$p_7$};
\draw (7,0.6) node {$p_8$};
\draw (8,0.6) node {$p_9$};
\draw (4.2,3.2) node {$q$};
\draw (8.5,3) node {};
\end{tikzpicture}

Thus two edges of the path $Q$ connect $p_6$ with $p_3$ and $p_6$ with $p_7$, which implies that $Q$ contains the 4-cycle $p_3 p_6  p_7 q p_3$,
a contradiction that discards
the case $i=5$, $j=3$ and $k=7$.

\begin{itemize}
  \item[$\bullet\bullet$] Assume that $(i,j,k)=(5,2,8)$.
\end{itemize}

In this case we consider the connections of $p_1$ and $p_9$. By Proposition~\ref{forbidden pairs}
an edge of $Q$ cannot connect $p_1$ with one of $\{p_3,p_4,p_6,p_7,p_9\}$,
nor can it connect $p_9$ with one of $\{p_1,p_3,p_4,p_6,p_7\}$.

Thus $p_1$ can be connected only to $p_2$ or $p_8$; and similarly $p_9$ can be connected only to $p_2$ or $p_8$.
Hence, one of $\{p_1,p_9\}$ is connected with $p_2$ and the other with $p_8$ and both
$p_1$ and $p_9$ must be endpoints. In each case, $p_1\sim p_2$, $p_9\sim p_8$ or  $p_1\sim p_8$, $p_9\sim p_2$,
we obtain that $Q$ has length 4, which is a contradiction.

\noindent \begin{tikzpicture}[scale=0.8]
  \draw(4,-0.5) node {$p_1$, $p_9$ endpoints $\Rightarrow Q$ has length 4};
\draw(6,2.3) node {};
\draw[-] (0,1)--(1,1);
\draw[-] (1,1)--(2,1);
\draw[-] (2,1)--(3,1);
\draw[-] (3,1)--(4,2);
\draw[-] (4,2)--(5,1);
\draw[-] (5,1)--(6,1);
\draw[-] (6,1)--(7,1);
\draw[-] (7,1)--(8,1);
\draw[-,red,line width=2pt] (1,1)--(4,3);
\draw[-,red,line width=2pt] (4,3)--(7,1);
\draw[-,dotted,line width=1.5pt] (4,2)--(4,3);
\draw[-,red,line width=2pt] (0,1)..controls(0,0.5)and(1,0.5)..(1,1);
\draw[-,red,line width=2pt] (7,1)..controls(7,0.5)and(8,0.5)..(8,1);
\filldraw[red]  (4,3)    circle (2pt);
\filldraw [black]  (0,1)    circle (2pt)
[black]  (1,1)    circle (2pt)
[black]  (2,1)    circle (2pt)
[black]  (3,1)    circle (2pt)
[black]  (4,2)    circle (2pt)
[black]  (5,1)    circle (2pt)
[black]  (6,1)    circle (2pt)
[black]  (7,1)    circle (2pt)
[black]  (8,1)    circle (2pt);
\draw (0,1.3) node {$p_1$};
\draw (1.2,0.6) node {$p_2$};
\draw (2.3,0.6) node {$p_3$};
\draw (3.1,0.6) node {$p_4$};
\draw (3.6,2.1) node {$p_{5}$};
\draw (5,0.6) node {$p_6$};
\draw (6,0.6) node {$p_7$};
\draw (6.8,0.6) node {$p_8$};
\draw (8.3,0.6) node {$p_9$};
\draw (4.3,3.2) node {$q$};
\draw (8.5,3) node {};
\end{tikzpicture}
\begin{tikzpicture}[scale=0.8]
  \draw(4,-0.5) node {$p_1$, $p_9$ endpoints $\Rightarrow Q$ has length 4};
\draw(6,2.3) node {};
\draw[-] (0,1)--(1,1);
\draw[-] (1,1)--(2,1);
\draw[-] (2,1)--(3,1);
\draw[-] (3,1)--(4,2);
\draw[-] (4,2)--(5,1);
\draw[-] (5,1)--(6,1);
\draw[-] (6,1)--(7,1);
\draw[-] (7,1)--(8,1);
\draw[-,red,line width=2pt] (1,1)--(4,3);
\draw[-,red,line width=2pt] (4,3)--(7,1);
\draw[-,dotted,line width=1.5pt] (4,2)--(4,3);
\draw[-,red,line width=2pt] (0,1)..controls(0,-0.5)and(7,0.2)..(7,1);
\draw[-,white,line width=4pt] (1,1)..controls(1,0.2)and(8,-0.5)..(8,1);
\draw[-,red,line width=2pt] (1,1)..controls(1,0.2)and(8,-0.5)..(8,1);
\filldraw[red]  (4,3)    circle (2pt);
\filldraw [black]  (0,1)    circle (2pt)
[black]  (1,1)    circle (2pt)
[black]  (2,1)    circle (2pt)
[black]  (3,1)    circle (2pt)
[black]  (4,2)    circle (2pt)
[black]  (5,1)    circle (2pt)
[black]  (6,1)    circle (2pt)
[black]  (7,1)    circle (2pt)
[black]  (8,1)    circle (2pt);
\draw (0,1.3) node {$p_1$};
\draw (0.8,0.6) node {$p_2$};
\draw (2.3,0.7) node {$p_3$};
\draw (3.1,0.6) node {$p_4$};
\draw (3.6,2.1) node {$p_{5}$};
\draw (5,0.6) node {$p_6$};
\draw (5.9,0.7) node {$p_7$};
\draw (7.2,0.6) node {$p_8$};
\draw (8.3,0.6) node {$p_9$};
\draw (4.3,3.2) node {$q$};
\draw (8.5,3) node {};
\end{tikzpicture}

This contradiction discards the case $(i,j,k)=(5,2,8)$, finishing the case $i=5$ and thus we have proved that $\ell=8$ and $n=10$ is impossible.

\section{The case $\ell=6$ and $n=10$}
\label{seccion 6 mas 4}
In this section we will discard the case $\ell=6$ and $n=10$. We will assume that there exists a minimal graph $G$ with two
longest paths $P$ and $Q$, such that $|V(P)\cap V(Q)|=6$, $n(G)=10$ and so $|V(P)|=|V(Q)|=8$, and we will show that this is impossible.

For any set of vertices $A$ in $G$, we denote by $N(A)$ the set of neighbors of vertices in $A$,
that is $N(A)=\{u \in V(G): \mbox{there exists a neighbor of }u \mbox{ in } A\}$.

Let \begin{align*}
     & P'=V(P) \setminus V(Q), && Q'=V(Q) \setminus V(P), & S=V(P) \cap V(Q), \\
     & P''=S \cap N(P'), && Q''=S \cap N(Q').  &
    \end{align*}
Then $|S|=6$, and $|P'|=|Q'|=2$.

\begin{proposition}
We may assume that both $P$ and $Q$ have no endpoints in $P'$ and $Q'$ respectively.
\end{proposition}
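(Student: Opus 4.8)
The plan is to prove the statement for $P$ and $P'$ only, the argument for $Q$ and $Q'$ being identical by symmetry. The key principle is that a \emph{rotation} (lollipop move) replaces $P$ by another longest path $\widetilde P$ with $V(\widetilde P)=V(P)$, hence with the same intersection $S=V(P)\cap V(Q)$, so that $|S|=6$, $V(\widetilde P)\ne V(Q)$, and the connectivity of $G\setminus S$ are all preserved. Thus I would phrase the claim as: whenever an endpoint of $P$ lies in $P'$, one may pass to another longest path with the same $S$ and strictly fewer private endpoints; iterating yields a longest path with no endpoint in its private part. So suppose an endpoint $a$ of $P$ lies in $P'$ and write $P=a\,p_2\,p_3\cdots p_8$.

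The first step is the extension argument used in the introduction: if $a$ were adjacent to some $w\in Q'\subseteq V(G)\setminus V(P)$, then $w\,a\,p_2\cdots p_8$ would be a path on $9$ vertices, contradicting maximality of $P$; hence $a$ has no neighbour in $Q'$. Since $G\setminus S$ is the graph induced on the four vertices $P'\cup Q'$ and must be connected, $a$ cannot be isolated there, so its only admissible neighbour is the other vertex $b$ of $P'$; thus $P'=\{a,b\}$ with $a\sim b$. Applying the same to the opposite endpoint rules out both endpoints of $P$ being private, for that would leave $P'$ with no edge to $Q'$ and disconnect $G\setminus S$.

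The second step is the rotation itself. As $P$ is longest, every neighbour of $a$ lies on $P$. If $a$ has a neighbour $p_j$ with $p_{j-1}\in S$, the rotated path $p_{j-1}p_{j-2}\cdots a\,p_j\,p_{j+1}\cdots p_8$ is again longest, has the same vertex set, and its new endpoint $p_{j-1}$ lies in $S$; since $b\ne p_8$, the other endpoint $p_8$ also lies in $S$, and we are done. Because $b$ is the only private vertex besides $a$, this manoeuvre fails only when $b=p_2$ and $N(a)\subseteq\{p_2,p_3\}$, i.e.\ the two private vertices sit consecutively at the extreme end of $P$.

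This degenerate configuration is the main obstacle. Here $b=p_2$ must carry the whole $P'$--$Q'$ connection, so $b\sim w$ for some $w\in Q'$. If $a\sim p_3$, one rotation produces the longest path $b\,a\,p_3\cdots p_8$ with endpoint $b$, and prepending $w$ gives a $9$-vertex path, a contradiction. In the remaining pendant sub-case $N(a)=\{p_2\}$, I would reroute along $b\sim w$: the path $p_8 p_7\cdots p_3\,b\,w$ is a longest path $P^*$ with $V(P^*)\cap V(Q)=S\cup\{w\}$, giving a configuration with $|V(P^*)\setminus V(Q)|=|V(Q)\setminus V(P^*)|=1$ and $(\ell,n)=(7,10)$; by the reduction of the introduction (deleting the unused vertex $a$) this collapses to the already-excluded case $\ell=7$, $n=9$, provided $G\setminus(S\cup\{w\})$ is connected, and the few remaining adjacency patterns of $\{a,b\}$ against $Q'$ are disposed of by one more extension argument. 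I expect precisely this pendant sub-case, with its separate treatment of how $b$ and $w$ attach to the second vertex of $Q'$, to be the delicate point; everything else reduces cleanly to a single rotation. Finally the whole argument is repeated for the second endpoint of $P$ and, verbatim, for both endpoints of $Q$.
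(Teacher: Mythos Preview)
Your approach is correct and essentially the same as the paper's. Both arguments first observe that a private endpoint $a$ cannot see $Q'$ (else $P$ extends), hence $a\sim b$ with $P'=\{a,b\}$; both then rotate along the chord $ab$ when $b$ is not the vertex of $P$ adjacent to $a$, and in the remaining case $b=p_2$ both pass to a smaller counterexample. In fact your $P^*=p_8\cdots p_3\,b\,w$ is exactly the paper's $\hat P=P-pp_1+p_1q_1$ read backwards, and your deletion of $a$ via the reduction of the introduction is exactly the paper's passage to $\hat G=G-p$, invoking minimality.

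Two minor remarks. First, your extra sub-case $a\sim p_3$ (rotation then extension via $w$) is valid but unnecessary: the paper treats the whole case $b=p_2$ uniformly by the reduction, without caring about further neighbours of $a$. Second, the ``one more extension argument'' you defer is precisely what the paper writes out: before reducing one must know $q\sim b$, so that $\{a,b,q\}=G\setminus(S\cup\{w\})$ is connected. Since $q\not\sim a$, and $q\sim w$ would give the $9$-vertex path $p_8\cdots p_3\,b\,w\,q$, connectivity of $G\setminus S$ forces $q\sim b$. This is a one-line check, but it is the hinge of the pendant case and should appear explicitly rather than be flagged as expected.
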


\begin{proof}
Suppose for a moment that $P$ has one extreme in $P'$, and let $p$ be that extreme.
As $P' \cup	 Q'$ is connected, $p$ has at
least one neighbor in $P' \cup Q'$. If such a neighbor is in $Q'$, then we can add this vertex to $P$, obtaining a path longer than $P$, a contradiction. Hence,
$p$ has a neighbor, say $p_1$ in $P'$.
Thus the graph induced by $P'$ contains an edge $pp_1$.

Suppose for a moment that $pp_1 \notin E(P)$.
Let $r$ be the neighbor of $p_1$ in $P$, closest to $p$ in $P$.
Then $P+pp_1-p_1r$ is a path with the same vertex set as $P$ with its two extremes in $S$, and we are in another case.

\begin{tikzpicture}[scale=0.8]
\draw(6,2.3) node {};
\draw[-] (0,2)--(1,1);
\draw[-] (1,1)--(3,1);
\draw[-] (3,1)--(4,2);
\draw[-] (4,2)--(5,1);
\draw[-] (5,1)--(7,1);
\draw[-,line width=1.5pt,dotted] (0,2)--(4,2);
\filldraw [black]  (0,2)    circle (2pt)
[black]  (1,1)    circle (2pt)
[black]  (3,1)    circle (2pt)
[black]  (4,2)    circle (2pt)
[black]  (5,1)    circle (2pt)
[black]  (7,1)    circle (2pt);
\draw (0,2.3) node {$p$};
\draw (3,0.6) node {$r$};
\draw (4.4,2.1) node {$p_1$};
\draw (8.5,0) node {};
\end{tikzpicture}
\begin{tikzpicture}[scale=0.8]
\draw(3.5,0.1) node {In $P+pp_1-p_1r$ the endpoints are not in $P'$};
\draw[-,line width=2pt,cyan] (0,2)--(1,1);
\draw[-,line width=2pt,cyan] (1,1)--(3,1);
\draw[-] (3,1)--(4,2);
\draw[-,line width=2pt,cyan] (4,2)--(5,1);
\draw[-,line width=2pt,cyan] (5,1)--(7,1);
\draw[-,line width=2pt,cyan] (0,2)--(4,2);
\filldraw [black]  (0,2)    circle (2pt)
[black]  (1,1)    circle (2pt)
[black]  (3,1)    circle (2pt)
[black]  (4,2)    circle (2pt)
[black]  (5,1)    circle (2pt)
[black]  (7,1)    circle (2pt);
\draw (0,2.3) node {$p$};
\draw (3,0.6) node {$r$};
\draw (4.4,2.1) node {$p_1$};
\draw (8.5,3) node {};
\end{tikzpicture}

Hence, we may assume that $pp_1 \in E(P)$.

Since $P'\cap Q'$ is connected, $p_1$ has a
 neighbor $q_1$ in $Q'$. Let $q$ be the other vertex of $Q'$.
If $q$ is adjacent to $p$ or $q_1$ then we can augment the path $P$. Hence, $qp_1$ is an edge. Thus,
the graph induced by $P' \cup	Q'$ is formed by the edges $pp_1$,
$p_1q_1$ and $p_1q$ (Figure).

\begin{tikzpicture}[scale=0.8]
\draw(6,0) node {};
\draw[-] (0,2)--(1,2);
\draw[-] (1,2)--(2,1);
\draw[-] (2,1)--(7,1);
\draw[-,line width=1.5pt,dotted] (1,2)--(2,2);
\draw[-,line width=1.5pt,dotted] (1,2)..controls (2,2.3)..(3,2);
\filldraw [black]  (0,2)    circle (2pt)
[black]  (1,2)    circle (2pt)
[black]  (2,1)    circle (2pt)
[black]  (7,1)    circle (2pt);
\filldraw[red]  (2,2)    circle (2pt)
[red]  (3,2)    circle (2pt);

\draw (0,2.3) node {$p$};
\draw (1,2.3) node {$p_1$};
\draw (2,1.6) node {$q_1$};
\draw (3,1.6) node {$q$};
\draw (8.5,3) node {};
\end{tikzpicture}
\begin{tikzpicture}[scale=0.8]
\draw(3.5,0.1) node {$\widehat{P}$ in blue};
\draw[-] (0,2)--(1,2);
\draw[-,line width=2pt,cyan] (1,2)--(2,1);
\draw[-,line width=2pt,cyan] (2,1)--(7,1);
\draw[-,line width=2pt,cyan] (1,2)--(2,2);
\draw[-,line width=1.5pt,dotted] (1,2)..controls (2,2.3)..(3,2);
\filldraw [black]  (0,2)    circle (2pt)
[black]  (1,2)    circle (2pt)
[black]  (2,1)    circle (2pt)
[black]  (7,1)    circle (2pt);
\filldraw[red]  (2,2)    circle (2pt)
[red]  (3,2)    circle (2pt);

\draw (0,2.3) node {$p$};
\draw (1,2.3) node {$p_1$};
\draw (2,1.6) node {$q_1$};
\draw (3,1.6) node {$q$};
\draw (8.5,3) node {};
\end{tikzpicture}


Consider the path $\hat{P}=P-pp_1+p_1q_1$
and the graph $\hat{G}=G-p$.
It is clear that both $\hat{P}$ and $Q$ are longest paths in $\hat{G}$.
Note also that $\hat{P} \setminus Q=\{p_1\}$
and $Q \setminus \hat{P}=\{q\}$
and that $p_1q$ is an edge in $\hat{G}$.
As $n(\hat{G}) <  n(G)$ and
$|V(\hat{P}) \cap V(Q)|>|V(P) \cap V(Q)|$,
this is a contradiction to the minimality of
$G$.

The proof is similar for $Q$ instead of $P$.
\end{proof}

\begin{theorem} \label{teorema principal}
The case $\ell=6$ and $n=10$ is impossible.
\end{theorem}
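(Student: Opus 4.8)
The plan is to argue by contradiction, exploiting the minimality of $G$ together with the non\nobreakdash-extendability of the two longest paths, whose common length is $7$ (that is, $8$ vertices). By the previous proposition we may assume that both endpoints of $P$ and both endpoints of $Q$ lie in $S$. Write $P'=\{a,b\}$ and $Q'=\{c,d\}$. The first step, playing the role that Proposition~\ref{forbidden pairs} played in the previous section, is the observation that \emph{no endpoint of $P$ is adjacent to a vertex of $Q'$, and no endpoint of $Q$ is adjacent to a vertex of $P'$}: otherwise we could prepend that vertex of $Q'$ (resp.\ $P'$) to $P$ (resp.\ $Q$) and obtain a path on $9$ vertices, contradicting that the longest paths have $8$ vertices. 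In the notation of the section this says precisely that the two endpoints of $P$ lie in $S\setminus Q''$ and the two endpoints of $Q$ lie in $S\setminus P''$.

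Next I would record the decomposition of $S$ induced by each path. Since $a,b$ are interior vertices of $P$ and are the only vertices of $P$ outside $S$, deleting them splits $P$ into subpaths all of whose vertices lie in $S$: there are exactly two such $S$-segments if $a,b$ are consecutive in $P$ and three otherwise, and together they partition $S$, their outer ends being the two endpoints of $P$. The analogous statement holds for $Q$, $c$, $d$ and the corresponding $Q$-segments. Because $G\setminus S=G[P'\cup Q']$ is connected and $|P'|=|Q'|=2$, there is at least one edge of $G$ joining $\{a,b\}$ to $\{c,d\}$. Treating first the generic situation in which $a,b$ are consecutive in $P$ and $c,d$ are consecutive in $Q$, one writes $P$ as $X\,a\,b\,Y$ and $Q$ as $U\,c\,d\,V$, so that $\{X,Y\}$ and $\{U,V\}$ are two partitions of the same $6$-element set $S$ into subpaths of $G$.

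The heart of the argument is then a splicing step along a cross edge. If, say, $b\sim c$, one concatenates the $P$-segment from the endpoint of $P$ along $X$ to $a$, then $a\,b$, then $b\,c\,d$, and finally the $Q$-segment $V$ down to the endpoint of $Q$; when $X$ and $V$ are vertex-disjoint this is a \emph{simple} path on $|X|+|V|+4$ vertices. I would aim to show that the connectivity of $P'\cup Q'$, the endpoint restriction above, and the freedom to use instead the complementary splice (through $Y$ and $U$), a second cross edge, or both vertices $c,d$ at once, always forces one such concatenation to use disjoint segments spanning at least $9$ vertices, contradicting maximality. The non-consecutive subcases (three $S$-segments on one side) would be reduced to the consecutive ones by the lollipop rearrangement of Proposition~\ref{prop no 2 ni n-2}, rerouting $P$ on its own vertex set so that $a,b$ become consecutive: this keeps $V(P)$, hence $S$ and the connectivity of $P'\cup Q'$, unchanged, and likewise for $Q$.

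The main obstacle is exactly the overlap between the $S$-segments coming from $P$ and those coming from $Q$: since $P$ and $Q$ share \emph{all six} vertices of $S$, a naive concatenation along a cross edge will typically revisit an $S$-vertex, so every reroute must be verified to stay simple. The crux is thus a finite but delicate comparison of the two induced partitions of $S$ against each other, against the positions of the (at most four) edges inside $G[P'\cup Q']$, and against the endpoint constraints, producing in each configuration either a path on nine vertices or a short cycle forced inside $P$ or $Q$ (as in the $4$-cycle contradictions of the previous section). I expect the bookkeeping of these configurations, rather than any single new idea, to be the laborious part of the proof.
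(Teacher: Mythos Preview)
Your endpoint observation (endpoints of $P$ cannot be adjacent to $Q'$, hence $Q''$ avoids both endpoints of $P$) is correct and is exactly what the paper uses to restrict the possible positions of $c_1,c_2,d_1,d_2$ in its Cases~2 and~3.

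The genuine gap is your reduction step. You claim that when $a,b$ are not consecutive along $P$ you can reroute $P$ on its own vertex set, via the lollipop of Proposition~\ref{prop no 2 ni n-2}, so that $a,b$ become consecutive. That proposition does not do this: there $|P'|=1$, and the new path $\widetilde P$ constructed in its proof actually \emph{leaves} $V(P)$ (it drops $p_1$ and picks up $q\in Q'$); it is a rotation that moves the single special vertex away from position~$2$, not a rearrangement that forces two prescribed interior vertices to be adjacent. More to the point, nothing in the hypotheses guarantees a Hamiltonian path on $V(P)$ in which $a$ and $b$ are consecutive: you would need either $ab\in E(G)$ or some auxiliary chord on $V(P)$ that happens to permit such a reroute, and neither is available in general. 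The paper accordingly does \emph{not} reduce to the consecutive case. Its Case~3 (``$ab$ is not an edge of $P$'') is handled by a direct enumeration over the positions $\{i,j\}\in\{\{2,4\},\{2,5\},\{2,6\},\{2,7\},\{3,5\},\{3,6\}\}$ of $P'=\{p_i,p_j\}$ along $P$, and in each subcase one pins down the admissible pairs $\{\{c_1,c_2\},\{d_1,d_2\}\}$ and exhibits an explicit longer path.

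A second, softer point: even in the consecutive/consecutive situation your splice $X\,a\,b\,c\,d\,V$ is a simple path only when $X\cap V=\varnothing$, and since $\{X,Y\}$ and $\{U,V\}$ partition the \emph{same} six vertices of $S$, arranging both disjointness and $|X|+|V|\ge 5$ is a real constraint, not an afterthought. The paper's Case~1 is not dispatched by a general splicing principle: it first forces $|P_1|=|P_2|=2$ from distance inequalities on the $S$-neighbours $c',d'$ of $c,d$ along $Q$, and then exhibits the concrete longer path $P-au_0+ac+cd+du_0$. The bookkeeping you postpone is, in this theorem, essentially the entire proof.
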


\begin{proof}
As $P' \cup Q'$ is connected, we have, without loss of generality, three cases.

\textbf{Case 1}:

$P'=ab$ is an edge of $P$, $Q'=cd$ is an edge and $ac$ is an edge.

In that situation, $P\cap S$ consists on two paths $P_1$ and $P_2$.
Let $p_1$ and $p_2$ be the extremes of $P_1$ and $P_2$ that are not in $P''$.
Let $c'$ and $d'$ be the vertices adjacent to $c$ and $d$ that are not in $Q'$.
Then
$$
{d_P(c',p_1),d_P(c',p_2),d_P(d',p_1),d_P(d',p_2) \geq 2}.
$$
Suppose for a moment that both $c'$ and $d'$ are in the same subpath of $P$, say $P_1$.
Without loss of generality suppose that $d$ is closer to $p_1$ than $c$.
Note that $d_P(c',d')\geq 3$ (otherwise we can augment $P$). Then we obtain the contradiction.

$$
|P_1|\geq d_P(c',p_1) = d_P(c',d')+d_P(d',p_1)\geq 5.
$$

\begin{tikzpicture}[scale=0.8]
\draw(-3.5,0.1) node {};
\draw[-] (0,1)--(5,1);
\draw[-] (5,1)--(6,2);
\draw[-] (6,2)--(7,2);
\draw[-] (7,2)--(8,1);
\draw[-] (8,1)--(9,1);
\draw[-,red] (2,1)--(3,2);
\draw[-,red] (4,2)--(5,1);
\draw[-,line width=1.5pt,dotted] (3,2)--(4,2);
\filldraw [black]  (0,1)    circle (2pt)
[black]  (2,1)    circle (2pt)
[black]  (5,1)    circle (2pt)
[black]  (6,2)    circle (2pt)
[black]  (7,2)    circle (2pt)
[black]  (8,1)    circle (2pt)
[black]  (9,1)    circle (2pt);
\filldraw[red]  (3,2)    circle (2pt)
[red]  (4,2)    circle (2pt);
\draw [
    thick,
    decoration={
        brace,
        mirror,
        raise=0.5cm
    },
    decorate
](0,1.4)--(1.97,1.4);
\draw [
    thick,
    decoration={
        brace,
        mirror,
        raise=0.5cm
    },
    decorate
](2.03,1.4)--(5,1.4);
\draw (0,1.3) node {$p_1$};
\draw (1.8,1.3) node {$d'$};
\draw (3,2.3) node {$d$};
\draw (4,2.3) node {$c$};
\draw (5.3,0.9) node {$c'$};
\draw (6,2.3) node {$a$};
\draw (7,2.3) node {$b$};
\draw (9,0.6) node {$p_2$};
\draw (0.5,0.3) node {$d_P(p_1,d')\ge 2$};
\draw (4.5,0.3) node {$d_P(d',c')\ge 3$};

\draw (8.5,3) node {};
\end{tikzpicture}

Hence, without loss of generality, $c'$ is in $P_1$
and $d'$ is in $P_2$, which implies that $|P_1|,|P_2|\geq 2$ and $|P_1|=|P_2|=2$.

Without loss of generality, assume that
$P_1=u_0u_1u_2$, $P_2=v_0v_1v_2$,
$u_0$ is adjacent to $a$, and $v_0$ is adjacent to $b$
(see figure).

\begin{tikzpicture}[scale=0.8]
\draw(3.5,3.5) node {};
\draw[-] (0,1)--(2,1);
\draw[-] (2,1)--(3,2);
\draw[-] (3,2)--(4,2);
\draw[-] (4,2)--(5,1);
\draw[-] (5,1)--(7,1);
\draw[-,red] (2,1)--(2,3);
\draw[-,red] (5,3)--(5,1);
\draw[-,line width=1.5pt,dotted] (2,3)--(5,3);
\draw[-,line width=1.5pt,dotted] (3,2)..controls(4,2.8)..(5,3);
\filldraw [black]  (0,1)    circle (2pt)
[black]  (1,1)    circle (2pt)
[black]  (2,1)    circle (2pt)
[black]  (3,2)    circle (2pt)
[black]  (4,2)    circle (2pt)
[black]  (5,1)    circle (2pt)
[black]  (6,1)    circle (2pt)
[black]  (7,1)    circle (2pt);
\filldraw[red]  (2,3)    circle (2pt)
[red]  (5,3)    circle (2pt);
\draw (0,0.6) node {$u_2$};
\draw (1,0.6) node {$u_1$};
\draw (2,0.6) node {$u_0$};
\draw (5,0.6) node {$v_0$};
\draw (6,0.6) node {$v_1$};
\draw (7,0.6) node {$v_2$};
\draw (3,2.3) node {$a$};
\draw (4,2.3) node {$b$};
\draw (2,3.3) node {$d$};
\draw (5,3.3) node {$c$};
\draw (8.5,3) node {};
\end{tikzpicture}
\begin{tikzpicture}[scale=0.8]
\draw(3.5,3.5) node {};
\draw[-,line width=2pt,cyan] (0,1)--(2,1);
\draw[-] (2,1)--(3,2);
\draw[-,line width=2pt,cyan] (3,2)--(4,2);
\draw[-,line width=2pt,cyan] (4,2)--(5,1);
\draw[-,line width=2pt,cyan] (5,1)--(7,1);
\draw[-,line width=2pt,cyan] (2,1)--(2,3);
\draw[-,red] (5,3)--(5,1);
\draw[-,line width=2pt,cyan] (2,3)--(5,3);
\draw[-,line width=2pt,cyan] (3,2)..controls(4,2.8)..(5,3);
\filldraw [black]  (0,1)    circle (2pt)
[black]  (1,1)    circle (2pt)
[black]  (2,1)    circle (2pt)
[black]  (3,2)    circle (2pt)
[black]  (4,2)    circle (2pt)
[black]  (5,1)    circle (2pt)
[black]  (6,1)    circle (2pt)
[black]  (7,1)    circle (2pt);
\filldraw[red]  (2,3)    circle (2pt)
[red]  (5,3)    circle (2pt);
\draw (0,0.6) node {$u_2$};
\draw (1,0.6) node {$u_1$};
\draw (2,0.6) node {$u_0$};
\draw (5,0.6) node {$v_0$};
\draw (6,0.6) node {$v_1$};
\draw (7,0.6) node {$v_2$};
\draw (3,2.3) node {$a$};
\draw (4,2.3) node {$b$};
\draw (2,3.3) node {$d$};
\draw (5,3.3) node {$c$};
\draw (3.5,-0.1) node {$P - au_0 + ac + cd + du_0$ is longer than $P$};
\end{tikzpicture}

Note that $cu_0$ is not an edge, otherwise $P-au_0+ac+cu_0$ is a path longer than $P$.
As $d_P(c',u_2),d_P(c',v_2),d_P(d',u_2),d_P(d',v_2)
\geq 2$, we must have that $du_0$ and $cv_0$ are edges.
But then $P-au_0+ac+cd+du_0$ is a path longer than $P$, a contradiction.

\textbf{Case 2.}

$P'=p_ip_{i+1}$ is an edge of $P$, $Q'=\{c,d\}$ and $cd$ is not an edge in $G$.

Let $P=p_1p_2\dots p_8$. Then $i\in \{2,3,4,5,6\}$, and by symmetry it suffices to discard the cases $i=2$, $i=3$ and $i=4$.
Let $Q''=\{c_1,c_2,d_1,d_2\}$ be such that $cc_1,cc_2,dd_1,dd_2$ are edges of $Q$.

$\bullet$ If $i=2$, then $Q''\subset \{p_4,p_5,p_6,p_7\}$.
Moreover, $d_P(c_1,c_2)\ge 2$, $d_P(d_1,d_2)\ge 2$ and $\{c_1,c_2\}\ne \{d_1,d_2\}$, and so
there are only three possibilities:
\begin{itemize}
  \item[$\bullet\bullet$] $
\{\{c_1,c_2\},\{d_1,d_2\}\}=\{\{p_4,p_6\},\{p_4,p_7\}\},
{\vcenter{\hbox{
\begin{tikzpicture}[scale=0.5]
\draw(6,2.3) node {};
\draw[-] (0,1)--(1,2);
\draw[-] (1,2)--(2,2);
\draw[-] (2,2)--(3,1);
\draw[-] (3,1)--(7,1);
\draw[-,red] (3,1)--(4,2);
\draw[-,red] (5,1)--(4,2);
\draw[-,white,line width=2pt] (3,1)--(5,2);
\draw[-,red] (3,1)--(5,2);
\draw[-,red] (6,1)--(5,2);
\filldraw[red]  (4,2)    circle (2pt);
\filldraw[red]  (5,2)    circle (2pt);
\filldraw [black]  (0,1)    circle (2pt)
[black]  (1,2)    circle (2pt)
[black]  (2,2)    circle (2pt)
[black]  (3,1)    circle (2pt)
[black]  (4,1)    circle (2pt)
[black]  (5,1)    circle (2pt)
[black]  (6,1)    circle (2pt)
[black]  (7,1)    circle (2pt);
\draw (0,0.6) node {$p_1$};
\draw (0.7,2.3) node {$p_2$};
\draw (2.3,2.3) node {$p_3$};
\draw (3,0.6) node {$p_4$};
\draw (4,0.6) node {$p_{5}$};
\draw (5,0.6) node {$p_6$};
\draw (6,0.6) node {$p_7$};
\draw (7,0.6) node {$p_8$};
\draw (4.3,2.2) node {$c$};
\draw (5.3,2.2) node {$d$};
\draw (8.5,2.6) node {};
\end{tikzpicture}}}}
$
\item[$\bullet\bullet$] $
\{\{c_1,c_2\},\{d_1,d_2\}\}=\{\{p_4,p_6\},\{p_5,p_7\}\},
{\vcenter{\hbox{
\begin{tikzpicture}[scale=0.5]
\draw(6,2.3) node {};
\draw[-] (0,1)--(1,2);
\draw[-] (1,2)--(2,2);
\draw[-] (2,2)--(3,1);
\draw[-] (3,1)--(7,1);
\draw[-,red] (3,1)--(4,2);
\draw[-,red] (5,1)--(4,2);
\draw[-,white,line width=2pt] (4,1)--(5,2);
\draw[-,red] (4,1)--(5,2);
\draw[-,red] (6,1)--(5,2);
\filldraw[red]  (4,2)    circle (2pt);
\filldraw[red]  (5,2)    circle (2pt);
\filldraw [black]  (0,1)    circle (2pt)
[black]  (1,2)    circle (2pt)
[black]  (2,2)    circle (2pt)
[black]  (3,1)    circle (2pt)
[black]  (4,1)    circle (2pt)
[black]  (5,1)    circle (2pt)
[black]  (6,1)    circle (2pt)
[black]  (7,1)    circle (2pt);
\draw (0,0.6) node {$p_1$};
\draw (0.7,2.3) node {$p_2$};
\draw (2.3,2.3) node {$p_3$};
\draw (3,0.6) node {$p_4$};
\draw (4,0.6) node {$p_{5}$};
\draw (5,0.6) node {$p_6$};
\draw (6,0.6) node {$p_7$};
\draw (7,0.6) node {$p_8$};
\draw (4.3,2.2) node {$c$};
\draw (5.3,2.2) node {$d$};
\draw (8.5,2.6) node {};
\end{tikzpicture}}}}
$
\item[$\bullet\bullet$] $
\{\{c_1,c_2\},\{d_1,d_2\}\}=\{\{p_4,p_7\},\{p_5,p_7\}\},
{\vcenter{\hbox{
\begin{tikzpicture}[scale=0.5]
\draw(6,2.3) node {};
\draw[-] (0,1)--(1,2);
\draw[-] (1,2)--(2,2);
\draw[-] (2,2)--(3,1);
\draw[-] (3,1)--(7,1);
\draw[-,red] (3,1)--(4,2);
\draw[-,red] (6,1)--(4,2);
\draw[-,white,line width=2pt] (4,1)--(5,2);
\draw[-,red] (4,1)--(5,2);
\draw[-,red] (6,1)--(5,2);
\filldraw[red]  (4,2)    circle (2pt);
\filldraw[red]  (5,2)    circle (2pt);
\filldraw [black]  (0,1)    circle (2pt)
[black]  (1,2)    circle (2pt)
[black]  (2,2)    circle (2pt)
[black]  (3,1)    circle (2pt)
[black]  (4,1)    circle (2pt)
[black]  (5,1)    circle (2pt)
[black]  (6,1)    circle (2pt)
[black]  (7,1)    circle (2pt);
\draw (0,0.6) node {$p_1$};
\draw (0.7,2.3) node {$p_2$};
\draw (2.3,2.3) node {$p_3$};
\draw (3,0.6) node {$p_4$};
\draw (4,0.6) node {$p_{5}$};
\draw (5,0.6) node {$p_6$};
\draw (6,0.6) node {$p_7$};
\draw (7,0.6) node {$p_8$};
\draw (4.3,2.2) node {$c$};
\draw (5.3,2.2) node {$d$};
\draw (8.5,2.6) node {};
\end{tikzpicture}}}}
$
\end{itemize}

In the first case $c$ must be connected by an edge with $p_2$, since by assumption $P'\cup Q'$ is connected, $cd$ is not an edge, and
if $c$ is connected with $p_3$, then we can replace $p_3p_4$ with $p_3cp_4$, obtaining a longer path.
The following diagrams shows that with the edge $cp_2$ we obtain a path $\widehat P$, that is longer than $P$, discarding this case.

\begin{tikzpicture}[scale=0.8]
\draw(3.5,0) node {$\widehat{P}$ in blue, with $L(\widehat{P})=8>L(P)=7$};
\draw[-] (0,1)--(1,2);
\draw[-,line width=2pt,cyan] (1,2)--(2,2);
\draw[-,line width=2pt,cyan] (2,2)--(3,1);
\draw[-] (3,1)--(4,1);
\draw[-,line width=2pt,cyan] (4,1)--(5,1);
\draw[-] (5,1)--(6,1);
\draw[-,line width=2pt,cyan] (6,1)--(7,1);
\draw[-,red] (3,1)--(4,2);
\draw[-,line width=2pt,cyan] (5,1)--(4,2);
\draw[-,white,line width=4pt] (3,1)--(5,2);
\draw[-,line width=2pt,cyan] (3,1)--(5,2);
\draw[-,line width=2pt,cyan] (6,1)--(5,2);
\draw[-,line width=2pt,cyan] (1,2)..controls(1,3) and (4,3)..(4,2);
\filldraw[red]  (4,2)    circle (2pt);
\filldraw[red]  (5,2)    circle (2pt);
\filldraw [black]  (0,1)    circle (2pt)
[black]  (1,2)    circle (2pt)
[black]  (2,2)    circle (2pt)
[black]  (3,1)    circle (2pt)
[black]  (4,1)    circle (2pt)
[black]  (5,1)    circle (2pt)
[black]  (6,1)    circle (2pt)
[black]  (7,1)    circle (2pt);
\draw (0,0.6) node {$p_1$};
\draw (0.7,2.3) node {$p_2$};
\draw (2.3,2.3) node {$p_3$};
\draw (3,0.6) node {$p_4$};
\draw (4,0.6) node {$p_{5}$};
\draw (5,0.6) node {$p_6$};
\draw (6,0.6) node {$p_7$};
\draw (7,0.6) node {$p_8$};
\draw (4.3,2.2) node {$c$};
\draw (5.3,2.2) node {$d$};
\draw (8.5,2.6) node {};
\end{tikzpicture}

In the two other cases can be discarded by the paths $\widehat{P}$ in the following diagrams that are longer than $P$,
which concludes the case $i=2$.

\begin{tikzpicture}[scale=0.8]
\draw(3.5,0) node {$\widehat{P}$ in blue, with $L(\widehat{P})=9>L(P)=7$};
\draw[-,line width=2pt,cyan] (0,1)--(1,2);
\draw[-,line width=2pt,cyan] (1,2)--(2,2);
\draw[-,line width=2pt,cyan] (2,2)--(3,1);
\draw[-] (3,1)--(4,1);
\draw[-,line width=2pt,cyan] (4,1)--(5,1);
\draw[-] (5,1)--(6,1);
\draw[-,line width=2pt,cyan] (6,1)--(7,1);
\draw[-,line width=2pt,cyan] (3,1)--(4,2);
\draw[-,line width=2pt,cyan] (5,1)--(4,2);
\draw[-,white,line width=4pt] (4,1)--(5,2);
\draw[-,line width=2pt,cyan] (4,1)--(5,2);
\draw[-,line width=2pt,cyan] (6,1)--(5,2);
\filldraw[red]  (4,2)    circle (2pt);
\filldraw[red]  (5,2)    circle (2pt);
\filldraw [black]  (0,1)    circle (2pt)
[black]  (1,2)    circle (2pt)
[black]  (2,2)    circle (2pt)
[black]  (3,1)    circle (2pt)
[black]  (4,1)    circle (2pt)
[black]  (5,1)    circle (2pt)
[black]  (6,1)    circle (2pt)
[black]  (7,1)    circle (2pt);
\draw (0,0.6) node {$p_1$};
\draw (0.7,2.3) node {$p_2$};
\draw (2.3,2.3) node {$p_3$};
\draw (3,0.6) node {$p_4$};
\draw (4,0.6) node {$p_{5}$};
\draw (5,0.6) node {$p_6$};
\draw (6,0.6) node {$p_7$};
\draw (7,0.6) node {$p_8$};
\draw (4.3,2.2) node {$c$};
\draw (5.3,2.2) node {$d$};
\draw (8.5,2.6) node {};
\end{tikzpicture}
\begin{tikzpicture}[scale=0.8]
\draw(3.5,0) node {$\widehat{P}$ in blue, with $L(\widehat{P})=8>L(P)=7$};
\draw[-,line width=2pt,cyan] (0,1)--(1,2);
\draw[-,line width=2pt,cyan] (1,2)--(2,2);
\draw[-,line width=2pt,cyan] (2,2)--(3,1);
\draw[-] (3,1)--(4,1);
\draw[-,line width=2pt,cyan] (4,1)--(5,1);
\draw[-] (5,1)--(6,1);
\draw[-] (6,1)--(7,1);
\draw[-,line width=2pt,cyan] (3,1)--(4,2);
\draw[-,line width=2pt,cyan] (6,1)--(4,2);
\draw[-,white,line width=4pt] (4,1)--(5,2);
\draw[-,line width=2pt,cyan] (4,1)--(5,2);
\draw[-,line width=2pt,cyan] (6,1)--(5,2);
\filldraw[red]  (4,2)    circle (2pt);
\filldraw[red]  (5,2)    circle (2pt);
\filldraw [black]  (0,1)    circle (2pt)
[black]  (1,2)    circle (2pt)
[black]  (2,2)    circle (2pt)
[black]  (3,1)    circle (2pt)
[black]  (4,1)    circle (2pt)
[black]  (5,1)    circle (2pt)
[black]  (6,1)    circle (2pt)
[black]  (7,1)    circle (2pt);
\draw (0,0.6) node {$p_1$};
\draw (0.7,2.3) node {$p_2$};
\draw (2.3,2.3) node {$p_3$};
\draw (3,0.6) node {$p_4$};
\draw (4,0.6) node {$p_{5}$};
\draw (5,0.6) node {$p_6$};
\draw (6,0.6) node {$p_7$};
\draw (7,0.6) node {$p_8$};
\draw (4.3,2.2) node {$c$};
\draw (5.3,2.2) node {$d$};
\draw (8.5,2.6) node {};
\end{tikzpicture}

$\bullet$ If $i=3$, then $Q''\subset \{p_2,p_5,p_6,p_7\}$.
Moreover, $d_P(c_1,c_2)\ge 2$, $d_P(d_1,d_2)\ge 2$, $\{c_1,c_2\}\ne \{d_1,d_2\}$, and neither $\{c_1,c_2\}$ nor $\{d_1,d_2\}$
are equal to $\{p_2,p_5\}$, since then either $c$ or $d$ could not be connected with $p_3$ nor $p_4$.
Hence
there are only three possibilities:
\begin{itemize}
  \item[$\bullet\bullet$] $
\{\{c_1,c_2\},\{d_1,d_2\}\}=\{\{p_2,p_6\},\{p_2,p_7\}\},
{\vcenter{\hbox{
\begin{tikzpicture}[scale=0.5]
\draw(6,2.3) node {};
\draw[-] (0,1)--(1,1);
\draw[-] (1,1)--(2,2);
\draw[-] (2,2)--(3,2);
\draw[-] (4,1)--(7,1);
\draw[-,red] (1,1)--(4,2);
\draw[-,red] (1,1)--(6,2);
\draw[-,red] (6,1)--(6,2);
\draw[-,white,line width=2pt] (5,1)--(4,2);
\draw[-,red] (5,1)--(4,2);
\draw[-,white,line width=2pt] (3,2)--(4,1);
\draw[-] (3,2)--(4,1);
\filldraw[red]  (4,2)    circle (2pt);
\filldraw[red]  (6,2)    circle (2pt);
\filldraw [black]  (0,1)    circle (2pt)
[black]  (1,1)    circle (2pt)
[black]  (2,2)    circle (2pt)
[black]  (3,2)    circle (2pt)
[black]  (4,1)    circle (2pt)
[black]  (5,1)    circle (2pt)
[black]  (6,1)    circle (2pt)
[black]  (7,1)    circle (2pt);
\draw (0,0.6) node {$p_1$};
\draw (1,0.6) node {$p_2$};
\draw (1.7,2.3) node {$p_3$};
\draw (3.2,2.4) node {$p_4$};
\draw (4,0.6) node {$p_{5}$};
\draw (5,0.6) node {$p_6$};
\draw (6,0.6) node {$p_7$};
\draw (7,0.6) node {$p_8$};
\draw (4.3,2.2) node {$c$};
\draw (6.3,2.2) node {$d$};
\draw (8.5,2.6) node {};
\end{tikzpicture}}}}
$
\item[$\bullet\bullet$] $
\{\{c_1,c_2\},\{d_1,d_2\}\}=\{\{p_2,p_6\},\{p_5,p_7\}\},
{\vcenter{\hbox{
\begin{tikzpicture}[scale=0.5]
\draw(6,2.3) node {};
\draw[-] (0,1)--(1,1);
\draw[-] (1,1)--(2,2);
\draw[-] (2,2)--(3,2);
\draw[-] (4,1)--(7,1);
\draw[-,red] (1,1)--(4,2);
\draw[-,red] (4,1)--(6,2);
\draw[-,red] (6,1)--(6,2);
\draw[-,white,line width=2pt] (5,1)--(4,2);
\draw[-,red] (5,1)--(4,2);
\draw[-,white,line width=2pt] (3,2)--(4,1);
\draw[-] (3,2)--(4,1);
\filldraw[red]  (4,2)    circle (2pt);
\filldraw[red]  (6,2)    circle (2pt);
\filldraw [black]  (0,1)    circle (2pt)
[black]  (1,1)    circle (2pt)
[black]  (2,2)    circle (2pt)
[black]  (3,2)    circle (2pt)
[black]  (4,1)    circle (2pt)
[black]  (5,1)    circle (2pt)
[black]  (6,1)    circle (2pt)
[black]  (7,1)    circle (2pt);
\draw (0,0.6) node {$p_1$};
\draw (1,0.6) node {$p_2$};
\draw (1.7,2.3) node {$p_3$};
\draw (3.2,2.4) node {$p_4$};
\draw (4,0.6) node {$p_{5}$};
\draw (5,0.6) node {$p_6$};
\draw (6,0.6) node {$p_7$};
\draw (7,0.6) node {$p_8$};
\draw (4.3,2.2) node {$c$};
\draw (6.3,2.2) node {$d$};
\draw (8.5,2.6) node {};
\end{tikzpicture}}}}
$
\item[$\bullet\bullet$] $
\{\{c_1,c_2\},\{d_1,d_2\}\}=\{\{p_2,p_7\},\{p_5,p_7\}\},
{\vcenter{\hbox{
\begin{tikzpicture}[scale=0.5]
\draw(6,2.3) node {};
\draw[-] (0,1)--(1,1);
\draw[-] (1,1)--(2,2);
\draw[-] (2,2)--(3,2);
\draw[-] (4,1)--(7,1);
\draw[-,red] (1,1)--(4,2);
\draw[-,red] (4,1)--(6,2);
\draw[-,red] (6,1)--(6,2);
\draw[-,white,line width=2pt] (6,1)--(4,2);
\draw[-,red] (6,1)--(4,2);
\draw[-,white,line width=2pt] (3,2)--(4,1);
\draw[-] (3,2)--(4,1);
\filldraw[red]  (4,2)    circle (2pt);
\filldraw[red]  (6,2)    circle (2pt);
\filldraw [black]  (0,1)    circle (2pt)
[black]  (1,1)    circle (2pt)
[black]  (2,2)    circle (2pt)
[black]  (3,2)    circle (2pt)
[black]  (4,1)    circle (2pt)
[black]  (5,1)    circle (2pt)
[black]  (6,1)    circle (2pt)
[black]  (7,1)    circle (2pt);
\draw (0,0.6) node {$p_1$};
\draw (1,0.6) node {$p_2$};
\draw (1.7,2.3) node {$p_3$};
\draw (3.2,2.4) node {$p_4$};
\draw (4,0.6) node {$p_{5}$};
\draw (5,0.6) node {$p_6$};
\draw (6,0.6) node {$p_7$};
\draw (7,0.6) node {$p_8$};
\draw (4.3,2.2) node {$c$};
\draw (6.3,2.2) node {$d$};
\draw (8.5,2.6) node {};
\end{tikzpicture}}}}
$
\end{itemize}

The first two cases can be discarded by the paths $\widehat{P}$ in the following diagrams that are longer than $P$.

\begin{tikzpicture}[scale=0.8]
\draw(6,2.3) node {};
\draw[-] (0,1)--(1,1);
\draw[-,cyan,line width=2pt] (1,1)--(2,2);
\draw[-,cyan,line width=2pt] (2,2)--(3,2);
\draw[-,cyan,line width=2pt] (4,1)--(5,1);
\draw[-] (5,1)--(6,1);
\draw[-,cyan,line width=2pt] (6,1)--(7,1);
\draw[-,red] (1,1)--(4,2);
\draw[-,cyan,line width=2pt] (1,1)--(6,2);
\draw[-,cyan,line width=2pt] (6,1)--(6,2);
\draw[-,white,line width=4pt] (5,1)--(4,2);
\draw[-,cyan,line width=2pt] (5,1)--(4,2);
\draw[-,white,line width=4pt] (3,2)--(4,1);
\draw[-,cyan,line width=2pt] (3,2)--(4,1);
\filldraw[red]  (4,2)    circle (2pt);
\filldraw[red]  (6,2)    circle (2pt);
\filldraw [black]  (0,1)    circle (2pt)
[black]  (1,1)    circle (2pt)
[black]  (2,2)    circle (2pt)
[black]  (3,2)    circle (2pt)
[black]  (4,1)    circle (2pt)
[black]  (5,1)    circle (2pt)
[black]  (6,1)    circle (2pt)
[black]  (7,1)    circle (2pt);
\draw (0,0.6) node {$p_1$};
\draw (1,0.6) node {$p_2$};
\draw (1.7,2.3) node {$p_3$};
\draw (3.2,2.4) node {$p_4$};
\draw (4,0.6) node {$p_{5}$};
\draw (5,0.6) node {$p_6$};
\draw (6,0.6) node {$p_7$};
\draw (7,0.6) node {$p_8$};
\draw (4.3,2.2) node {$c$};
\draw (6.3,2.2) node {$d$};
\draw (8.5,2.6) node {};
\draw(3.5,0) node {$\widehat{P}$ in blue, with $L(\widehat{P})=8>L(P)=7$};
\end{tikzpicture}
\begin{tikzpicture}[scale=0.8]
\draw(6,2.3) node {};
\draw[-] (0,1)--(1,1);
\draw[-,cyan,line width=2pt] (1,1)--(2,2);
\draw[-,cyan,line width=2pt] (2,2)--(3,2);
\draw[-,cyan,line width=2pt] (4,1)--(5,1);
\draw[-] (5,1)--(6,1);
\draw[-,cyan,line width=2pt] (6,1)--(7,1);
\draw[-,cyan,line width=2pt] (1,1)--(4,2);
\draw[-,cyan,line width=2pt] (4,1)--(6,2);
\draw[-,cyan,line width=2pt] (6,1)--(6,2);
\draw[-,white,line width=4pt] (5,1)--(4,2);
\draw[-,cyan,line width=2pt] (5,1)--(4,2);
\draw[-,white,line width=2pt] (3,2)--(4,1);
\draw[-] (3,2)--(4,1);
\filldraw[red]  (4,2)    circle (2pt);
\filldraw[red]  (6,2)    circle (2pt);
\filldraw [black]  (0,1)    circle (2pt)
[black]  (1,1)    circle (2pt)
[black]  (2,2)    circle (2pt)
[black]  (3,2)    circle (2pt)
[black]  (4,1)    circle (2pt)
[black]  (5,1)    circle (2pt)
[black]  (6,1)    circle (2pt)
[black]  (7,1)    circle (2pt);
\draw (0,0.6) node {$p_1$};
\draw (1,0.6) node {$p_2$};
\draw (1.7,2.3) node {$p_3$};
\draw (3.2,2.4) node {$p_4$};
\draw (4,0.6) node {$p_{5}$};
\draw (5,0.6) node {$p_6$};
\draw (6,0.6) node {$p_7$};
\draw (7,0.6) node {$p_8$};
\draw (4.3,2.2) node {$c$};
\draw (6.3,2.2) node {$d$};
\draw (8.5,2.6) node {};
\draw(3.5,0) node {$\widehat{P}$ in blue, with $L(\widehat{P})=8>L(P)=7$};
\end{tikzpicture}

In the third case $c$ must be connected by an edge with $p_4$, since by assumption $P'\cup Q'$ is connected, $cd$ is not an edge, and
if $c$ is connected with $p_3$, then we can replace $p_2p_3$ with $p_2cp_3$, obtaining a longer path.
The following diagrams shows that with the edge $cp_4$ we obtain a path $\widehat P$, that is longer than $P$, discarding the third case
and concluding the case $i=3$.

\begin{tikzpicture}[scale=0.8]
\draw(6,2.3) node {};
\draw[-,cyan,line width=2pt] (0,1)--(1,1);
\draw[-,cyan,line width=2pt] (1,1)--(2,2);
\draw[-,cyan,line width=2pt] (2,2)--(3,2);
\draw[-,cyan,line width=2pt] (4,1)--(5,1);
\draw[-] (5,1)--(6,1);
\draw[-] (6,1)--(7,1);
\draw[-,red] (1,1)--(4,2);
\draw[-,cyan,line width=2pt] (4,1)--(6,2);
\draw[-,cyan,line width=2pt] (6,1)--(6,2);
\draw[-,cyan,line width=2pt] (3,2)..controls (3.5,2.3)..(4,2);
\draw[-,white,line width=4pt] (6,1)--(4,2);
\draw[-,cyan,line width=2pt] (6,1)--(4,2);
\draw[-,white,line width=2pt] (3,2)--(4,1);
\draw[-] (3,2)--(4,1);
\filldraw[red]  (4,2)    circle (2pt);
\filldraw[red]  (6,2)    circle (2pt);
\filldraw [black]  (0,1)    circle (2pt)
[black]  (1,1)    circle (2pt)
[black]  (2,2)    circle (2pt)
[black]  (3,2)    circle (2pt)
[black]  (4,1)    circle (2pt)
[black]  (5,1)    circle (2pt)
[black]  (6,1)    circle (2pt)
[black]  (7,1)    circle (2pt);
\draw (0,0.6) node {$p_1$};
\draw (1,0.6) node {$p_2$};
\draw (1.7,2.3) node {$p_3$};
\draw (3.2,2.4) node {$p_4$};
\draw (4,0.6) node {$p_{5}$};
\draw (5,0.6) node {$p_6$};
\draw (6,0.6) node {$p_7$};
\draw (7,0.6) node {$p_8$};
\draw (4.3,2.2) node {$c$};
\draw (6.3,2.2) node {$d$};
\draw (8.5,2.6) node {};
\draw(3.5,0) node {$\widehat{P}$ in blue, with $L(\widehat{P})=8>L(P)=7$};
\end{tikzpicture}

$\bullet$ If $i=4$, then $Q''\subset \{p_2,p_3,p_6,p_7\}$.
Moreover, $d_P(c_1,c_2)\ge 2$, $d_P(d_1,d_2)\ge 2$, $\{c_1,c_2\}\ne \{d_1,d_2\}$, and neither $\{c_1,c_2\}$ nor $\{d_1,d_2\}$
are equal to $\{p_3,p_6\}$, since then either $c$ or $d$ could not be connected with $p_4$ nor $p_5$.
Hence
there are only three possibilities:

\begin{itemize}
  \item[$\bullet\bullet$] $
\{\{c_1,c_2\},\{d_1,d_2\}\}=\{\{p_2,p_6\},\{p_2,p_7\}\},
{\vcenter{\hbox{
\begin{tikzpicture}[scale=0.5]
\draw(6,2.3) node {};
\draw[-] (0,1)--(2,1);
\draw[-] (3,2)--(4,2);
\draw[-] (5,1)--(7,1);
\draw[-,red] (6,1)--(5,2);
\draw[-,red] (1,1)--(2,2);
\draw[-,red] (2,2)--(5,1);
\draw[-,white,line width=2pt] (1,1)--(5,2);
\draw[-,red] (1,1)--(5,2);
\draw[-,white,line width=2pt] (4,2)--(5,1);
\draw[-] (4,2)--(5,1);
\draw[-,white,line width=2pt] (2,1)--(3,2);
\draw[-] (2,1)--(3,2);
\filldraw[red]  (2,2)    circle (2pt);
\filldraw[red]  (5,2)    circle (2pt);
\filldraw [black]  (0,1)    circle (2pt)
[black]  (1,1)    circle (2pt)
[black]  (2,1)    circle (2pt)
[black]  (3,2)    circle (2pt)
[black]  (4,2)    circle (2pt)
[black]  (5,1)    circle (2pt)
[black]  (6,1)    circle (2pt)
[black]  (7,1)    circle (2pt);
\draw (0,0.6) node {$p_1$};
\draw (1,0.6) node {$p_2$};
\draw (2,0.6) node {$p_3$};
\draw (3.2,2.4) node {$p_4$};
\draw (4.3,2.4) node {$p_{5}$};
\draw (5,0.6) node {$p_6$};
\draw (6,0.6) node {$p_7$};
\draw (7,0.6) node {$p_8$};
\draw (1.7,2.3) node {$c$};
\draw (5.4,2.2) node {$d$};
\draw (8.5,2.6) node {};
\end{tikzpicture}}}}
$
  \item[$\bullet\bullet$] $
\{\{c_1,c_2\},\{d_1,d_2\}\}=\{\{p_2,p_6\},\{p_3,p_7\}\},
{\vcenter{\hbox{
\begin{tikzpicture}[scale=0.5]
\draw(6,2.3) node {};
\draw[-] (0,1)--(2,1);
\draw[-] (3,2)--(4,2);
\draw[-] (5,1)--(7,1);
\draw[-,red] (6,1)--(5,2);
\draw[-,red] (1,1)--(2,2);
\draw[-,red] (2,2)--(5,1);
\draw[-,white,line width=2pt] (2,1)--(5,2);
\draw[-,red] (2,1)--(5,2);
\draw[-,white,line width=2pt] (4,2)--(5,1);
\draw[-] (4,2)--(5,1);
\draw[-,white,line width=2pt] (2,1)--(3,2);
\draw[-] (2,1)--(3,2);
\filldraw[red]  (2,2)    circle (2pt);
\filldraw[red]  (5,2)    circle (2pt);
\filldraw [black]  (0,1)    circle (2pt)
[black]  (1,1)    circle (2pt)
[black]  (2,1)    circle (2pt)
[black]  (3,2)    circle (2pt)
[black]  (4,2)    circle (2pt)
[black]  (5,1)    circle (2pt)
[black]  (6,1)    circle (2pt)
[black]  (7,1)    circle (2pt);
\draw (0,0.6) node {$p_1$};
\draw (1,0.6) node {$p_2$};
\draw (2,0.6) node {$p_3$};
\draw (3.2,2.4) node {$p_4$};
\draw (4.3,2.4) node {$p_{5}$};
\draw (5,0.6) node {$p_6$};
\draw (6,0.6) node {$p_7$};
\draw (7,0.6) node {$p_8$};
\draw (1.7,2.3) node {$c$};
\draw (5.4,2.2) node {$d$};
\draw (8.5,2.6) node {};
\end{tikzpicture}}}}
$
  \item[$\bullet\bullet$] $
\{\{c_1,c_2\},\{d_1,d_2\}\}=\{\{p_2,p_7\},\{p_3,p_7\}\},
{\vcenter{\hbox{
\begin{tikzpicture}[scale=0.5]
\draw(6,2.3) node {};
\draw[-] (0,1)--(2,1);
\draw[-] (3,2)--(4,2);
\draw[-] (5,1)--(7,1);
\draw[-,red] (6,1)--(5,2);
\draw[-,red] (1,1)--(2,2);
\draw[-,red] (2,2)--(6,1);
\draw[-,white,line width=2pt] (2,1)--(5,2);
\draw[-,red] (2,1)--(5,2);
\draw[-,white,line width=2pt] (4,2)--(5,1);
\draw[-] (4,2)--(5,1);
\draw[-,white,line width=2pt] (2,1)--(3,2);
\draw[-] (2,1)--(3,2);
\filldraw[red]  (2,2)    circle (2pt);
\filldraw[red]  (5,2)    circle (2pt);
\filldraw [black]  (0,1)    circle (2pt)
[black]  (1,1)    circle (2pt)
[black]  (2,1)    circle (2pt)
[black]  (3,2)    circle (2pt)
[black]  (4,2)    circle (2pt)
[black]  (5,1)    circle (2pt)
[black]  (6,1)    circle (2pt)
[black]  (7,1)    circle (2pt);
\draw (0,0.6) node {$p_1$};
\draw (1,0.6) node {$p_2$};
\draw (2,0.6) node {$p_3$};
\draw (3.2,2.4) node {$p_4$};
\draw (4.3,2.4) node {$p_{5}$};
\draw (5,0.6) node {$p_6$};
\draw (6,0.6) node {$p_7$};
\draw (7,0.6) node {$p_8$};
\draw (1.7,2.3) node {$c$};
\draw (5.4,2.2) node {$d$};
\draw (8.5,2.6) node {};
\end{tikzpicture}}}}
$
\end{itemize}

The three cases can be discarded by the paths $\widehat{P}$ in the following diagrams that are longer than $P$.

\noindent \begin{tikzpicture}[scale=0.5]
\draw(6,2.3) node {};
\draw[-] (0,1)--(2,1);
\draw[-,line width=2pt,cyan] (3,2)--(4,2);
\draw[-] (5,1)--(7,1);
\draw[-,line width=2pt,cyan] (6,1)--(7,1);
\draw[-,line width=2pt,cyan] (6,1)--(5,2);
\draw[-,line width=2pt,cyan] (1,1)--(2,2);
\draw[-,line width=2pt,cyan] (2,2)--(5,1);
\draw[-,white,line width=4pt] (1,1)--(5,2);
\draw[-,line width=2pt,cyan] (1,1)--(5,2);
\draw[-,white,line width=4pt] (4,2)--(5,1);
\draw[-,line width=2pt,cyan] (4,2)--(5,1);
\draw[-,white,line width=4pt] (2,1)--(3,2);
\draw[-,line width=2pt,cyan] (2,1)--(3,2);
\filldraw[red]  (2,2)    circle (2pt);
\filldraw[red]  (5,2)    circle (2pt);
\filldraw [black]  (0,1)    circle (2pt)
[black]  (1,1)    circle (2pt)
[black]  (2,1)    circle (2pt)
[black]  (3,2)    circle (2pt)
[black]  (4,2)    circle (2pt)
[black]  (5,1)    circle (2pt)
[black]  (6,1)    circle (2pt)
[black]  (7,1)    circle (2pt);
\draw (0,0.6) node {$p_1$};
\draw (1,0.6) node {$p_2$};
\draw (2,0.6) node {$p_3$};
\draw (3.2,2.4) node {$p_4$};
\draw (4.3,2.4) node {$p_{5}$};
\draw (5,0.6) node {$p_6$};
\draw (6,0.6) node {$p_7$};
\draw (7,0.6) node {$p_8$};
\draw (1.7,2.3) node {$c$};
\draw (5.4,2.2) node {$d$};
\draw (8.5,2.6) node {};
\draw(3.5,-0.3) node {$\widehat{P}$ in blue, $L(\widehat{P})>L(P)$};
\end{tikzpicture}
\begin{tikzpicture}[scale=0.5]
\draw(6,2.3) node {};
\draw[-,line width=2pt,cyan] (0,1)--(1,1);
\draw[-] (1,1)--(2,1);
\draw[-,line width=2pt,cyan] (3,2)--(4,2);
\draw[-] (5,1)--(7,1);
\draw[-,line width=2pt,cyan] (6,1)--(7,1);
\draw[-,line width=2pt,cyan] (6,1)--(5,2);
\draw[-,line width=2pt,cyan] (1,1)--(2,2);
\draw[-,line width=2pt,cyan] (2,2)--(5,1);
\draw[-,white,line width=4pt] (2,1)--(5,2);
\draw[-,line width=2pt,cyan] (2,1)--(5,2);
\draw[-,white,line width=4pt] (4,2)--(5,1);
\draw[-,line width=2pt,cyan] (4,2)--(5,1);
\draw[-,white,line width=4pt] (2,1)--(3,2);
\draw[-,line width=2pt,cyan] (2,1)--(3,2);
\filldraw[red]  (2,2)    circle (2pt);
\filldraw[red]  (5,2)    circle (2pt);
\filldraw [black]  (0,1)    circle (2pt)
[black]  (1,1)    circle (2pt)
[black]  (2,1)    circle (2pt)
[black]  (3,2)    circle (2pt)
[black]  (4,2)    circle (2pt)
[black]  (5,1)    circle (2pt)
[black]  (6,1)    circle (2pt)
[black]  (7,1)    circle (2pt);
\draw (0,0.6) node {$p_1$};
\draw (1,0.6) node {$p_2$};
\draw (2,0.6) node {$p_3$};
\draw (3.2,2.4) node {$p_4$};
\draw (4.3,2.4) node {$p_{5}$};
\draw (5,0.6) node {$p_6$};
\draw (6,0.6) node {$p_7$};
\draw (7,0.6) node {$p_8$};
\draw (1.7,2.3) node {$c$};
\draw (5.4,2.2) node {$d$};
\draw (8.5,2.6) node {};
\draw(3.5,-0.3) node {$\widehat{P}$ in blue, $L(\widehat{P})>L(P)$};
\end{tikzpicture}
\begin{tikzpicture}[scale=0.5]
\draw(6,2.3) node {};
\draw[-,line width=2pt,cyan] (0,1)--(1,1);
\draw[-] (1,1)--(2,1);
\draw[-,line width=2pt,cyan] (3,2)--(4,2);
\draw[-] (5,1)--(7,1);
\draw[-,line width=2pt,cyan] (6,1)--(5,2);
\draw[-,line width=2pt,cyan] (1,1)--(2,2);
\draw[-,line width=2pt,cyan] (2,2)--(6,1);
\draw[-,white,line width=4pt] (2,1)--(5,2);
\draw[-,line width=2pt,cyan] (2,1)--(5,2);
\draw[-,white,line width=4pt] (4,2)--(5,1);
\draw[-,line width=2pt,cyan] (4,2)--(5,1);
\draw[-,white,line width=4pt] (2,1)--(3,2);
\draw[-,line width=2pt,cyan] (2,1)--(3,2);
\filldraw[red]  (2,2)    circle (2pt);
\filldraw[red]  (5,2)    circle (2pt);
\filldraw [black]  (0,1)    circle (2pt)
[black]  (1,1)    circle (2pt)
[black]  (2,1)    circle (2pt)
[black]  (3,2)    circle (2pt)
[black]  (4,2)    circle (2pt)
[black]  (5,1)    circle (2pt)
[black]  (6,1)    circle (2pt)
[black]  (7,1)    circle (2pt);
\draw (0,0.6) node {$p_1$};
\draw (1,0.6) node {$p_2$};
\draw (2,0.6) node {$p_3$};
\draw (3.2,2.4) node {$p_4$};
\draw (4.3,2.4) node {$p_{5}$};
\draw (5,0.6) node {$p_6$};
\draw (6,0.6) node {$p_7$};
\draw (7,0.6) node {$p_8$};
\draw (1.7,2.3) node {$c$};
\draw (5.4,2.2) node {$d$};
\draw (8.5,2.6) node {};
\draw(3.5,-0.3) node {$\widehat{P}$ in blue, $L(\widehat{P})>L(P)$};
\end{tikzpicture}

This discards the possibility that $i=4$ and concludes the Case 2.

\textbf{Case 3.}

 $P'=\{a,b\}$, such that $ab$ is not an edge of $P$,  $Q'=\{c,d\}$ and $cd$ is not an edge of $Q$.

Let $P=p_1p_2\dots p_8$ be the sequential order of the points in $P$. Assume that $P'=\{i,j\}$, and that $i<j$. Then we know that $i,j\notin \{1,8\}$
and that $j-i>1$. Thus
$$
\{i,j\}\in \{\{2,4\},\{2,5\},\{2,6\},\{2,7\},\{3,5\},\{3,6\},\{3,7\},\{4,6\},\{4,7\},\{5,7\}\}.
$$
By symmetry it suffices to discard the following cases
$$
\{i,j\}\in \{\{2,4\},\{2,5\},\{2,6\},\{2,7\},\{3,5\},\{3,6\}\}.
$$
Let $Q''=\{c_1,c_2,d_1,d_2\}$ be such that $cc_1,cc_2,dd_1,dd_2$ are edges of $Q$.
Then  $d_P(c_1,c_2)\ge 2$, $d_P(d_1,d_2)\ge 2$ and $\{c_1,c_2\}\ne \{d_1,d_2\}$.

$\bullet$ If $\{i,j\}=\{2,4\}$, then $Q''\subset \{p_3,p_5,p_6,p_7\}$.
If $p_3\in \{c_1,c_2\}$, then $c$ cannot be connected with $p_2$ nor with $p_4$ by an edge, since then, we could extend $P$. For example, if
$c_1=p_3$ and $c$ is connected with $p_2$, then we replace $p_2p_3$ by $p_2cp_3$ in $P$ and obtain a longer path. Similarly,
if $p_3\in \{d_1,d_2\}$, then $d$ cannot be connected with $p_2$ nor with $p_4$ by an edge. Hence, since $P'\cup Q'=\{p_2,p_4,c,d\}$ is connected,
$p_3$ cannot be in both $\{c_1,c_2\}$ and $\{d_1,d_2\}$. But it has to be in one of them, since otherwise $Q''\subset \{p_5,p_6,p_7\}$, which leads
to $\{c_1,c_2\}= \{d_1,d_2\}=\{p_5,p_7\}$, contradicting $\{c_1,c_2\}\ne \{d_1,d_2\}$.

Assume for example that  $p_3\in \{c_1,c_2\}$ but $p_3\notin \{d_1,d_2\}$. Then $c$
cannot be connected with $p_2$ nor with $p_4$ by an edge. Since $P'\cup Q'=\{p_2,p_4,c,d\}$ is connected, $c$ must be connected with
$d$ by an edge. Moreover, $\{d_1,d_2\}\subset \{p_5,p_6,p_7\}$, and so $\{d_1,d_2\}=\{p_5,p_7\}$, which yields the path
$\widehat{P}= p_1p_2p_3p_4p_5p_6p_7dc$, which is longer than $P$.

\begin{tikzpicture}[scale=0.5]
\draw(6,2.3) node {};
\draw[-] (0,1)--(1,2);
\draw[-] (1,2)--(2,1);
\draw[-] (2,1)--(3,2);
\draw[-] (4,1)--(5,1);
\draw[-] (5,1)--(6,1);
\draw[-] (6,1)--(7,1);

\draw[-,red] (2,1)--(4,2);
\draw[-,red] (4,1)--(5,2);
\draw[-,red] (5,2)--(6,1);
\draw[-,line width=1.5pt,dotted] (4,2)--(5,2);
\draw[-,white,line width=2pt] (3,2)--(4,1);
\draw[-] (3,2)--(4,1);
\filldraw[red]  (4,2)    circle (2pt);
\filldraw[red]  (5,2)    circle (2pt);
\filldraw [black]  (0,1)    circle (2pt)
[black]  (1,2)    circle (2pt)
[black]  (2,1)    circle (2pt)
[black]  (3,2)    circle (2pt)
[black]  (4,1)    circle (2pt)
[black]  (5,1)    circle (2pt)
[black]  (6,1)    circle (2pt)
[black]  (7,1)    circle (2pt);
\draw (0,0.6) node {$p_1$};
\draw (0.7,2.3) node {$p_2$};
\draw (2,0.6) node {$p_3$};
\draw (3,2.4) node {$p_4$};
\draw (4,0.6) node {$p_{5}$};
\draw (5,0.6) node {$p_6$};
\draw (6,0.6) node {$p_7$};
\draw (7,0.6) node {$p_8$};
\draw (4,2.3) node {$c$};
\draw (5.4,2.2) node {$d$};
\draw (8.5,2.6) node {};
\end{tikzpicture}
\begin{tikzpicture}[scale=0.5]
\draw(6,2.3) node {};
\draw[-,line width=2pt,cyan] (0,1)--(1,2);
\draw[-,line width=2pt,cyan] (1,2)--(2,1);
\draw[-,line width=2pt,cyan] (2,1)--(3,2);
\draw[-,line width=2pt,cyan] (4,1)--(5,1);
\draw[-,line width=2pt,cyan] (5,1)--(6,1);
\draw[-] (6,1)--(7,1);

\draw[-,red] (2,1)--(4,2);
\draw[-,red] (4,1)--(5,2);
\draw[-,line width=2pt,cyan] (5,2)--(6,1);
\draw[-,line width=2pt,cyan] (4,2)--(5,2);
\draw[-,white,line width=4pt] (3,2)--(4,1);
\draw[-,line width=2pt,cyan] (3,2)--(4,1);
\filldraw[red]  (4,2)    circle (2pt);
\filldraw[red]  (5,2)    circle (2pt);
\filldraw [black]  (0,1)    circle (2pt)
[black]  (1,2)    circle (2pt)
[black]  (2,1)    circle (2pt)
[black]  (3,2)    circle (2pt)
[black]  (4,1)    circle (2pt)
[black]  (5,1)    circle (2pt)
[black]  (6,1)    circle (2pt)
[black]  (7,1)    circle (2pt);
\draw (0,0.6) node {$p_1$};
\draw (0.7,2.3) node {$p_2$};
\draw (2,0.6) node {$p_3$};
\draw (3,2.4) node {$p_4$};
\draw (4,0.6) node {$p_{5}$};
\draw (5,0.6) node {$p_6$};
\draw (6,0.6) node {$p_7$};
\draw (7,0.6) node {$p_8$};
\draw (4,2.3) node {$c$};
\draw (5.4,2.2) node {$d$};
\draw (8.5,2.6) node {};
\draw(3.5,-0.3) node {$\widehat{P}$ in blue, $L(\widehat{P})>L(P)$};
\end{tikzpicture}

If $p_3\in \{d_1,d_2\}$ but $p_3\notin \{c_1,c_2\}$, then the same argument interchanging $c$ and $d$
yields a contradiction, concluding the case
$\{i,j\}=\{2,4\}$.

$\bullet$ If $\{i,j\}=\{2,5\}$, then $Q''\subset \{p_3,p_4,p_6,p_7\}$. If $\{c_1,c_2\}$ (respectively $\{d_1,d_2\}$) is equal to
$\{p_3,p_6\}$, then $c$ (respectively $d$) cannot be connected by an edge with $p_2$ nor with $p_5$, since that would extend $P$.
Since $P'\cup Q'=\{p_2,p_5,c,d\}$ is connected, there is an edge from $c$ to $d$. But then none of $c_1,d_1,d_1,d_2$ can be equal to
$p_7$, since then we could replace $p_8$ by $cd$ in $P$ and obtain a longer path. So
$$
\{\{c_1,c_2\},\{d_1,d_2\}\}=\{\{p_3,p_6\},\{p_4,p_6\}\}.
$$
The path $\widehat{P}$ in the following diagram, which is longer than $P$, shows that this is impossible.

\begin{tikzpicture}[scale=0.5]
\draw(6,2.3) node {};
\draw[-] (0,1)--(1,2);
\draw[-] (1,2)--(2,1);
\draw[-] (2,1)--(3,1);
\draw[-] (5,1)--(7,1);

\draw[-,red] (2,1)--(3,2);
\draw[-,red] (3,1)--(5,2);
\draw[-,red] (5,2)--(5,1);
\draw[-,white,line width=2pt] (3,2)--(5,1);
\draw[-,red] (3,2)--(5,1);
\draw[-,line width=1.5pt,dotted] (3,2)..controls (3,3.2)and(5,3.2)..(5,2);
\draw[-,white,line width=2pt] (3,1)--(4,2);
\draw[-] (3,1)--(4,2);
\draw[-,white,line width=2pt] (4,2)--(5,1);
\draw[-] (4,2)--(5,1);
\filldraw[red]  (3,2)    circle (2pt);
\filldraw[red]  (5,2)    circle (2pt);
\filldraw [black]  (0,1)    circle (2pt)
[black]  (1,2)    circle (2pt)
[black]  (2,1)    circle (2pt)
[black]  (3,1)    circle (2pt)
[black]  (4,2)    circle (2pt)
[black]  (5,1)    circle (2pt)
[black]  (6,1)    circle (2pt)
[black]  (7,1)    circle (2pt);
\draw (0,0.6) node {$p_1$};
\draw (0.7,2.3) node {$p_2$};
\draw (2,0.6) node {$p_3$};
\draw (3,0.6) node {$p_4$};
\draw (4,2.3) node {$p_{5}$};
\draw (5,0.6) node {$p_6$};
\draw (6,0.6) node {$p_7$};
\draw (7,0.6) node {$p_8$};
\draw (2.7,2.3) node {$c$};
\draw (5.4,2.2) node {$d$};
\draw (8.5,2.6) node {};
\end{tikzpicture}
\begin{tikzpicture}[scale=0.5]
\draw(6,2.3) node {};
\draw[-,line width=2pt,cyan] (0,1)--(1,2);
\draw[-,line width=2pt,cyan] (1,2)--(2,1);
\draw[-] (2,1)--(3,1);
\draw[-,line width=2pt,cyan] (5,1)--(7,1);

\draw[-,line width=2pt,cyan] (2,1)--(3,2);
\draw[-,line width=2pt,cyan] (3,1)--(5,2);
\draw[-,red] (5,2)--(5,1);
\draw[-,white,line width=2pt] (3,2)--(5,1);
\draw[-,red] (3,2)--(5,1);
\draw[-,line width=2pt,cyan] (3,2)..controls (3,3.2)and(5,3.2)..(5,2);
\draw[-,white,line width=4pt] (3,1)--(4,2);
\draw[-,line width=2pt,cyan] (3,1)--(4,2);
\draw[-,white,line width=4pt] (4,2)--(5,1);
\draw[-,line width=2pt,cyan] (4,2)--(5,1);
\filldraw[red]  (3,2)    circle (2pt);
\filldraw[red]  (5,2)    circle (2pt);
\filldraw [black]  (0,1)    circle (2pt)
[black]  (1,2)    circle (2pt)
[black]  (2,1)    circle (2pt)
[black]  (3,1)    circle (2pt)
[black]  (4,2)    circle (2pt)
[black]  (5,1)    circle (2pt)
[black]  (6,1)    circle (2pt)
[black]  (7,1)    circle (2pt);
\draw (0,0.6) node {$p_1$};
\draw (0.7,2.3) node {$p_2$};
\draw (2,0.6) node {$p_3$};
\draw (3,0.6) node {$p_4$};
\draw (4,2.3) node {$p_{5}$};
\draw (5,0.6) node {$p_6$};
\draw (6,0.6) node {$p_7$};
\draw (7,0.6) node {$p_8$};
\draw (2.7,2.3) node {$c$};
\draw (5.4,2.2) node {$d$};
\draw (8.5,2.6) node {};
\draw(3.5,-0.3) node {$\widehat{P}$ in blue, $L(\widehat{P})>L(P)$};
\end{tikzpicture}

Thus the only possibilities for $\{c_1,c_2\}$ and $\{d_1,d_2\}$ are $\{p_3,p_7\}$, $\{p_4,p_6\}$ and $\{p_4,p_7\}$ and we have the following three
scenarios.

\begin{itemize}
\item[$\bullet\bullet$] $
\{\{c_1,c_2\},\{d_1,d_2\}\}=\{\{p_3,p_7\},\{p_4,p_6\}\},
{\vcenter{\hbox{
\begin{tikzpicture}[scale=0.5]
\draw(6,2.3) node {};
\draw[-] (0,1)--(1,2);
\draw[-] (1,2)--(2,1);
\draw[-] (2,1)--(3,1);
\draw[-] (5,1)--(7,1);

\draw[-,red] (2,1)--(3,2);
\draw[-,red] (3,1)--(5,2);
\draw[-,red] (5,2)--(5,1);
\draw[-,white,line width=2pt] (3,2)--(6,1);
\draw[-,red] (3,2)--(6,1);
\draw[-,white,line width=2pt] (3,1)--(4,2);
\draw[-] (3,1)--(4,2);
\draw[-,white,line width=2pt] (4,2)--(5,1);
\draw[-] (4,2)--(5,1);
\filldraw[red]  (3,2)    circle (2pt);
\filldraw[red]  (5,2)    circle (2pt);
\filldraw [black]  (0,1)    circle (2pt)
[black]  (1,2)    circle (2pt)
[black]  (2,1)    circle (2pt)
[black]  (3,1)    circle (2pt)
[black]  (4,2)    circle (2pt)
[black]  (5,1)    circle (2pt)
[black]  (6,1)    circle (2pt)
[black]  (7,1)    circle (2pt);
\draw (0,0.6) node {$p_1$};
\draw (0.7,2.3) node {$p_2$};
\draw (2,0.6) node {$p_3$};
\draw (3,0.6) node {$p_4$};
\draw (4,2.3) node {$p_{5}$};
\draw (5,0.6) node {$p_6$};
\draw (6,0.6) node {$p_7$};
\draw (7,0.6) node {$p_8$};
\draw (2.7,2.3) node {$c$};
\draw (5.4,2.2) node {$d$};
\draw (8.5,2.6) node {};
\end{tikzpicture}}}}
$
\item[$\bullet\bullet$] $
\{\{c_1,c_2\},\{d_1,d_2\}\}=\{\{p_3,p_7\},\{p_4,p_7\}\},
{\vcenter{\hbox{
\begin{tikzpicture}[scale=0.5]
\draw(6,2.3) node {};
\draw[-] (0,1)--(1,2);
\draw[-] (1,2)--(2,1);
\draw[-] (2,1)--(3,1);
\draw[-] (5,1)--(7,1);

\draw[-,red] (2,1)--(3,2);
\draw[-,red] (3,1)--(5,2);
\draw[-,red] (5,2)--(6,1);
\draw[-,white,line width=2pt] (3,2)--(6,1);
\draw[-,red] (3,2)--(6,1);
\draw[-,white,line width=2pt] (3,1)--(4,2);
\draw[-] (3,1)--(4,2);
\draw[-,white,line width=2pt] (4,2)--(5,1);
\draw[-] (4,2)--(5,1);
\filldraw[red]  (3,2)    circle (2pt);
\filldraw[red]  (5,2)    circle (2pt);
\filldraw [black]  (0,1)    circle (2pt)
[black]  (1,2)    circle (2pt)
[black]  (2,1)    circle (2pt)
[black]  (3,1)    circle (2pt)
[black]  (4,2)    circle (2pt)
[black]  (5,1)    circle (2pt)
[black]  (6,1)    circle (2pt)
[black]  (7,1)    circle (2pt);
\draw (0,0.6) node {$p_1$};
\draw (0.7,2.3) node {$p_2$};
\draw (2,0.6) node {$p_3$};
\draw (3,0.6) node {$p_4$};
\draw (4,2.3) node {$p_{5}$};
\draw (5,0.6) node {$p_6$};
\draw (6,0.6) node {$p_7$};
\draw (7,0.6) node {$p_8$};
\draw (2.7,2.3) node {$c$};
\draw (5.4,2.2) node {$d$};
\draw (8.5,2.6) node {};
\end{tikzpicture}}}}
$
\item[$\bullet\bullet$] $
\{\{c_1,c_2\},\{d_1,d_2\}\}=\{\{p_4,p_6\},\{p_4,p_7\}\},
{\vcenter{\hbox{
\begin{tikzpicture}[scale=0.5]
\draw(6,2.3) node {};
\draw[-] (0,1)--(1,2);
\draw[-] (1,2)--(2,1);
\draw[-] (2,1)--(3,1);
\draw[-] (5,1)--(7,1);

\draw[-,red] (3,1)--(3,2);
\draw[-,red] (3,1)--(5,2);
\draw[-,red] (5,2)--(5,1);
\draw[-,white,line width=2pt] (3,2)--(6,1);
\draw[-,red] (3,2)--(6,1);
\draw[-,white,line width=2pt] (3,1)--(4,2);
\draw[-] (3,1)--(4,2);
\draw[-,white,line width=2pt] (4,2)--(5,1);
\draw[-] (4,2)--(5,1);
\filldraw[red]  (3,2)    circle (2pt);
\filldraw[red]  (5,2)    circle (2pt);
\filldraw [black]  (0,1)    circle (2pt)
[black]  (1,2)    circle (2pt)
[black]  (2,1)    circle (2pt)
[black]  (3,1)    circle (2pt)
[black]  (4,2)    circle (2pt)
[black]  (5,1)    circle (2pt)
[black]  (6,1)    circle (2pt)
[black]  (7,1)    circle (2pt);
\draw (0,0.6) node {$p_1$};
\draw (0.7,2.3) node {$p_2$};
\draw (2,0.6) node {$p_3$};
\draw (3,0.6) node {$p_4$};
\draw (4,2.3) node {$p_{5}$};
\draw (5,0.6) node {$p_6$};
\draw (6,0.6) node {$p_7$};
\draw (7,0.6) node {$p_8$};
\draw (2.7,2.3) node {$c$};
\draw (5.4,2.2) node {$d$};
\draw (8.5,2.6) node {};
\end{tikzpicture}}}}
$
\end{itemize}

The path $\widehat{P}$ in each of the following diagrams shows that none of the first two cases is possible.

\begin{tikzpicture}[scale=0.8]
\draw(6,2.3) node {};
\draw[-,line width=2pt,cyan] (0,1)--(1,2);
\draw[-,line width=2pt,cyan] (1,2)--(2,1);
\draw[-] (2,1)--(3,1);
\draw[-,line width=2pt,cyan] (5,1)--(6,1);
\draw[-] (6,1)--(7,1);
\draw[-,line width=2pt,cyan] (2,1)--(3,2);
\draw[-,line width=2pt,cyan] (3,1)--(5,2);
\draw[-,line width=2pt,cyan] (5,2)--(5,1);
\draw[-,white,line width=4pt] (3,2)--(6,1);
\draw[-,line width=2pt,cyan] (3,2)--(6,1);
\draw[-,white,line width=4pt] (3,1)--(4,2);
\draw[-,line width=2pt,cyan] (3,1)--(4,2);
\draw[-,white,line width=2pt] (4,2)--(5,1);
\draw[-] (4,2)--(5,1);
\filldraw[red]  (3,2)    circle (2pt);
\filldraw[red]  (5,2)    circle (2pt);
\filldraw [black]  (0,1)    circle (2pt)
[black]  (1,2)    circle (2pt)
[black]  (2,1)    circle (2pt)
[black]  (3,1)    circle (2pt)
[black]  (4,2)    circle (2pt)
[black]  (5,1)    circle (2pt)
[black]  (6,1)    circle (2pt)
[black]  (7,1)    circle (2pt);
\draw (0,0.6) node {$p_1$};
\draw (0.7,2.3) node {$p_2$};
\draw (2,0.6) node {$p_3$};
\draw (3,0.6) node {$p_4$};
\draw (4,2.3) node {$p_{5}$};
\draw (5,0.6) node {$p_6$};
\draw (6,0.6) node {$p_7$};
\draw (7,0.6) node {$p_8$};
\draw (2.7,2.3) node {$c$};
\draw (5.4,2.2) node {$d$};
\draw (8.5,2.6) node {};
\end{tikzpicture}
\begin{tikzpicture}[scale=0.8]
\draw(6,2.3) node {};
\draw[-,line width=2pt,cyan] (0,1)--(1,2);
\draw[-,line width=2pt,cyan] (1,2)--(2,1);
\draw[-] (2,1)--(3,1);
\draw[-] (5,1)--(6,1);
\draw[-] (6,1)--(7,1);
\draw[-,line width=2pt,cyan] (2,1)--(3,2);
\draw[-,line width=2pt,cyan] (3,1)--(5,2);
\draw[-,line width=2pt,cyan] (5,2)--(6,1);
\draw[-,white,line width=4pt] (3,2)--(6,1);
\draw[-,line width=2pt,cyan] (3,2)--(6,1);
\draw[-,white,line width=4pt] (3,1)--(4,2);
\draw[-,line width=2pt,cyan] (3,1)--(4,2);
\draw[-,white,line width=4pt] (4,2)--(5,1);
\draw[-,line width=2pt,cyan] (4,2)--(5,1);
\filldraw[red]  (3,2)    circle (2pt);
\filldraw[red]  (5,2)    circle (2pt);
\filldraw [black]  (0,1)    circle (2pt)
[black]  (1,2)    circle (2pt)
[black]  (2,1)    circle (2pt)
[black]  (3,1)    circle (2pt)
[black]  (4,2)    circle (2pt)
[black]  (5,1)    circle (2pt)
[black]  (6,1)    circle (2pt)
[black]  (7,1)    circle (2pt);
\draw (0,0.6) node {$p_1$};
\draw (0.7,2.3) node {$p_2$};
\draw (2,0.6) node {$p_3$};
\draw (3,0.6) node {$p_4$};
\draw (4,2.3) node {$p_{5}$};
\draw (5,0.6) node {$p_6$};
\draw (6,0.6) node {$p_7$};
\draw (7,0.6) node {$p_8$};
\draw (2.7,2.3) node {$c$};
\draw (5.4,2.2) node {$d$};
\draw (8.5,2.6) node {};
\end{tikzpicture}

In the third case $p_5$ cannot connect with $c$ nor $d$, thus it has to be connected with $p_2$, since $P'\cup Q'$ is connected. But then
the path $\widehat{P}$ in second diagram shows that the third case is not possible, thus discarding the case $\{i,j\}=\{2,5\}$.

\begin{tikzpicture}[scale=0.5]
\draw(6,2.3) node {};
\draw[-] (0,1)--(1,2);
\draw[-] (1,2)--(2,1);
\draw[-] (2,1)--(3,1);
\draw[-] (5,1)--(7,1);

\draw[-,red] (3,1)--(3,2);
\draw[-,red] (3,1)--(5,2);
\draw[-,red] (5,2)--(5,1);
\draw[-,white,line width=2pt] (3,2)--(6,1);
\draw[-,red] (3,2)--(6,1);
\draw[-,white,line width=2pt] (3,1)--(4,2);
\draw[-] (3,1)--(4,2);
\draw[-,white,line width=2pt] (4,2)--(5,1);
\draw[-] (4,2)--(5,1);
\draw[-,line width=1.5pt,dotted] (1,2)..controls(1,3)and(4,3)..(4,2);
\filldraw[red]  (3,2)    circle (2pt);
\filldraw[red]  (5,2)    circle (2pt);
\filldraw [black]  (0,1)    circle (2pt)
[black]  (1,2)    circle (2pt)
[black]  (2,1)    circle (2pt)
[black]  (3,1)    circle (2pt)
[black]  (4,2)    circle (2pt)
[black]  (5,1)    circle (2pt)
[black]  (6,1)    circle (2pt)
[black]  (7,1)    circle (2pt);
\draw (0,0.6) node {$p_1$};
\draw (0.7,2.3) node {$p_2$};
\draw (2,0.6) node {$p_3$};
\draw (3,0.6) node {$p_4$};
\draw (4.3,2.3) node {$p_{5}$};
\draw (5,0.6) node {$p_6$};
\draw (6,0.6) node {$p_7$};
\draw (7,0.6) node {$p_8$};
\draw (2.7,2.3) node {$c$};
\draw (5.4,2.2) node {$d$};
\draw (8.5,2.6) node {};
\end{tikzpicture}
\begin{tikzpicture}[scale=0.5]
\draw(6,2.3) node {};
\draw[-,line width=2pt,cyan] (0,1)--(1,2);
\draw[-] (1,2)--(2,1);
\draw[-] (2,1)--(3,1);
\draw[-] (5,1)--(6,1);
\draw[-,line width=2pt,cyan] (6,1)--(7,1);
\draw[-,line width=2pt,cyan] (3,1)--(3,2);
\draw[-,line width=2pt,cyan] (3,1)--(5,2);
\draw[-,line width=2pt,cyan] (5,2)--(5,1);
\draw[-,white,line width=4pt] (3,2)--(6,1);
\draw[-,line width=2pt,cyan] (3,2)--(6,1);
\draw[-,white,line width=2pt] (3,1)--(4,2);
\draw[-] (3,1)--(4,2);
\draw[-,white,line width=4pt] (4,2)--(5,1);
\draw[-,line width=2pt,cyan] (4,2)--(5,1);
\draw[-,line width=2pt,cyan] (1,2)..controls(1,3)and(4,3)..(4,2);
\filldraw[red]  (3,2)    circle (2pt);
\filldraw[red]  (5,2)    circle (2pt);
\filldraw [black]  (0,1)    circle (2pt)
[black]  (1,2)    circle (2pt)
[black]  (2,1)    circle (2pt)
[black]  (3,1)    circle (2pt)
[black]  (4,2)    circle (2pt)
[black]  (5,1)    circle (2pt)
[black]  (6,1)    circle (2pt)
[black]  (7,1)    circle (2pt);
\draw (0,0.6) node {$p_1$};
\draw (0.7,2.3) node {$p_2$};
\draw (2,0.6) node {$p_3$};
\draw (3,0.6) node {$p_4$};
\draw (4.3,2.3) node {$p_{5}$};
\draw (5,0.6) node {$p_6$};
\draw (6,0.6) node {$p_7$};
\draw (7,0.6) node {$p_8$};
\draw (2.7,2.3) node {$c$};
\draw (5.4,2.2) node {$d$};
\draw (8.5,2.6) node {};
\end{tikzpicture}

$\bullet$ If $\{i,j\}=\{2,6\}$, then $Q''\subset \{p_3,p_4,p_5,p_7\}$.
If one of $c_1,c_2$ is equal to $p_3$, then $c$ cannot be connected by an edge with $p_2$ nor with $p_5$, since then one could extend $P$.
So it has to be connected with $d$, but then one of $c$ or $d$ has to be connected with $p_7$, and we can replace $p_8$ by the
edge $cd$ in $P$, obtaining a longer path. Thus $p_3\notin \{c_1,c_2\}$ and by the same argument, interchanging $c$ with $d$, we
obtain $p_3\notin \{d_1,d_2\}$.

Hence $Q''=\{c_1,c_2,d_1,d_2\}\subset\{p_4,p_5,p_7\}$ and there is only one possibility left:
$$
\{ \{c_1,c_2\},\{d_1,d_2\}\}=\{ \{p_4,p_7\},\{p_5,p_7\}\}.
$$
The blue path $\widehat{P}$ in the second diagram, which is longer than $P$,
shows that this is not possible, and so we have discarded the case $\{i,j\}=\{2,6\}$.

\begin{tikzpicture}[scale=0.5]
\draw(6,2.3) node {};
\draw[-] (0,1)--(1,2);
\draw[-] (1,2)--(2,1);
\draw[-] (2,1)--(4,1);
\draw[-] (6,1)--(7,1);

\draw[-,red] (3,1)--(4,2);
\draw[-,red] (4,1)--(6,2);
\draw[-,red] (6,2)--(6,1);
\draw[-,white,line width=2pt] (6,1)--(4,2);
\draw[-,red] (6,1)--(4,2);
\draw[-,white,line width=2pt] (4,1)--(5,2);
\draw[-] (4,1)--(5,2);
\draw[-,white,line width=2pt] (5,2)--(6,1);
\draw[-] (5,2)--(6,1);
\filldraw[red]  (4,2)    circle (2pt);
\filldraw[red]  (6,2)    circle (2pt);
\filldraw [black]  (0,1)    circle (2pt)
[black]  (1,2)    circle (2pt)
[black]  (2,1)    circle (2pt)
[black]  (3,1)    circle (2pt)
[black]  (4,1)    circle (2pt)
[black]  (5,2)    circle (2pt)
[black]  (6,1)    circle (2pt)
[black]  (7,1)    circle (2pt);
\draw (0,0.6) node {$p_1$};
\draw (0.7,2.3) node {$p_2$};
\draw (2,0.6) node {$p_3$};
\draw (3,0.6) node {$p_4$};
\draw (4,0.6) node {$p_{5}$};
\draw (5,2.4) node {$p_6$};
\draw (6,0.6) node {$p_7$};
\draw (7,0.6) node {$p_8$};
\draw (3.7,2.3) node {$c$};
\draw (6.4,2.2) node {$d$};
\draw (8.5,2.6) node {};
\end{tikzpicture}
\begin{tikzpicture}[scale=0.5]
\draw(6,2.3) node {};
\draw[-,line width=2pt,cyan] (0,1)--(1,2);
\draw[-,line width=2pt,cyan] (1,2)--(2,1);
\draw[-,line width=2pt,cyan] (2,1)--(3,1);
\draw[-] (3,1)--(4,1);
\draw[-] (6,1)--(7,1);

\draw[-,line width=2pt,cyan] (3,1)--(4,2);
\draw[-,line width=2pt,cyan] (4,1)--(6,2);
\draw[-,line width=2pt,cyan] (6,2)--(6,1);
\draw[-,white,line width=4pt] (6,1)--(4,2);
\draw[-,line width=2pt,cyan] (6,1)--(4,2);
\draw[-,white,line width=4pt] (4,1)--(5,2);
\draw[-,line width=2pt,cyan] (4,1)--(5,2);
\draw[-,white,line width=2pt] (5,2)--(6,1);
\draw[-] (5,2)--(6,1);
\filldraw[red]  (4,2)    circle (2pt);
\filldraw[red]  (6,2)    circle (2pt);
\filldraw [black]  (0,1)    circle (2pt)
[black]  (1,2)    circle (2pt)
[black]  (2,1)    circle (2pt)
[black]  (3,1)    circle (2pt)
[black]  (4,1)    circle (2pt)
[black]  (5,2)    circle (2pt)
[black]  (6,1)    circle (2pt)
[black]  (7,1)    circle (2pt);
\draw (0,0.6) node {$p_1$};
\draw (0.7,2.3) node {$p_2$};
\draw (2,0.6) node {$p_3$};
\draw (3,0.6) node {$p_4$};
\draw (4,0.6) node {$p_{5}$};
\draw (5,2.4) node {$p_6$};
\draw (6,0.6) node {$p_7$};
\draw (7,0.6) node {$p_8$};
\draw (3.7,2.3) node {$c$};
\draw (6.4,2.2) node {$d$};
\draw (8.5,2.6) node {};
\end{tikzpicture}

$\bullet$ If $\{i,j\}=\{2,7\}$, then $Q''\subset \{p_3,p_4,p_5,p_6\}$.
Thus the only possibilities for $\{c_1,c_2\}$ and $\{d_1,d_2\}$ are $\{p_3,p_5\}$, $\{p_3,p_6\}$ and $\{p_4,p_6\}$ and we have the following three
scenarios.

\begin{itemize}
\item[$\bullet\bullet$] $
\{\{c_1,c_2\},\{d_1,d_2\}\}=\{\{p_3,p_5\},\{p_3,p_6\}\},
{\vcenter{\hbox{
\begin{tikzpicture}[scale=0.5]
\draw(6,2.3) node {};
\draw[-] (0,1)--(1,2);
\draw[-] (1,2)--(2,1);
\draw[-] (2,1)--(5,1);
\draw[-,red] (2,1)--(3,2);
\draw[-,red] (2,1)--(4,2);
\draw[-,line width=2pt,white] (3,2)--(4,1);
\draw[-,red] (3,2)--(4,1);
\draw[-,white,line width=2pt] (5,1)--(4,2);
\draw[-,red] (5,1)--(4,2);
\draw[-] (5,1)--(6,2);
\draw[-] (6,2)--(7,1);
\filldraw[red]  (3,2)    circle (2pt);
\filldraw[red]  (4,2)    circle (2pt);
\filldraw [black]  (0,1)    circle (2pt)
[black]  (1,2)    circle (2pt)
[black]  (2,1)    circle (2pt)
[black]  (3,1)    circle (2pt)
[black]  (4,1)    circle (2pt)
[black]  (5,1)    circle (2pt)
[black]  (6,2)    circle (2pt)
[black]  (7,1)    circle (2pt);
\draw (0,0.6) node {$p_1$};
\draw (0.7,2.3) node {$p_2$};
\draw (2,0.6) node {$p_3$};
\draw (3,0.6) node {$p_4$};
\draw (4,0.6) node {$p_{5}$};
\draw (5,0.6) node {$p_6$};
\draw (6,2.4) node {$p_7$};
\draw (7,0.6) node {$p_8$};
\draw (2.7,2.3) node {$c$};
\draw (4.4,2.2) node {$d$};
\draw (8.5,2.6) node {};
\end{tikzpicture}}}}
$
\item[$\bullet\bullet$] $
\{\{c_1,c_2\},\{d_1,d_2\}\}=\{\{p_3,p_5\},\{p_4,p_6\}\},
{\vcenter{\hbox{
\begin{tikzpicture}[scale=0.5]
\draw(6,2.3) node {};
\draw[-] (0,1)--(1,2);
\draw[-] (1,2)--(2,1);
\draw[-] (2,1)--(5,1);

\draw[-,red] (2,1)--(3,2);
\draw[-,red] (3,1)--(4,2);
\draw[-,line width=2pt,white] (3,2)--(4,1);
\draw[-,red] (3,2)--(4,1);
\draw[-,white,line width=2pt] (5,1)--(4,2);
\draw[-,red] (5,1)--(4,2);
\draw[-] (5,1)--(6,2);
\draw[-] (6,2)--(7,1);
\filldraw[red]  (3,2)    circle (2pt);
\filldraw[red]  (4,2)    circle (2pt);
\filldraw [black]  (0,1)    circle (2pt)
[black]  (1,2)    circle (2pt)
[black]  (2,1)    circle (2pt)
[black]  (3,1)    circle (2pt)
[black]  (4,1)    circle (2pt)
[black]  (5,1)    circle (2pt)
[black]  (6,2)    circle (2pt)
[black]  (7,1)    circle (2pt);
\draw (0,0.6) node {$p_1$};
\draw (0.7,2.3) node {$p_2$};
\draw (2,0.6) node {$p_3$};
\draw (3,0.6) node {$p_4$};
\draw (4,0.6) node {$p_{5}$};
\draw (5,0.6) node {$p_6$};
\draw (6,2.4) node {$p_7$};
\draw (7,0.6) node {$p_8$};
\draw (2.7,2.3) node {$c$};
\draw (4.4,2.2) node {$d$};
\draw (8.5,2.6) node {};
\end{tikzpicture}}}}
$
\item[$\bullet\bullet$] $
\{\{c_1,c_2\},\{d_1,d_2\}\}=\{\{p_3,p_6\},\{p_4,p_6\}\}.
{\vcenter{\hbox{
\begin{tikzpicture}[scale=0.5]
\draw(6,2.3) node {};
\draw[-] (0,1)--(1,2);
\draw[-] (1,2)--(2,1);
\draw[-] (2,1)--(5,1);
\draw[-,red] (2,1)--(3,2);
\draw[-,red] (3,1)--(4,2);
\draw[-,line width=2pt,white] (3,2)--(5,1);
\draw[-,red] (3,2)--(5,1);
\draw[-,white,line width=2pt] (5,1)--(4,2);
\draw[-,red] (5,1)--(4,2);
\draw[-] (5,1)--(6,2);
\draw[-] (6,2)--(7,1);
\filldraw[red]  (3,2)    circle (2pt);
\filldraw[red]  (4,2)    circle (2pt);
\filldraw [black]  (0,1)    circle (2pt)
[black]  (1,2)    circle (2pt)
[black]  (2,1)    circle (2pt)
[black]  (3,1)    circle (2pt)
[black]  (4,1)    circle (2pt)
[black]  (5,1)    circle (2pt)
[black]  (6,2)    circle (2pt)
[black]  (7,1)    circle (2pt);
\draw (0,0.6) node {$p_1$};
\draw (0.7,2.3) node {$p_2$};
\draw (2,0.6) node {$p_3$};
\draw (3,0.6) node {$p_4$};
\draw (4,0.6) node {$p_{5}$};
\draw (5,0.6) node {$p_6$};
\draw (6,2.4) node {$p_7$};
\draw (7,0.6) node {$p_8$};
\draw (2.7,2.3) node {$c$};
\draw (4.4,2.2) node {$d$};
\draw (8.5,2.6) node {};
\end{tikzpicture}}}}
$
\end{itemize}

In the first case $d$ must be connected with $c$ by an edge. In fact, it cannot be connected with $p_7$, since otherwise we could replace
$p_6p_7$ with $p_6dp_7$ in $P$ and obtain a longer path than $P$, and similarly, if it is connected with $p_2$, then we replace $p_2p_3$
by $p_2dp_3$ in $P$, obtaining a longer path. Since $P'\cup Q'=\{p_2,p_7,c,d\}$ is connected, it has to be connected with $c$. But then
the blue path $\widehat{P}$ in the second diagram is longer than $P$, which discards the case
$\{\{c_1,c_2\},\{d_1,d_2\}\}=\{\{p_3,p_5\},\{p_3,p_6\}\}$.

\begin{tikzpicture}[scale=0.5]
\draw(6,2.3) node {};
\draw[-] (0,1)--(1,2);
\draw[-] (1,2)--(2,1);
\draw[-] (2,1)--(5,1);
\draw[-,red] (2,1)--(3,2);
\draw[-,red] (2,1)--(4,2);
\draw[-,line width=2pt,white] (3,2)--(4,1);
\draw[-,red] (3,2)--(4,1);
\draw[-,line width=1.5pt,dotted] (3,2)--(4,2);
\draw[-,white,line width=2pt] (5,1)--(4,2);
\draw[-,red] (5,1)--(4,2);
\draw[-] (5,1)--(6,2);
\draw[-] (6,2)--(7,1);
\filldraw[red]  (3,2)    circle (2pt);
\filldraw[red]  (4,2)    circle (2pt);
\filldraw [black]  (0,1)    circle (2pt)
[black]  (1,2)    circle (2pt)
[black]  (2,1)    circle (2pt)
[black]  (3,1)    circle (2pt)
[black]  (4,1)    circle (2pt)
[black]  (5,1)    circle (2pt)
[black]  (6,2)    circle (2pt)
[black]  (7,1)    circle (2pt);
\draw (0,0.6) node {$p_1$};
\draw (0.7,2.3) node {$p_2$};
\draw (2,0.6) node {$p_3$};
\draw (3,0.6) node {$p_4$};
\draw (4,0.6) node {$p_{5}$};
\draw (5,0.6) node {$p_6$};
\draw (6,2.4) node {$p_7$};
\draw (7,0.6) node {$p_8$};
\draw (2.7,2.3) node {$c$};
\draw (4.4,2.2) node {$d$};
\draw (8.5,2.6) node {};
\end{tikzpicture}
\begin{tikzpicture}[scale=0.5]
\draw(6,2.3) node {};
\draw[-,line width=2pt,cyan] (0,1)--(1,2);
\draw[-,line width=2pt,cyan] (1,2)--(2,1);
\draw[-,line width=2pt,cyan] (2,1)--(4,1);
\draw[-] (4,1)--(5,1);
\draw[-,red] (2,1)--(3,2);
\draw[-,red] (2,1)--(4,2);
\draw[-,line width=4pt,white] (3,2)--(4,1);
\draw[-,line width=2pt,cyan] (3,2)--(4,1);
\draw[-,line width=2pt,cyan] (3,2)--(4,2);
\draw[-,cyan,line width=2pt] (5,1)--(4,2);
\draw[-,line width=2pt,cyan] (5,1)--(6,2);
\draw[-,line width=2pt,cyan] (6,2)--(7,1);
\filldraw[red]  (3,2)    circle (2pt);
\filldraw[red]  (4,2)    circle (2pt);
\filldraw [black]  (0,1)    circle (2pt)
[black]  (1,2)    circle (2pt)
[black]  (2,1)    circle (2pt)
[black]  (3,1)    circle (2pt)
[black]  (4,1)    circle (2pt)
[black]  (5,1)    circle (2pt)
[black]  (6,2)    circle (2pt)
[black]  (7,1)    circle (2pt);
\draw (0,0.6) node {$p_1$};
\draw (0.7,2.3) node {$p_2$};
\draw (2,0.6) node {$p_3$};
\draw (3,0.6) node {$p_4$};
\draw (4,0.6) node {$p_{5}$};
\draw (5,0.6) node {$p_6$};
\draw (6,2.4) node {$p_7$};
\draw (7,0.6) node {$p_8$};
\draw (2.7,2.3) node {$c$};
\draw (4.4,2.2) node {$d$};
\draw (8.5,2.6) node {};
\end{tikzpicture}

The second case is discarded by the path $\widehat{P}$ in the second diagram, since it is longer than $P$.

\begin{tikzpicture}[scale=0.5]
\draw(6,2.3) node {};
\draw[-] (0,1)--(1,2);
\draw[-] (1,2)--(2,1);
\draw[-] (2,1)--(5,1);
\draw[-,red] (2,1)--(3,2);
\draw[-,red] (3,1)--(4,2);
\draw[-,line width=2pt,white] (3,2)--(4,1);
\draw[-,red] (3,2)--(4,1);
\draw[-,white,line width=2pt] (5,1)--(4,2);
\draw[-,red] (5,1)--(4,2);
\draw[-] (5,1)--(6,2);
\draw[-] (6,2)--(7,1);
\filldraw[red]  (3,2)    circle (2pt);
\filldraw[red]  (4,2)    circle (2pt);
\filldraw [black]  (0,1)    circle (2pt)
[black]  (1,2)    circle (2pt)
[black]  (2,1)    circle (2pt)
[black]  (3,1)    circle (2pt)
[black]  (4,1)    circle (2pt)
[black]  (5,1)    circle (2pt)
[black]  (6,2)    circle (2pt)
[black]  (7,1)    circle (2pt);
\draw (0,0.6) node {$p_1$};
\draw (0.7,2.3) node {$p_2$};
\draw (2,0.6) node {$p_3$};
\draw (3,0.6) node {$p_4$};
\draw (4,0.6) node {$p_{5}$};
\draw (5,0.6) node {$p_6$};
\draw (6,2.4) node {$p_7$};
\draw (7,0.6) node {$p_8$};
\draw (2.7,2.3) node {$c$};
\draw (4.4,2.2) node {$d$};
\draw (8.5,2.6) node {};
\end{tikzpicture}
\begin{tikzpicture}[scale=0.5]
\draw(6,2.3) node {};
\draw[-,line width=2pt,cyan] (0,1)--(1,2);
\draw[-,line width=2pt,cyan] (1,2)--(2,1);
\draw[-,line width=2pt,cyan] (3,1)--(4,1);
\draw[-] (2,1)--(3,1);
\draw[-] (4,1)--(5,1);
\draw[-,line width=2pt,cyan] (2,1)--(3,2);
\draw[-,line width=2pt,cyan] (3,1)--(4,2);
\draw[-,line width=4pt,white] (3,2)--(4,1);
\draw[-,line width=2pt,cyan] (3,2)--(4,1);
\draw[-,white,line width=2pt] (5,1)--(4,2);
\draw[-,line width=2pt,cyan] (5,1)--(4,2);
\draw[-,line width=2pt,cyan] (5,1)--(6,2);
\draw[-,line width=2pt,cyan] (6,2)--(7,1);
\filldraw[red]  (3,2)    circle (2pt);
\filldraw[red]  (4,2)    circle (2pt);
\filldraw [black]  (0,1)    circle (2pt)
[black]  (1,2)    circle (2pt)
[black]  (2,1)    circle (2pt)
[black]  (3,1)    circle (2pt)
[black]  (4,1)    circle (2pt)
[black]  (5,1)    circle (2pt)
[black]  (6,2)    circle (2pt)
[black]  (7,1)    circle (2pt);
\draw (0,0.6) node {$p_1$};
\draw (0.7,2.3) node {$p_2$};
\draw (2,0.6) node {$p_3$};
\draw (3,0.6) node {$p_4$};
\draw (4,0.6) node {$p_{5}$};
\draw (5,0.6) node {$p_6$};
\draw (6,2.4) node {$p_7$};
\draw (7,0.6) node {$p_8$};
\draw (2.7,2.3) node {$c$};
\draw (4.4,2.2) node {$d$};
\draw (8.5,2.6) node {};
\end{tikzpicture}

The third case is symmetric to the first case, and so we have discarded the case $\{i,j\}=\{2,7\}$.

$\bullet$ If $\{i,j\}=\{3,5\}$, then $Q''\subset \{p_2,p_4,p_6,p_7\}$.
If one of $c_1,c_2$ is equal to $p_4$, then $c$ cannot be connected by an edge with $p_3$ nor with $p_5$, since then one could extend $P$.
So it has to be connected with $d$, but then none of $c$ or $d$ can be connected with $p_7$ nor with $p_2$, since we could
replace $p_8$ (respectively $p_1$) by the edge $cd$ in $P$, obtaining a longer path. This implies that $Q''\subset \{p_4,p_6\}$ which is impossible,
since $\{c_1,c_2\}\ne \{d_1,d_2\}$.

 Thus $p_4\notin \{c_1,c_2\}$ and by the same argument, interchanging $c$ with $d$, we
obtain $p_4\notin \{d_1,d_2\}$.

Hence $Q''=\{c_1,c_2,d_1,d_2\}\subset\{p_2,p_6,p_7\}$ and there is only one possibility left:
$$
\{ \{c_1,c_2\},\{d_1,d_2\}\}=\{ \{p_2,p_6\},\{p_2,p_7\}\}.
$$
The blue path $\widehat{P}$ in the second diagram, which is longer than $P$,
shows that this is not possible, and so we have discarded the case $\{i,j\}=\{3,5\}$.

\begin{tikzpicture}[scale=0.5]
\draw(6,2.3) node {};
\draw[-] (0,1)--(1,1);
\draw[-] (5,1)--(7,1);
\draw[-,red] (1,1)--(1,2);
\draw[-,red] (5,1)--(1,2);
\draw[-,line width=2pt,white] (1,1)--(6,2);
\draw[-,red] (1,1)--(6,2);
\draw[-,white,line width=2pt] (6,1)--(6,2);
\draw[-,red] (6,1)--(6,2);
\draw[-,white,line width=2pt] (1,1)--(2,2);
\draw[-,white,line width=2pt] (2,2)--(3,1);
\draw[-,white,line width=2pt] (3,1)--(4,2);
\draw[-,white,line width=2pt] (4,2)--(5,1);
\draw[-] (1,1)--(2,2);
\draw[-] (2,2)--(3,1);
\draw[-] (3,1)--(4,2);
\draw[-] (4,2)--(5,1);
\filldraw[red]  (1,2)    circle (2pt);
\filldraw[red]  (6,2)    circle (2pt);
\filldraw [black]  (0,1)    circle (2pt)
[black]  (1,1)    circle (2pt)
[black]  (2,2)    circle (2pt)
[black]  (3,1)    circle (2pt)
[black]  (4,2)    circle (2pt)
[black]  (5,1)    circle (2pt)
[black]  (6,1)    circle (2pt)
[black]  (7,1)    circle (2pt);
\draw (0,0.6) node {$p_1$};
\draw (1,0.6) node {$p_2$};
\draw (2.4,2.3) node {$p_3$};
\draw (3,0.6) node {$p_4$};
\draw (4.4,2.4) node {$p_{5}$};
\draw (5,0.6) node {$p_6$};
\draw (6,0.6) node {$p_7$};
\draw (7,0.6) node {$p_8$};
\draw (0.7,2.3) node {$c$};
\draw (6.4,2.2) node {$d$};
\draw (8.5,2.6) node {};
\end{tikzpicture}
\begin{tikzpicture}[scale=0.5]
\draw(6,2.3) node {};
\draw[-] (0,1)--(1,1);
\draw[-] (5,1)--(6,1);
\draw[-,line width=2pt,cyan] (6,1)--(7,1);
\draw[-,line width=2pt,cyan] (1,1)--(1,2);
\draw[-,line width=2pt,cyan] (5,1)--(1,2);
\draw[-,line width=4pt,white] (1,1)--(6,2);
\draw[-,line width=2pt,cyan] (1,1)--(6,2);
\draw[-,white,line width=4pt] (6,1)--(6,2);
\draw[-,line width=2pt,cyan] (6,1)--(6,2);
\draw[-,white,line width=2pt] (1,1)--(2,2);
\draw[-,white,line width=4pt] (2,2)--(3,1);
\draw[-,white,line width=4pt] (3,1)--(4,2);
\draw[-,white,line width=4pt] (4,2)--(5,1);
\draw[-] (1,1)--(2,2);
\draw[-,line width=2pt,cyan] (2,2)--(3,1);
\draw[-,line width=2pt,cyan] (3,1)--(4,2);
\draw[-,line width=2pt,cyan] (4,2)--(5,1);
\filldraw[red]  (1,2)    circle (2pt);
\filldraw[red]  (6,2)    circle (2pt);
\filldraw [black]  (0,1)    circle (2pt)
[black]  (1,1)    circle (2pt)
[black]  (2,2)    circle (2pt)
[black]  (3,1)    circle (2pt)
[black]  (4,2)    circle (2pt)
[black]  (5,1)    circle (2pt)
[black]  (6,1)    circle (2pt)
[black]  (7,1)    circle (2pt);
\draw (0,0.6) node {$p_1$};
\draw (1,0.6) node {$p_2$};
\draw (2.4,2.3) node {$p_3$};
\draw (3,0.6) node {$p_4$};
\draw (4.4,2.4) node {$p_{5}$};
\draw (5,0.6) node {$p_6$};
\draw (6,0.6) node {$p_7$};
\draw (7,0.6) node {$p_8$};
\draw (0.7,2.3) node {$c$};
\draw (6.4,2.2) node {$d$};
\draw (8.5,2.6) node {};
\end{tikzpicture}

$\bullet$ If $\{i,j\}=\{3,6\}$, then $Q''\subset \{p_2,p_4,p_5,p_7\}$. We claim that $c$ and $d$ cannot be connected by an edge. In fact, if they are
connected, then none of the two can be connected with $p_2$, since we could replace $p_1$ by $cd$ in $P$ and obtain a longer path, and similarly
none of both can be connected with $p_7$. Hence $Q''\subset \{p_4,p_5\}$, which is impossible and proves the claim.

If $c$ is connected with $p_3$, then $\{c_1,c_2\}=\{p_5,p_7\}$, since a connection of $c$ with $p_2$ leads to a path longer than $P$,
replacing $p_2p_3$ by $p_2cp_3$ in $P$, and if $c$ is connected with $p_4$, then replacing $p_3p_4$ by $p_3cp_4$ in $P$ yields a path longer than $P$.

Similarly,
\begin{itemize}
  \item[-] If $d$ is connected with $p_3$, then $\{d_1,d_2\}=\{p_5,p_7\}$,
  \item[-] If $c$ is connected with $p_5$, then $\{c_1,c_2\}=\{p_2,p_4\}$,
  \item[-] If $d$ is connected with $p_5$, then $\{d_1,d_2\}=\{p_2,p_4\}$.
\end{itemize}

Since $\{c_1,c_2\}\ne \{d_1,d_2\}$, one of $c,d$ is connected with $p_3$ and the other with $p_5$, and we have
$$
\{\{c_1,c_2\}, \{d_1,d_2\}\}=\{\{p_2,p_4\}, \{p_5,p_7\}\}.
$$
The blue path $\widehat{P}$ in the second diagram, which is longer than $P$,
shows that this is not possible.

\begin{tikzpicture}[scale=0.5]
\draw(6,2.3) node {};
\draw[-] (0,1)--(1,1);
\draw[-] (3,1)--(4,1);
\draw[-] (6,1)--(7,1);
\draw[-,red] (1,1)--(3,2);
\draw[-,red] (3,1)--(3,2);
\draw[-,line width=2pt,white] (4,1)--(6,2);
\draw[-,red] (4,1)--(6,2);
\draw[-,white,line width=2pt] (6,1)--(6,2);
\draw[-,red] (6,1)--(6,2);
\draw[-,white,line width=2pt] (1,1)--(2,2);
\draw[-,white,line width=2pt] (2,2)--(3,1);
\draw[-,white,line width=2pt] (4,1)--(5,2);
\draw[-,white,line width=2pt] (5,2)--(6,1);
\draw[-] (1,1)--(2,2);
\draw[-] (2,2)--(3,1);
\draw[-] (4,1)--(5,2);
\draw[-] (5,2)--(6,1);
\draw[-,dotted,line width=2pt] (3,2)--(5,2);
\draw[-,dotted,line width=2pt] (2,2)..controls (2,3.2)and(6,3.2)..(6,2);
\filldraw[red]  (3,2)    circle (2pt);
\filldraw[red]  (6,2)    circle (2pt);
\filldraw [black]  (0,1)    circle (2pt)
[black]  (1,1)    circle (2pt)
[black]  (2,2)    circle (2pt)
[black]  (3,1)    circle (2pt)
[black]  (4,1)    circle (2pt)
[black]  (5,2)    circle (2pt)
[black]  (6,1)    circle (2pt)
[black]  (7,1)    circle (2pt);
\draw (0,0.6) node {$p_1$};
\draw (1,0.6) node {$p_2$};
\draw (1.6,2.3) node {$p_3$};
\draw (3,0.6) node {$p_4$};
\draw (4,0.6) node {$p_{5}$};
\draw (4.8,2.4) node {$p_6$};
\draw (6,0.6) node {$p_7$};
\draw (7,0.6) node {$p_8$};
\draw (2.9,2.3) node {$c$};
\draw (6.3,2.4) node {$d$};
\draw (8.5,2.6) node {};
\end{tikzpicture}
\begin{tikzpicture}[scale=0.5]
\draw(6,2.3) node {};
\draw[-,cyan,line width=2pt] (0,1)--(1,1);
\draw[-,cyan,line width=2pt] (3,1)--(4,1);
\draw[-] (6,1)--(7,1);
\draw[-,red] (1,1)--(3,2);
\draw[-,cyan,line width=2pt] (3,1)--(3,2);
\draw[-,line width=2pt,white] (4,1)--(6,2);
\draw[-,red] (4,1)--(6,2);
\draw[-,white,line width=4pt] (6,1)--(6,2);
\draw[-,cyan,line width=2pt] (6,1)--(6,2);
\draw[-,white,line width=4pt] (1,1)--(2,2);
\draw[-,white,line width=2pt] (2,2)--(3,1);
\draw[-,white,line width=4pt] (4,1)--(5,2);
\draw[-,white,line width=4pt] (5,2)--(6,1);
\draw[-,cyan,line width=2pt] (1,1)--(2,2);
\draw[-] (2,2)--(3,1);
\draw[-,cyan,line width=2pt] (4,1)--(5,2);
\draw[-,cyan,line width=2pt] (5,2)--(6,1);
\draw[-,dotted,line width=2pt] (3,2)--(5,2);
\draw[-,cyan,line width=2pt] (2,2)..controls (2,3.2)and(6,3.2)..(6,2);
\filldraw[red]  (3,2)    circle (2pt);
\filldraw[red]  (6,2)    circle (2pt);
\filldraw [black]  (0,1)    circle (2pt)
[black]  (1,1)    circle (2pt)
[black]  (2,2)    circle (2pt)
[black]  (3,1)    circle (2pt)
[black]  (4,1)    circle (2pt)
[black]  (5,2)    circle (2pt)
[black]  (6,1)    circle (2pt)
[black]  (7,1)    circle (2pt);
\draw (0,0.6) node {$p_1$};
\draw (1,0.6) node {$p_2$};
\draw (1.6,2.3) node {$p_3$};
\draw (3,0.6) node {$p_4$};
\draw (4,0.6) node {$p_{5}$};
\draw (4.8,2.4) node {$p_6$};
\draw (6,0.6) node {$p_7$};
\draw (7,0.6) node {$p_8$};
\draw (2.9,2.3) node {$c$};
\draw (6.3,2.4) node {$d$};
\draw (8.5,2.6) node {};
\end{tikzpicture}

Thus we have discarded the possibility $\{i,j\}=\{3,6\}$, which finishes Case 3. and concludes the proof of the
theorem.
\end{proof}

\begin{corollary}
  Assume that $P$ and $Q$ are two longest paths in a simple graph $G$. If $V(Q)\ne V(P)$ and $n=|V(G)|\le 10$ then $V(Q)\cap V(P)$ is a separator.
  Moreover, there is a graph $G$ with $n=|V(G)|= 11$ and two longest paths $P$ and $Q$ with $V(Q)\ne V(P)$, such that $V(Q)\cap V(P)$ is not a 
  separator. Consequently, $n=11$ is the minimal $n$ for which such a graph exists. 
\end{corollary}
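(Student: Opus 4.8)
The plan is to derive the first assertion by contradiction, assembling the cases already settled in Sections~\ref{seccion P' =1} and~\ref{seccion 6 mas 4} together with the cited bound of~\cite{GV}. So suppose $V(P)\ne V(Q)$, $n=|V(G)|\le 10$, and $V(P)\cap V(Q)$ is \emph{not} a separator, i.e.\ $G\setminus(V(P)\cap V(Q))$ is connected; I aim for a contradiction. Write $\ell=|V(P)\cap V(Q)|$. By~\cite{GV}*{Theorem 5.7} the intersection is a separator whenever $\ell\le 5$, so we may assume $\ell\ge 6$. Since $P$ and $Q$ are longest paths they have equally many vertices, whence $m:=\#(V(P)\setminus V(Q))=\#(V(Q)\setminus V(P))\ge 1$ and $n\ge|V(P)\cup V(Q)|=\ell+2m$.

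The next step is a short arithmetic count of the admissible pairs $(\ell,m)$. From $\ell\ge 6$ and $\ell+2m\le n\le 10$ one reads off $\ell\le 8$ (as $\ell\ge 9$ would force $n\ge 11$) and $m\le 2$, with $m=2$ possible only for $\ell=6$. Hence $(\ell,m)$ lies in $\{(6,1),(7,1),(8,1),(6,2)\}$, and these four possibilities are exactly what the body of the paper excludes.

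For the three pairs with $m=1$ I would invoke the reduction set up in the introduction: deleting $V_0=V(G)\setminus(V(P)\cup V(Q))$ with its incident edges and adding the edge $p_0q_0$ yields a graph $G_1$ in which $P$ and $Q$ are still longest paths, $V(P)\ne V(Q)$, the set $G_1\setminus(V(P)\cap V(Q))$ is still connected, and $n(G_1)=\ell+2$. Thus each of $(6,1),(7,1),(8,1)$ is transported into the regime $n=\ell+2$ and contradicted by the matching subsection of Section~\ref{seccion P' =1} (the cases $\ell=6,n=8$; $\ell=7,n=9$; $\ell=8,n=10$). The last pair $(6,2)$ forces $n=\ell+2m=10$ with $V_0=\emptyset$, i.e.\ it is precisely the situation ruled out by Theorem~\ref{teorema principal}. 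Either way we reach a contradiction, proving that $V(P)\cap V(Q)$ is a separator for every $n\le 10$.

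The second assertion needs no fresh work: the explicit $11$-vertex graph displayed in the introduction carries two longest paths $P,Q$ of length $9$ with $V(P)\ne V(Q)$ and with $G\setminus(V(P)\cap V(Q))$ connected, so there $V(P)\cap V(Q)$ is not a separator. Combining the two assertions gives that $n=11$ is the least order admitting such a graph. I expect the only point requiring care to be the justification that the reduction to $G_1$ preserves all four hypotheses simultaneously --- above all that $P$ and $Q$ remain \emph{longest} paths once the vertices of $V_0$ are deleted --- but this verification is already carried out in the introduction and here only has to be quoted.
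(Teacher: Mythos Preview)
Your proposal is correct and follows essentially the same route as the paper: the paper's proof simply says that, by the discussion in the introduction, it suffices to discard the four cases $(\ell,n)\in\{(6,8),(7,9),(8,10),(6,10)\}$, then cites Section~\ref{seccion P' =1} and Theorem~\ref{teorema principal}, and appeals to the explicit $11$-vertex example. You have unpacked that discussion---the bound $\ell\ge 6$ from~\cite{GV}, the arithmetic on $(\ell,m)$, and the reduction to $G_1$ when $m=1$---more explicitly than the paper does, but the logical skeleton is identical.
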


\begin{proof}
  By the discussion in the introduction it suffices to discard the cases 
  \begin{itemize}
  \item $\ell=6$, $n=8$,
  \item $\ell=7$, $n=9$,
  \item $\ell=8$, $n=10$,
  \item $\ell=6$, $n=10$.
\end{itemize}
The first three cases are discarded in section~\ref{seccion P' =1}, whereas the last case is discarded in Theorem~\ref{teorema principal}. 
The statements about the example given in the introduction can be verified directly. 
\end{proof}

\begin{bibdiv}
	\begin{biblist}

\bib{dR}{article}{
   author={de Rezende, Susanna F.},
   author={Fernandes, Cristina G.},
   author={Martin, Daniel M.},
   author={Wakabayashi, Yoshiko},
   title={Intersecting longest paths},
   journal={Discrete Math.},
   volume={313},
   date={2013},
   number={12},
   pages={1401--1408},
   issn={0012-365X},
   review={\MR{3061125}},
   doi={10.1016/j.disc.2013.02.016},
}

\bib{Gr}{article}{
   author={Gr\"{o}tschel, Martin},
   title={On intersections of longest cycles},
   conference={
      title={Graph theory and combinatorics},
      address={Cambridge},
      date={1983},
   },
   book={
      publisher={Academic Press, London},
   },
   date={1984},
   pages={171--189},
   review={\MR{777174}},
}
	
\bib{GV}{article}{
   author={Guti\'errez, Juan},
   author={Valqui, Christian},
   title={Bi-traceable graphs, the intersection of three longest paths and Hippchen's conjecture},
   journal={arXiv:2101.07859 [math.CO]},
   date={2021},
}

\bib{H}{thesis}{
title={Intersections of Longest Paths and Cycles},
author={Hippchen, Thomas},
school={Master thesis, Georgia State University},
year={2008},
}

\bib{S}{article}{
   author={Schmitz, Werner},
   title={\"{U}ber l\"{a}ngste Wege und Kreise in Graphen},
   journal={Rend. Sem. Mat. Univ. Padova},
   volume={53},
   date={1975},
   pages={97--103},
   issn={0041-8994},
   review={\MR{427139}},
}

\bib{SZZ}{article}{
   author={Shabbir, Ayesha},
   author={Zamfirescu, Carol T.},
   author={Zamfirescu, Tudor I.},
   title={Intersecting longest paths and longest cycles: a survey},
   journal={Electron. J. Graph Theory Appl. (EJGTA)},
   volume={1},
   date={2013},
   number={1},
   pages={56--76},
   review={\MR{3093252}},
   doi={10.5614/ejgta.2013.1.1.6},
}
	
\bib{ST}{article}{
   author={Stewart, Iain A.},
   author={Thompson, Ben},
   title={On the intersections of longest cycles in a graph},
   journal={Experiment. Math.},
   volume={4},
   date={1995},
   number={1},
   pages={41--48},
   issn={1058-6458},
   review={\MR{1359416}},
}	
		
\end{biblist}
\end{bibdiv}

\end{document}